\tikzset{
  symbol/.style={
    draw=none,
    every to/.append style={
      edge node={node [sloped, allow upside down, auto=false]{$#1$}}}
  }}
\tikzset{nodes={inner sep=2pt}}
\newtheorem{theorem}{Theorem}[section]
\newtheorem{proposition}[theorem]{Proposition}
\newtheorem{lemma}[theorem]{Lemma}
\newtheorem{definition}[theorem]{Definition}
\newtheorem{corollary}[theorem]{Corollary}
\newcommand{\gj}{\blacksquare}	\newcommand{\gl}{\lambda}
\newcommand{\gm}{\gamma}	
\newcommand{\gt}{\theta}
	\newcommand{\BN}{\mbox{$\mathbb N$}}
	\newcommand{\BR}{\mbox{$\mathbb R$}}
	\newcommand{\BZ}{\mbox{$\mathbb Z$}}
\newcommand{\CC}{\mbox{$\mathcal C$}}
\newcommand{\CI}{\mbox{$\mathcal I$}}
	\newcommand{\p}{\mbox{$\mathfrak p$}}
	\newcommand{\op}{\mbox{$\oplus$}}
\newcommand{\Spec}{\mbox{\rm Spec\,}}
\newcommand {\Pic}{{\rm Pic }}
\newcommand {\Nil}{{\rm Nil }}
\newcommand {\IIm}{{\rm Im }}
\newcommand {\coker}{{\rm coker }}
\newcommand {\Id}{{\rm Id }}
\newcommand {\U}{{\rm U}}
\newcommand {\cl}{{\rm cl}}
\newcommand {\gp}{{\rm gp}}
\newcommand {\QF}{{\rm QF}}
\newcommand {\PQF}{{\rm PQF}}
\newcommand {\Rad}{{\rm Rad}}
\newcommand{\tens}[1]{%
  \mathbin{\mathop{\otimes}\limits_{#1}}%
}
\begin{document}


\begin{center}
 {\Large \bf Subintegrality and ideal class groups of monoid algebras}\\\vspace{.2in}
 {\large Md Abu Raihan,  Leslie G. Roberts 
 and Husney Parvez Sarwar }\\\vspace{.1in}
\end{center}
\begin{abstract}
$(1)$ Let $M\subset N$ be a commutative cancellative torsion-free and subintegral extension of monoids. Then we prove
that in the case of ring extension $A[M]\subset A[N]$, the two notions, subintegral and weakly subintegral coincide provided $\mathbb{Z}\subset A$.
$(2)$ Let $A \subset B$ be an extension of commutative rings and $M\subset N$ an extension of commutative cancellative torsion-free positive monoids. Let $I$ be a radical ideal in $N$. Then  $\frac{A[M]}{(I\cap M)A[M]}$ is subintegrally closed in $\frac{B[N]}{IB[N]}$ if and only if the group of invertible $A$-submodules of $B$ is isomorphic to the group of invertible $\frac{A[M]}{(I\cap M)A[M]}$-submodules of  $\frac{B[N]}{IB[N]}$.
\end{abstract}

\textbf{Keywords:} Monoid algebras; monoid discrete Hodge algebras; subintegral extension;  invertible modules; Picard groups; Milnor square in the ring extension.\\
\textbf{Mathematics Subject Classification 2020:} 
13F15,
13F50,
13B02,
19D45.

\tableofcontents

\section{Introduction}
{\it Unless otherwise stated, all rings are commutative with the multiplicative identity $1$ and also all monoids are commutative cancellative torsion-free with the identity $1$.}
Monoids appear naturally in the theory
of affine toric varieties where one considers an affine cone in a $d$-dimensional real vector space and then intersects with $d$-dimensional lattice. More generally, one considers a monoid $M$ generated by monomials in the free monoid $\mathbb{Z}^n\oplus \mathbb{Z}^m_+$ (in the additive notation). We are interested in studying how far the K-theoretical properties of the Laurent-polynomial algebra $A[\mathbb{Z}^n\oplus \mathbb{Z}^m_+]$ can be extended to the monoid algebra $A[M]$, where $A$ is a ring. It was conceived by Anderson \cite{An78,An88} and then successfully developed by Gubeladze in a series of excellent papers \cite{Gu89,Gu05,Gu18}. In this topic, using CDH techniques, Cortin\~as--Haesemeyer--Walker--Weibel \cite{CHMWW2012,CHMWW2018} resolved a conjecture of Gubeladze in characteristic $p>0$ and in the mixed characteristics. An old problem in the $K$-theory of monoid algebras is solved by Krishna--Sarwar \cite{KrSa17} using pro-descent and pro-calculations. Following Anderson and Gubeladze, the third author had studied a similar kind of problems
on $K$-theory of monoid algebras in \cite{Sa16,Sa21} and in \cite{KeSa15} with Keshari. The second author with Reid studied 
subintegrality and weak subintegrality of some monomial subrings 
of $\mathbb{Z}[t_1,\ldots,t_n]$ in \cite{RR00,RR01}.
In the literature, many algebro-geometric properties of monoid
algebras have been studied in the name of semigroup rings.
For a comprehensive idea about the same, we refer the reader to two beautiful books \cite{CLS2011},\cite{B-G}.

In this paper also, we study some $K$-theoretical properties of an extension of monoid algebras. Subintegrality is intricately linked with the Picard group and invertible modules. To demonstrate this, let us recall a result of Dayton.  Let $A$ be a reduced graded algebra over a field $k$ and $B$ its seminormalization (for the definition, see Definition \ref{Weakly subintegral extensions}$(\ref{RingExtn:Item3})$). 
Then Dayton \cite{Dayton} proved that there is a functorial map $\Pic(A)\longrightarrow B/A$, which is an isomorphism if char(k)=0.
A generalization of the above result is provided by Roberts and Singh \cite{Roberts and Singh} for a subintegral extension of commutative rings (for the definition, see Definition \ref{Weakly subintegral extensions}$(\ref{RingExtn:Item3})$).
More precisely, they prove the following. Let   $A\subset B$ be the subintegral extension of commutative rings such that $\mathbb{Q}\subset A$. Then there is a functorial morphism $B/A\longrightarrow \mathcal{I}(A,B)$, which is an isomorphism,
where $\mathcal{I}(A,B)$ is the group of invertible $A$-submodules of $B$. 
Subintegrality often splits into two concepts, subintegrality
and weak subintegrality. 
The latter was defined by Yanagihara \cite{Yanagihara1985} (for the definition, see Definition (\ref{Weakly subintegral extensions})) (see also \cite{Reid Roberts Singhweak}). It turns out that both the definitions, subintegral extension and weak subintegral extension, coincide for an extension of $\mathbb{Q}$-algebras. 
In section \ref{On monoid subintegral extension}, we prove the following result which says in the case of monoid extension, the two notions coincide.

\begin{theorem}(Theorem $\ref{twkly1}$)
Let $M\subset N$ be an extension of monoids and $A$ a ring.
Then the following are equivalent if $\BZ \subset A $.

$(1)$ The extension $A[M]\subset A[N]$ is subintegral.

$(2)$ The extension $A[M]\subset A[N]$ is  weakly subintegral.

$(3)$ The monoid extension $M\subset N$ is subintegral.
\end{theorem}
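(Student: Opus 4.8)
The plan is to prove the cycle of implications $(3)\Rightarrow(1)\Rightarrow(2)\Rightarrow(3)$, the middle one being essentially formal. First I would dispose of $(1)\Rightarrow(2)$: every subintegral extension is weakly subintegral by definition (a subintegral extension is an integral extension which is both a bijection on prime spectra and induces isomorphisms on residue fields; weak subintegrality only relaxes the residue-field condition to purely inseparable extensions), so this holds over any ring $A$ and needs no hypothesis on $\BZ$. The substance is in the other two implications, where I expect the hypothesis $\BZ\subset A$ to be used decisively.

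For $(3)\Rightarrow(1)$: recall (presumably from a lemma earlier in Section \ref{On monoid subintegral extension}, or by a direct argument) that a subintegral extension of monoids $M\subset N$ can be built up as a filtered union of ``elementary'' subintegral steps, i.e.\ one reduces to the case $N = M\cup\{x\}$ where $nx\in M$ for all $n\gg 0$ and $2x, 3x\in M$ (this is the monoid analogue of the generation of subintegral ring extensions by elements $b$ with $b^2,b^3\in A$). Passing to monoid algebras, $A[N] = A[M][t]$ where $t$ corresponds to $x$ and $t^2 = a_2$, $t^3 = a_3$ lie in $A[M]$; one checks $a_2^3 = a_3^2$, so $t$ is a root of $T^2 - a_2$ and the extension is visibly integral. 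To see it is subintegral, I would verify the spectrum and residue-field conditions: since $t$ satisfies $t^3 = a_2 t$ one gets that for any prime $\mathfrak q$ of $A[N]$ lying over $\mathfrak p \subset A[M]$, the image of $t$ in $\kappa(\mathfrak q)$ is determined (it is the unique square root of $\bar a_2$ that is also a cube root of $\bar a_3$, using $2$ and $3$ invertible to interpolate $t$ as an $A[M]$-combination of $t^2=a_2$ and $t^3=a_3$ — indeed $t = \tfrac{1}{?}(\dots)$ only modulo nilpotents, but modulo a prime one can solve for $\bar t$), so $\kappa(\mathfrak q)=\kappa(\mathfrak p)$ and $\mathfrak q$ is unique; surjectivity of $\Spec A[N]\to\Spec A[M]$ is automatic from integrality. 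Since an arbitrary subintegral monoid extension is the union of such steps, and subintegrality of ring extensions is preserved under filtered unions, $(3)\Rightarrow(1)$ follows.

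For $(2)\Rightarrow(3)$: suppose $A[M]\subset A[N]$ is weakly subintegral. First, restricting scalars, the extension is integral, hence every element of $N$ is integral over $A[M]$; a standard grading/monoid argument (comparing monomials) then forces $N$ to be integral over $M$ as monoids, i.e.\ for each $x\in N$ some positive multiple $nx$ lies in $M$. It remains to upgrade ``integral'' to ``subintegral'' for the monoid extension, i.e.\ to rule out the situation where $M\subset N$ is integral but the conductor misses some prime or the residue extension is a nontrivial (possibly inseparable) field extension. Here is where $\BZ\subset A$ enters: weak subintegrality of $A[M]\subset A[N]$ means the residue-field extensions $\kappa(\mathfrak p)\subset\kappa(\mathfrak q)$ are purely inseparable; but since $\BQ\subset A$, the residue fields have characteristic $0$, so ``purely inseparable'' means ``trivial'', and weak subintegrality collapses to subintegrality for the ring extension. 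Now from subintegrality of $A[M]\subset A[N]$ I would extract subintegrality of $M\subset N$ by base-changing along a map $A\to k$ to a field (any residue field), reducing to $k[M]\subset k[N]$ with $k$ of characteristic $0$, and then arguing at the level of the torsion-free abelian groups $\gp(M)\subset\gp(N)$ and the monoid structure: a prime of $k[M]$ coming from a face / a character of $\gp(M)$ must have a unique extension with trivial residue extension, and translating this back through the combinatorics of monomials shows the group $\gp(N)/\gp(M)$ has no torsion prime to char $=0$, i.e.\ is trivial on the relevant part, and that the extension has no ``splitting'' — which is exactly the defining property of a subintegral monoid extension.

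The main obstacle I anticipate is the last step, $(2)\Rightarrow(3)$: translating the subintegrality (Spec-bijectivity plus trivial residue extensions) of the ring extension $A[M]\subset A[N]$ back into the purely combinatorial statement that $M\subset N$ is subintegral as monoids. The forward translation (monoid-subintegral $\Rightarrow$ ring-subintegral) is clean because one has an explicit filtration, but the reverse requires showing that no ``hidden'' phenomenon at the ring level — e.g.\ a non-reduced structure masking a genuine extension, or torsion in $\gp(N)/\gp(M)$ that becomes invisible after base change — can occur; carefully choosing the right base-change field and the right prime, and exploiting characteristic $0$ to kill inseparability and torsion, is where the real work lies. The hypothesis $\BZ\subset A$ (hence $\BQ\subset A$, since $A$ is a ring containing $\BZ$ as a subring means... actually one only gets $\BZ\subset A$, so I must be careful: $\BZ\subset A$ gives $A$ torsion-free as a $\BZ$-module but not necessarily $\BQ\subset A$; the relevant consequence is rather that $2$ and $3$, and more generally all positive integers $n$ appearing, are nonzerodivisors, which is what makes the interpolation $nx\in M$ usable and what rules out characteristic-$p$ Frobenius phenomena) is used precisely to make both of these translations reversible.
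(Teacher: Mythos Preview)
Your argument for $(2)\Rightarrow(3)$ has a genuine gap. The central move---``since $\BQ\subset A$, the residue fields have characteristic $0$, so purely inseparable means trivial, and weak subintegrality collapses to subintegrality''---rests on a false premise: $\BZ\subset A$ does \emph{not} force $\BQ\subset A$ (take $A=\BZ$ itself, whose residue fields are the $\BF_p$). You notice this at the end and retreat to ``positive integers are nonzerodivisors in $A$'', which is the correct consequence of $\BZ\subset A$, but you then have no argument: your whole strategy was to pass through residue fields of characteristic $0$, and that route is simply unavailable. The vague backup plan (base-change to a field, look at $\gp(N)/\gp(M)$, rule out torsion) does not recover, because after base-change to any $\BF_p$ you are back in positive characteristic where weak subintegrality genuinely differs from subintegrality---indeed Example~\ref{counter-example} shows the theorem fails over $\BF_p$.

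The paper's proof of $(2)\Rightarrow(3)$ is entirely different and uses the hypothesis $\BZ\subset A$ in the way you gesture at but never exploit. One takes a single $z\in N$ and applies the element-wise Reid--Roberts--Singh criterion for weak subintegrality (Theorem~\ref{Weakly Subintegrality Criterion}): there exist $c_1,\dots,c_p\in A[N]$ with $z^n+\sum_i\binom{n}{i}c_i z^{n-i}\in A[M]$ for all $n\gg 0$. Reading off the coefficient of $z^n$ in the free $A$-module $A[N]$ reduces this to $z^n\bigl(1+\sum_i\binom{n}{i}c_i'\bigr)\in A[M]$ with $c_i'\in A$. The key combinatorial step is Lemma~\ref{Lemma3}: any $(p{+}1)\times(p{+}1)$ matrix $\bigl(\binom{a_i}{j}\bigr)$ with distinct $a_i$ has determinant a \emph{positive integer}, hence a nonzerodivisor in $A$ precisely because $\BZ\subset A$. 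This forces $1+\sum_i\binom{n}{i}c_i'$ to vanish for at most $p$ values of $n$, so $z^n\in M$ for all $n\gg 0$, i.e.\ $M\subset N$ is subintegral (Proposition~\ref{Prop8}). This is the missing idea.

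For $(3)\Rightarrow(1)$ you are making it much harder than necessary: an elementary subintegral monoid extension $N=M\cup xM$ with $x^2,x^3\in M$ gives $A[N]=A[M][x]$ with $x^2,x^3\in A[M]$, which is by definition an elementary subintegral ring extension; now take filtered unions. No analysis of $\Spec$ or residue fields is needed, and no invertibility of $2$ or $3$.
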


The above theorem does not hold true if $A$ is an $\mathbb{F}_{p}$-algebras (see Example \ref{counter-example}). If
 $N$ is not a monoid, then also the above result is not true even though $A$ is a $\mathbb{Z}$-algebras (see Example \ref{counter-z}).

In another direction, Swan \cite{Swan1980} proved the homotopy invariance property for the Picard group. More precisely, he proved:  $A$ is a seminormal ring if and only if $\Pic(A)\cong \Pic(A[X])$.
Motivated by the result of Swan \cite{Swan1980}, Sadhu and Singh \cite{Sadhu and Singh} proved that $A$ is subintegrally closed in $B$ if and only if
$\mathcal{I}(A, B)\cong \mathcal{I}(A[X], B[X])$. Motivated by the results of Sadhu--Singh \cite{Sadhu and Singh} and Anderson \cite{An88},
Sarwar \cite{Sarwar} proved the following result. 

  Let $A\subset B$ be an extension of rings and $M \subset N$ an extension of positive monoids.
  Assume $N$ is affine (for definition, see Definition \ref{sec-2:monoid}(\ref{Monoid-Item5})).
 Then $A[M]$ is subintegrally closed in $B[N]$  if and only if $\mathcal{I}(A, B) \cong \mathcal{I}(A[M], B[N])$.

We prove the following result, and if we set $I = \emptyset $ in the monoid $N$
so that $IB[N] = 0$, then it becomes \cite[Theorem 1.2]{Sarwar} (see Theorem \ref{Sarwar-Main-Thm}). Thus it is a generalization of \cite[Theorem 1.2]{Sarwar}.

\begin{theorem}(Theorem $\ref{MainTheorem-1}$)
\label{Main-Thm-Intro}
Let $A\subset B$ be an extension of rings and $M\subset N$ an extension of positive monoids. Let $I$ be a radical ideal in $N$.

  \begin{enumerate}
  \item[(a)]
   If $\frac{A[M]}{(I\cap M)A[M]}$ is subintegrally closed in $\frac{B[N]}{IB[N]}$ and $N$ is affine, then $\mathcal{I}(A, B)\cong \mathcal{I}\left(\frac{A[M]}{(I\cap M)A[M]}, \frac{B[N]}{IB[N]}\right)$.
   
   \item[(b)]
   If $B$ is reduced, $A$ is subintegrally closed in $B$, and $M$ is subintegrally closed in $N$, then $\mathcal{I}(A, B)\cong \mathcal{I}\left(\frac{A[M]}{(I\cap M)A[M]}, \frac{B[N]}{IB[N]}\right)$.

   \item[(c)]
   If $M=N$, and $A$ is subintegrally closed in $B$, then $\mathcal{I}(A, B)\cong \mathcal{I}\left(\frac{A[M]}{IA[M]}, \frac{B[M]}{IB[M]}\right)$.

   \item[(d)]
   If $\mathcal{I}(A, B)\cong \mathcal{I}\left(\frac{A[M]}{(I\cap M)A[M]}, \frac{B[N]}{IB[N]}\right)$ and $N$ is affine, then (i) $A$ is subintegrally closed in $B$, (ii) $\frac{A[M]}{(I\cap M)A[M]}$ is subintegrally closed in $\frac{B[N]}{IB[N]}$, (iii) $B$ is reduced or $M=N$.
\end{enumerate}
\end{theorem}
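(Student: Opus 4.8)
The plan is to reduce every part to the homotopy-invariance result \cite[Theorem 1.2]{Sarwar} (Theorem~\ref{Sarwar-Main-Thm}), exploiting the combinatorial structure of monoid discrete Hodge algebras. Write $\bar A=\frac{A[M]}{(I\cap M)A[M]}$ and $\bar B=\frac{B[N]}{IB[N]}$; both are $N$-graded rings with degree-zero parts $A$ and $B$. Since $M,N$ are positive and $I$ is proper, the augmentations $A[M]\to A$, $B[N]\to B$ descend to ring retractions $\bar A\to A$, $\bar B\to B$ of the structure inclusions $A\hookrightarrow\bar A$, $B\hookrightarrow\bar B$, and $A=\bar A\cap B$ inside $\bar B$. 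By functoriality of $\mathcal I(-,-)$ these give group homomorphisms $\phi\colon\mathcal I(A,B)\to\mathcal I(\bar A,\bar B)$ and $\psi\colon\mathcal I(\bar A,\bar B)\to\mathcal I(A,B)$ with $\psi\phi=\mathrm{id}$; so $\phi$ is always split injective, each isomorphism asserted in (a)--(c) is exactly the surjectivity of $\phi$, and (d) is its failure. I will also use the elementary fact that a subintegrally closed extension restricts along compatible retractions: if $R\subseteq S$ is subintegrally closed and $R\to R'$, $S\to S'$ are compatible retractions of inclusions $R'\hookrightarrow R$, $S'\hookrightarrow S$ with $R'=R\cap S'$, then $R'\subseteq S'$ is subintegrally closed (an element of $S'$ subintegral over $R'$ is subintegral over $R$, hence lies in $R\cap S'=R'$).

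For (a): as $N$ is affine, $I=\mathfrak p_1\cap\cdots\cap\mathfrak p_s$ with $\mathfrak p_i$ prime monoid ideals, and for each nonempty $J\subseteq\{1,\dots,s\}$ the face $F_J=N\setminus\bigcup_{i\in J}\mathfrak p_i$ is an affine positive submonoid disjoint from $I$. Hence the $B$-span of $F_J$ in $\bar B$ is the monoid algebra $B[F_J]$ and is a ring retract of $\bar B$ (complementary ideal $(\bigcup_{i\in J}\mathfrak p_i)\bar B$), while the $A$-span of $F_J\cap M$ in $\bar A$ equals $A[F_J\cap M]=\bar A\cap B[F_J]$ and is a compatible retract of $\bar A$. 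By the fact above, $\bar A$ subintegrally closed in $\bar B$ forces $A[F_J\cap M]$ subintegrally closed in $B[F_J]$ for all $J$, and (via the degree-zero retraction) $A$ subintegrally closed in $B$. Now $\Spec\bar B$ is covered by the $\Spec B[F_{\{i\}}]$ with overlaps $\Spec B[F_J]$, and $(\bar A,\bar B)$ is assembled from the pairs $(A[F_J\cap M],B[F_J])$ by iterated Milnor squares (all transition maps being surjective monoid-algebra quotients). Comparing the resulting relative Mayer--Vietoris exact sequences with those of the constant diagram in which every face is replaced by $(A,B)$, one gets a map of long exact sequences that is an isomorphism on all unit groups --- a positive monoid contributes no new units --- and, by Theorem~\ref{Sarwar-Main-Thm} applied to the affine positive $F_J$ and the subintegrally closed pairs $A[F_J\cap M]\subseteq B[F_J]$, on all the terms $\mathcal I(A[F_J\cap M],B[F_J])$; an iterated five-lemma then gives that $\phi$ is onto.

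Part (b) removes affineness by a filtered colimit. Write $N$ as the directed union of its finitely generated, hence affine positive, submonoids $N_\lambda$; put $M_\lambda=M\cap N_\lambda$, $I_\lambda=I\cap N_\lambda$ (radical in $N_\lambda$), so $\bar A=\varinjlim\bar A_\lambda$, $\bar B=\varinjlim\bar B_\lambda$ and $\mathcal I(\bar A,\bar B)=\varinjlim\mathcal I(\bar A_\lambda,\bar B_\lambda)$ (a finitely generated invertible submodule and its inverse already exist at a finite stage). As $M$ is subintegrally closed in $N$, $M_\lambda=\{n\in N_\lambda:2n,3n\in M_\lambda\}$ is subintegrally closed in $N_\lambda$; with $B$, hence $B[N_\lambda]$, reduced and $A$ subintegrally closed in $B$, the preliminaries on subintegral closure of monoid algebras give $A[M_\lambda]$ subintegrally closed in $B[N_\lambda]$, and running the retract/Milnor-square bookkeeping of (a) in the downward direction descends this to $\bar A_\lambda$ subintegrally closed in $\bar B_\lambda$; now apply (a) stagewise and take the colimit. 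Part (c) is the case $M=N$, where reducedness is not needed: then $\bar B=\bar A\otimes_A B$ with $\bar A$ a free, in particular faithfully flat, $A$-module, so $\phi$ is base change along $A\to\bar A$; since $\bar A$ is an $M$-graded $A$-algebra with degree-zero part $A$ and no nilpotent monomials ($M$ torsion-free), adapting the proof of Sadhu--Singh \cite{Sadhu and Singh} (the case $\bar A=A[X]$) to $\bar A$, together with $\bar B=\bar A\otimes_A B$, yields surjectivity of $\phi$ from the lone hypothesis that $A$ is subintegrally closed in $B$.

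For (d), argue by contraposition, using the $N$-grading of $\bar B$ (with $\bar B_0=B$) and the fact that every module in the image of the injective $\phi$ has the form $\bar A\cdot J$ with $J\subseteq B$ invertible over $A$, hence is supported on $M\setminus I$ with degree-zero part in $\mathcal I(A,B)$. If $A$ is not subintegrally closed in $B$, take $b$ in the subintegral closure of $A$ in $B$ with $b\notin A$ and a nonzero $n\in M\setminus I$ (so $kn\notin I$ for all $k\ge1$, by radicality of $I$); then $1+b\,x^{n}$ generates an invertible $\bar A$-submodule, subintegral over $\bar A$, whose image under $x^{n}\mapsto1$ is not in $\mathcal I(A,B)$, so it lies outside $\operatorname{im}\phi$ --- this proves (i), and the same recipe applied to a witnessing element of $\bar B$ proves (ii). Finally, if $B$ is not reduced and $M\ne N$, one may choose $n\in(N\setminus M)\setminus I$ (the complementary case $N\setminus M\subseteq I$ puts the extension into the $M=N$ situation of (c)) and $0\ne b\in B$ with $b^{2}=0$; then $1+b\,x^{n}$ is a unit of $\bar B$ with a nonzero component in degree $n\notin M\setminus I$, so $\bar A(1+b\,x^{n})\notin\operatorname{im}\phi$, giving (iii). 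The main obstacle is the engine behind (a): setting up the relative Mayer--Vietoris exact sequence for $\mathcal I(-,-)$ along the Milnor squares of a monoid discrete Hodge algebra and threading subintegral closedness through those squares and their iterations --- this is exactly where the subintegrality hypothesis is used essentially, and where the statement fails over $\mathbb{F}_p$-algebras; the colimit and base-change arguments of (b), (c) and the grading computation of (d) are comparatively routine.
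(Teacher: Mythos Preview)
Your overall architecture for (a)--(c) matches the paper's: decompose the radical ideal into monoid primes, observe that subintegral closedness descends along the face retractions, invoke Sarwar's theorem on each face, and glue via Milnor squares; parts (b) and (c) are then reductions to (a). Two points, however, do not go through as written.

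\textbf{Part (a), the gluing step.} Your five-lemma argument asserts that the comparison map is an isomorphism on unit groups because ``a positive monoid contributes no new units''. That statement (Lemma~\ref{LmUnits} in the paper) requires the coefficient ring to be \emph{reduced}, which is not assumed in (a). If $B$ has a nonzero nilpotent $\epsilon$ then $1+\epsilon x^{n}\in\U(B[F_J])\setminus\U(B)$, so the unit comparison fails and your five-lemma input is false. The paper avoids this by proving a dedicated three-term exact sequence for $\mathcal I$ (Lemma~\ref{7l1}) under the hypothesis that the vertical map in the Milnor square is a \emph{split} surjection in the category of ring extensions; the split is supplied by the monoid inclusion $N\setminus\mathfrak p_1\hookrightarrow N$. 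This lemma is exactly the ``relative Mayer--Vietoris'' you want, but its proof is a nontrivial diagram chase through the Roberts--Singh $\U/\mathcal I/\Pic$ sequence and does not reduce to the bare unit comparison you invoke.

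\textbf{Parts (d)(i) and (d)(ii).} Your witness $\bar A(1+bx^{n})$ is only an invertible $\bar A$-submodule of $\bar B$ if $1+bx^{n}\in\U(\bar B)$. When $b$ is merely subintegral over $A$ (say $b^{2},b^{3}\in A$, $b\notin A$) this is false in general: take $A=k[t^{2},t^{3}]\subset B=k[t]$, $N=\mathbb Z_{+}$, $I=\emptyset$; then $1+tx\in k[t][x]$ is a nonconstant polynomial in a domain, hence not a unit, and $\bar A(1+tx)$ has no inverse. The paper instead builds \emph{non-principal} invertible modules $J=(\bar b^{2},\,1-\bar b\,\bar m)$ and $J'=(\bar b^{2},\,1+\bar b\,\bar m)$ and checks $JJ'=\bar A$ directly; your argument needs an analogous construction. (Your approach \emph{does} work in (d)(iii), where $b^{2}=0$ genuinely makes $1+bx^{n}$ a unit of $\bar B$; this is in fact more direct than the paper's route through the $\Nil$-computations of Theorem~\ref{Nil-Thm} and Lemma~\ref{Lm_Bar}, though you should note the boundary case $N\setminus M\subseteq I$ needs separate care, as you partly acknowledge.)
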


 We also consider when the ring extensions is not subintegrally closed and prove the following result.
\begin{theorem}(Theorem $\ref{diagram-thm}$)
 Let $A\subset B$ be an extension of rings and let $^+\!\!A$ denote the subintegral closure of $A$ in $B$. Assume $M$ is an affine positive monoid, and $I$ is a radical ideal in $M$. Then\\
 (i) the following diagram 
 
 \[
\begin{tikzcd}
1 \arrow[r] & \CI(A,^+\!\!A) \arrow[r, "i"] \arrow[d, "{\theta(A,^+\!\!A)}"'] & \CI(A,B) \arrow[r, "{\phi(A,^+\!\!A,B)}"] \arrow[d, "{\theta(A,B)}"'] & \CI(^+\!\!A,B) \arrow[r] \arrow[d, "{       \theta(^+\!\!A,B)}" '] & 1 \\
1 \arrow[r] & \CI\left(\frac{A[M]}{IA[M]},\frac{^+\!\!A[M]}{I^+\!\!A[M]}\right) \arrow[r]                                    & \CI\left(\frac{A[M]}{IA[M]},\frac{B[M]}{IB[M]}\right) \arrow[r, "\phi'"']                                      & \CI\left(\frac{^+\!\!A[M]}{I^+\!\!A[M]},\frac{B[M]}{IB[M]}\right) \arrow[r]                               & 1
\end{tikzcd}
\]
is commutative with exact rows, where $\phi' = \phi\left(\frac{A[M]}{IA[M]},\frac{^+\!\!A[M]}{I^+\!\!A[M]},\frac{B[M]}{IB[M]}\right)$.\\
(ii) If $\mathbb{Q}\subset A$, then $\CI\left(\frac{A[M]}{IA[M]},\frac{^+\!\!A[M]}{I^+\!\!A[M]}\right)\cong \frac{\mathbb{Z}[M]}{I\mathbb{Z}[M]}\otimes_{\mathbb{Z}}\CI(A,^+\!\!A).$
 \end{theorem}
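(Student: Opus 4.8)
The plan is to derive both rows of the diagram from one source: the exact sequence attached to a tower of ring extensions $R\subseteq S\subseteq T$ in which $S$ is the subintegral closure of $R$ in $T$. So I would first record that sequence. For any tower $R\subseteq S\subseteq T$ there are natural homomorphisms $\CI(R,S)\xrightarrow{\iota}\CI(R,T)$ (an invertible $R$-submodule of $S$ is also one of $T$) and $\CI(R,T)\xrightarrow{\psi}\CI(S,T)$, $J\mapsto SJ$, and $1\to\CI(R,S)\xrightarrow{\iota}\CI(R,T)\xrightarrow{\psi}\CI(S,T)$ is exact for formal reasons: $\iota$ is injective; $\psi\iota$ is trivial because $\CI(S,S)=\{S\}$ (any $S$-submodule $K\subseteq S$ that is invertible as such satisfies $S=KK'\subseteq K\subseteq S$); and $\psi(J)=S$, i.e. $SJ=S$, forces $J\subseteq SJ=S$ and likewise $J^{-1}\subseteq S$, so $J\in\CI(R,S)$. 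Surjectivity of $\psi$ when $S$ is the subintegral closure of $R$ in $T$ is the one nonformal input — it is exactly the surjectivity asserted in the top row — and I would isolate it as a preliminary lemma (cf. \cite{Sadhu and Singh},\cite{Sarwar}). Taking $(R,S,T)=(A,{}^+\!\!A,B)$ yields the top row; so for (i) it remains to identify the bottom row with the tower sequence of $R':=\frac{A[M]}{IA[M]}$, $S':=\frac{{}^+\!\!A[M]}{I\,{}^+\!\!A[M]}$, $T':=\frac{B[M]}{IB[M]}$ — equivalently, to show that $S'$ is the subintegral closure of $R'$ in $T'$. (That $R'\subseteq S'\subseteq T'$ and that $i',\phi'$ are defined is immediate from the homogeneous decomposition $\frac{C[M]}{IC[M]}=\bigoplus_{m\in M\setminus I}Ct^m$, which holds because $I+M=I$.)

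I would prove ``$S'$ is the subintegral closure of $R'$ in $T'$'' in two steps. Step 1: $R'\subseteq S'$ is subintegral. Since $A\subseteq{}^+\!\!A$ is subintegral, ${}^+\!\!A$ is a filtered union of subrings obtained from $A$ by finite chains of elementary subintegral extensions; the functor $-\otimes_{\mathbb{Z}}\mathbb{Z}[M]$ preserves this (from $C\subseteq C[b]$ with $b^2,b^3\in C$ one gets $C[M]\subseteq C[M][b]$ with $b^2,b^3\in C[M]$), and so does reduction modulo $I$, since $b$ lies in degree $0$ and $0\notin I$, so $b^2,b^3$ remain in the relevant subring of $R'$; hence $A[M]\subseteq{}^+\!\!A[M]$, and then $R'\subseteq S'$, are subintegral. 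Step 2: $S'$ is subintegrally closed in $T'$. Apply Theorem \ref{MainTheorem-1} to the extension ${}^+\!\!A\subseteq B$: since ${}^+\!\!A$ is subintegrally closed in $B$ and $M=N$, part (c) gives $\CI({}^+\!\!A,B)\cong\CI(S',T')$; since moreover $M=N$ is affine, part (d)(ii) then forces $S'$ to be subintegrally closed in $T'$. Combining Steps 1 and 2, $S'$ is the subintegral closure of $R'$ in $T'$, so the bottom row is exact.

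Commutativity of the two squares I would verify directly on submodules: each $\theta$ sends an invertible submodule $J$ to the invertible submodule of the relevant quotient generated by the image of $J[M]$; $i,i'$ are inclusions; and $\phi(A,{}^+\!\!A,B),\phi'$ extend scalars (to ${}^+\!\!A$, resp. $S'$). The left square commutes because the image of $J[M]$ in $T'$ is the image of its image in $S'$; the right square commutes because reduction modulo $I$ and extension of scalars commute. This proves (i). For (ii): by Step 1 above, $R'\subseteq S'$ is subintegral, and $\mathbb{Q}\subseteq A\subseteq R'$; so by Roberts--Singh \cite{Roberts and Singh} there are isomorphisms of abelian groups $\CI(R',S')\cong S'/R'$ and $\CI(A,{}^+\!\!A)\cong{}^+\!\!A/A$. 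By the homogeneous decomposition, $R'=\bigoplus_{m\in M\setminus I}At^m$ and $S'=\bigoplus_{m\in M\setminus I}{}^+\!\!A\,t^m$, so $S'/R'\cong\bigoplus_{m\in M\setminus I}({}^+\!\!A/A)\,t^m$; and since $\frac{\mathbb{Z}[M]}{I\mathbb{Z}[M]}=\bigoplus_{m\in M\setminus I}\mathbb{Z}t^m$ is free abelian on $M\setminus I$ while tensor product commutes with direct sums,
\[
\CI(R',S')\;\cong\;S'/R'\;\cong\;\frac{\mathbb{Z}[M]}{I\mathbb{Z}[M]}\otimes_{\mathbb{Z}}\left({}^+\!\!A/A\right)\;\cong\;\frac{\mathbb{Z}[M]}{I\mathbb{Z}[M]}\otimes_{\mathbb{Z}}\CI(A,{}^+\!\!A),
\]
which is (ii).

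The step I expect to be the real obstacle is Step 2 — showing $\frac{{}^+\!\!A[M]}{I\,{}^+\!\!A[M]}$ is subintegrally closed in $\frac{B[M]}{IB[M]}$ — since this is precisely what forces the detour through Theorem \ref{MainTheorem-1}(c)--(d) (and hence through the affineness of $M$) rather than a direct argument; the tower exact sequence, the commutativity check, and the grading computations behind (ii) are all routine.
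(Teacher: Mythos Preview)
Your argument is correct and close to the paper's, but there is one organizational difference worth noting. For the surjectivity of $\phi'$ you take a detour: you use Theorem~\ref{MainTheorem-1}(c) to get $\CI({}^+\!\!A,B)\cong\CI(S',T')$ and then feed this back into Theorem~\ref{MainTheorem-1}(d)(ii) to conclude that $S'$ is subintegrally closed in $T'$, so that $S'$ is the subintegral closure of $R'$ in $T'$ and the general tower lemma applies. The paper is more economical: having established that $\theta({}^+\!\!A,B)$ is an isomorphism (Theorem~\ref{MainTheorem-1}(c)) and that $\phi(A,{}^+\!\!A,B)$ is surjective (Sadhu \cite{Sadhu2015}), commutativity of the right square gives $\phi'\circ\theta(A,B)=\theta({}^+\!\!A,B)\circ\phi(A,{}^+\!\!A,B)$ surjective, hence $\phi'$ surjective---no appeal to (d)(ii) and no need to identify $S'$ as the subintegral closure. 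Your route does buy the extra statement that $S'$ is the subintegral closure of $R'$ in $T'$, which is of independent interest but not required for the theorem. Your treatment of the remaining exactness (the formal three-term argument) and of part~(ii) (Roberts--Singh together with the homogeneous decomposition $\frac{C[M]}{IC[M]}=\bigoplus_{m\in M\setminus I}C\,t^m$) is essentially the paper's.
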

 
\par
The structure of the paper is as follows. In Section~\ref{Sec_preli}, we review the foundational material on monoids and related ring-theoretic concepts, together with the basic results used throughout the paper. Section~\ref{On monoid subintegral extension} is devoted to studying the equivalence of the notions of subintegrality and weak subintegrality for an extension of monoid algebras (see Theorem~\ref{twkly1}). Before proving Theorem~\ref{twkly1}, we prove a crucial combinatorial lemma (see Lemma \ref{Lemma3}) which is used in Theorem~\ref{twkly1}.  In Section~\ref{Sec_4}, we first recall the definition of a Milnor square in the category of rings and then extend this notion to the category of ring extensions. We derive a crucial exact sequence of abelian groups (see Lemma~\ref{7l1}) associated with such a Milnor square.
\par
Further, we establish a key result (Lemma~\ref{Lm2}) on the subintegral closed-ness of an extension of quotient monoid algebras by radical ideals. We also compare the units of positive generalized discrete Hodge algebras with those of the base ring (Lemma~\ref{LmUnits}). Several results involving Cartesian squares under the $\Nil$ functor are proved (Lemmas~\ref{LmFibProdNill_1} and \ref{LmFibProdNill_2}), which lead to important isomorphisms as modules (Theorem~\ref{Nil-Thm}). These are central tools in the proof of our main result, Theorem~\ref{MainTheorem-1}.
\par
Finally, we address the case when the ring extension is not subintegrally closed and compute the corresponding ideal class group over monoid algebras quotient by radical ideals (see Theorem~\ref{diagram-thm}).


\section{Basic definitions and results}\label{Sec_preli}
In Section \ref{sec:monoid} we review several concepts about monoids. In Section \ref{Weakly subintegral extensions} we discuss similar ring theoretic concepts. In many cases a monoid algebra $A[M]$ satisfies a certain ring theoretic property if and only if $M$ satisfies the corresponding monoid theoretic property. However Theorem \ref{twkly1}  suggests that there is no monoid theoretic concept of weak subintegrality corresponding in this way to weak subintegrality in ring theory.


\subsection{Monoids}\label{sec:monoid}
In this subsection, first, we recall
the definition of commutative, cancellative and torsion-free monoid.
Then we recollect some basic notions pertaining to monoids from \cite[Chapter 2]{B-G}. 


\begin{definition}\label{sec-2:monoid}


\begin{enumerate}
    \item \label{Monoid-Item1}
    A commutative monoid is a non-empty set $M$ with a binary operation $\mu: M \times M \rightarrow M$ ( written $\mu(x,y) =x\cdot y$ ) which is commutative (i.e., $x\cdot y = y\cdot x$), associative  (i.e., $(x\cdot y) \cdot z =  x\cdot (y \cdot z)$ ), and has the neutral element $1$ (i.e., $x\cdot1 =x$). We will usually call the operation multiplication and we sometimes write  $x \cdot y=x y$ ($\forall x, y \in M$) if there is no confusion.
   
    \item\label{Monoid-Item2}
    A function $f:M\longrightarrow N$ is a homomorphism of  monoids if $f(xy) = f(x)f(y)$, $\forall \ x, y\in M$, and $f(1) = 1$ . We say $M$ and $N$ are isomorphic if there exists another monoid homomorphism $g:N\rightarrow M$ such that $gf=I_M, fg=I_N$ where $I_{M}$ and $I_{N}$ denotes the identity monoid morphism on $M$ and $N$ respectively.
    
        \item\label{Monoid-Item3}
    Let $\gp(M)$ denote the group of fractions of the monoid $M$.
    For the construction and uniqueness of $\gp(M)$, we refer the reader to \cite[page 50]{B-G}.
  

      \item\label{Monoid-Item4}
    A monoid $M$ is called 
   cancellative if $xy=zy$, implies $x=z$ $\forall \ x, y, z\in M$ and torsion-free if for $ x, y \in M$, whenever $x^n=y^n$ implies $x=y$ for $n\in \BN$, the set of all positive integers. 
   A monoid $M$ is cancellative if and only if the map $M\longrightarrow \gp(M)$ is injective  (see Section $2.A$  of \cite{B-G}).  It is easy to check that a finitely generated cancellative monoid $M$ is torsion-free if and only if $\gp(M)$ is torsion-free (see Section $2.A$  of \cite{B-G}).

     \item\label{Monoid-Item5}
  For a monoid $M$, let $\U(M)$ denote the group of units (i.e., invertible elements) of $M$. If $\U(M)$ is a trivial group, i.e., $\U(M)=\{1\}$, then $M$ is called positive. 
  A monoid is affine if it is finitely generated and isomorphic to a submonoid of a free abelian group $\mathbb{Z}^d$ for some $d\geq 0$.
  Within the class of commutative monoids, the affine monoids are characterized by being finitely generated, cancellative and torsion-free \cite[page $49$]{B-G}.

    \item\label{Monoid-Item6}
  An ideal of a monoid $M$ is a subset $I \subset M$ such that $I M \subset I$ $(I M:=\{x m \mid x \in I, m \in M\})$. 
  If $M\subset N$ is an extension of monoids and  $I$ is an ideal of $N$, then clearly, $I\cap M$ is also an ideal $M$.  A proper ideal $\mathfrak{p}$ is a prime ideal of $M$ if  $xy \in \mathfrak{p} \Rightarrow x \in \mathfrak{p}$ or $y \in \mathfrak{p}$ for all $x,y \in M$.   Let $I$ be an ideal of $M$. Then $\Rad(I):=\left\{x \in M \mid x^{n} \in I\right.$ for some $\left.n \in \mathbb{N}\right\}$. An ideal $I$ is called a radical ideal if $I=\Rad(I)$.
  
     \item\label{Monoid-Item7}
  Let $M\subset N$ be an extension of cancellative torsion free monoids. Let  $xM=\{xm: m\in M\}$. The extension $M\subset N$ is called elementary subintegral if $N = M\cup xM$ for some $x\in N$ with $x^{2}$, $x^{3}\in M$ (note in this case, $N$ is a monoid). If $N$ is a union of submonoids which are obtained from $M$ by a finite succession of elementary subintegral extensions, then the extension $M\subset N$ is called subintegral. The subintegral closure of $M$ in $N$, denoted by $M_{N}^s$ or simply $M^s$, is the largest subintegral extension of $M$ in $N$. We say $M$ is subintegrally closed in $N$ if $M_{N}^s = M$ i.e., for $x\in N$ whenever $x^2,x^3 \in M \Rightarrow x\in M$. $M$ is seminormal if it is subintegrally closed in $\gp(M)$.

   \item \label{Monoid-Item7B}
    Let $M$ be a submonoid of a commutative monoid $N$. The saturation or integral closure of M in N is the submonoid 
    $$\widetilde{M}_N:=\{  x\in N \mid x^n\in M \ \textit{for same}\ n\in \BN \}$$
    of $N$. One calls $M$ saturated or integrally closed in $N$ if $\widetilde{M}_N=M$. The normalization $\bar{M}$ of a cancellative monoid $M$ is the integral closure of $M$ in $\gp(M)$, and if $\bar{M}=M$, then M is called normal. Observe that normal monoid is a seminormal monoid.



\end{enumerate}
\end{definition}

\subsection{Ring extensions and monoid algebras}
\begin{definition}\label{Weakly subintegral extensions}

\begin{enumerate}
   
   \item \label{RingExtn:Item1}
     For a ring $A$, let $\U(A)$ denote the group of units of $A$ and $\Pic(A)$ denote the Picard group of rank one projective $A$-modules where the group operation is given by the tensor product of modules.

       For a ring homomorphism $f: A\longrightarrow B$, we have a morphism of abelian groups $\U(f) : \U(A)\longrightarrow \U(B)$ defined as $\U(f)(a) = f(a)$ and $\Pic(f): \Pic(A) \rightarrow \Pic(B)$ defined as $\Pic(f)([P]):=[P\otimes_{A}B]$, where $[P]\in \Pic(A)$. It is easily checked that $\U$, $\Pic$ are  functors from the category of rings to the category of abelian groups.

     The nilradical of a ring $A$, which is denoted by $\Nil(A)$, is the radical ideal of the zero ideal, i.e., $\Nil(A) :=\Rad(0) = \{x\in A : x^n = 0$  for some   $n\geq 1 \}$.

  \item  \label{RingExtn:Item2}
  Let $A\subset B$ be an extension of rings. The set of all $A$-submodules of $B$ is a commutative monoid under multiplication, with the identity $A$. Let  $\CI(A, B)$ denote the group of invertible elements of the monoid. The elements of the group  $\CI(A, B)$ are called invertible $A$-submodules of $B$.

    \item \label{RingExtn:Item3}
   An  extension of rings $A\subset B$ is defined to be a subintegral extension if $B$ is integral over $A$, the associated map $\Spec(B)\longrightarrow \Spec(A)$ is a bijection and $Q\in \Spec(B)$, $k(A\cap Q)\cong k(Q)$, where $k(Q) = B_Q/QB_Q$, $k(P)=A_P/PA_P$ for $P\in \Spec(A)$.
   Let $A\subset B$ be an extension of rings. The extension $A\subset B$ is called elementary subintegral if $B = A[b]$ for some $b\in B$ with $b^{2}, b^{3}\in A$. If $B$ is a filtered union of subrings which can be obtained from $A$ by a finite succession of elementary subintegral extensions, then the extension $A\subset B$ is subintegral (for a proof, see Section $2$ of \cite{Swan1980}). The subintegral closure of $A$ in $B$, denoted by $A_{B}^{s}$ or simply $A^s$, is the largest subintegral extension of $A$ in $B$. We say $A$ is subintegrally closed in $B$ if $A_{B}^{s} = A$. It is easy to see that  $A$ is subintegrally closed in $B$ if and only if $b \in B, b^{2}, b^{3} \in A$ imply $b \in A$. A ring A is called seminormal if it is reduced and subintegrally closed in $\PQF(A):= \prod_{\mathfrak{p}}\QF(A/\mathfrak{p})$, where $\p$ runs through the minimal prime ideals of $A$ and $\QF(A/\mathfrak{p})$ is the quotient field of $A/\mathfrak{p}$ (see \cite{B-G}, page $154$). An element $b\in B$ is said to be  subintegral over $A$ if $A\subset A[b]$ is  subintegral extension (see \cite{Singh1996}, page $117$).

     \item \label{RingExtn:Item4}
      ({\bf Weakly subintegral extensions})  In \cite{Yanagihara1985} Yanagihara defined an extension of rings $A \subset B$ to be weakly subintegral if $B$ is integral over $A$, the associated map $\Spec(B) \rightarrow \Spec(A)$ is a bijection, and the associated residue field extensions $k(Q \cap A) \hookrightarrow Q, Q \in \Spec(B)$ are purely inseparable. He defines an extension $A \subset A[b]$ to be elementary weakly subintegral if $b^{p}, p b \in A$ for some prime $p$, and proves that an elementary weakly subintegral extension is weakly subintegral. It is clear from these definitions that the notions of subintegral and weakly subintegral coincide for extensions of $\mathbb{Q}$-algebras, and that a subintegral extension is weakly subintegral. Yanagihara in \cite{Yanagihara1985} proves that given any ring extension $A \subset B$ there is a largest sub-extension ${ }_{B}^{*} A$ such that $A \subset{ }_{B}{ }^{*} A$ is weakly subintegral, and calls ${ }_{B}{ }^{*} A$ the weak normalization of $A$ in $B$. In  \cite[Theorem 2, page 92]{Yanagihara1985} it is proved that ${ }_{B}^{*} A$ is the union of all subrings of $B$ that can be obtained from $A$ by sequence of elementary subintegral or elementary weakly subintegral extensions. An element $b \in B$ is called weakly subintegral over $A$ if $A \subset A[b]$ is a weakly subintegral extension. For more about subintegrality and weak subintegrality, we refer the reader to \cite{Yanagihara1985}.

      \item \label{RingExtn:Item5}
    ({\bf Category of ring extensions}) The category of ring extensions is a category in which objects are ring extension $A\subset B$. A morphism between two ring extensions $A \subset B$ and $C \subset D$ is a ring homomorphism $f: B \longrightarrow D$ which restricts to a ring homomorphism $f_{\mid A}: A \longrightarrow C$ (sometimes we may abuse notation by using $f$ instead of $f_{\mid A}$ ). Compositions are the composition of ring homomorphisms. We shall denote this category by $Ring_{ext}$. \\
   Let $A\subset B$,  $C\subset D$ be ring extensions and $p: (A\subset B)\longrightarrow (C\subset D)$ be a morphism of ring extensions. We define $\CI(p): \CI(A,B)\longrightarrow \CI(C,D)$ as $\CI(p)(M):= Cp(M)$ which is a $C$-module generated by $p(M)$.
  Note that $\mathcal{I}$ is a functor from $Ring_{ext}$ to the category of abelian groups (see \cite[Definition $2.1$]{Roberts and Singh}).
 
       \item  \label{RingExtn:Item6}
    \textbf{(Monoid algebras)}
  Given a monoid $M$ (in multiplicative notation) and a commutative ring $A$ the usual definition of the monoid algebra $A[M]$ is the free $A$-module  with basis $M$. This means that each element $f \in A[M]$ can be represented uniquely as a sum $f = \sum\limits_{x\in M}f_{x}x$, all but a finitely many $f_x \in A$ are zero. Multiplication is defined by $fg = \sum\limits_{x\in M}(\sum\limits_{yz=x}f_{y}g_{z}yz)$, for details see \cite[Chapter $4$, section $4.B$, page-129]{B-G}.

   If $I$ is an ideal of a monoid $M$ then $A[I]$ and $IA[M]$ both denote the ideal generated by $I$ inside $A[M]$.

    
    

  \item \label{RingExtn:Item7}
  \textbf{(Generalized discrete Hodge algebras)}
 Let $R$ be a ring, $B:=R[X_1,\ldots,X_n]$ a polynomial ring with $n$-variables and $I$ an ideal of $B$ generated by monomials. Then the ring $A:=B/I$ is called a discrete Hodge algebra. For example, $R[X_1,\ldots,X_n,Y_1, \ldots , Y_m]/(X_iY_j: 1\leq i\leq r; 1\leq j \leq s; r\leq n; s\leq m)$. Let $M$ be a monoid and $I$ an ideal of $M$. Then the ring $A:=R[M]/IR[M]$ is called a generalized discrete Hodge algebra. For example, we write $M=(x^{2}, x y, y^{2})$ is a monoid generated by $x^{2}, xy$ and $y^{2}$. Let $I$ be the ideal generated by $xy$  inside $M$. Then $A=R[x^2,xy,y^2]/(xy)$ is a generalized discrete Hodge algebra.

  \item  \label{RingExtn:Item8}
  \textbf{(Split surjective morphisms)}
  A morphism $q_1: (A_1\subset B_1)\longrightarrow (A_2\subset B_2)$ of ring extensions is said to be a split surjection if there exists $q'_1: B_2\longrightarrow B_1$ such that $q_1\circ q'_1=\Id_{B_2}$ and the restriction $q'_{{1}_{|A_2}}: A_2\longrightarrow A_1$ such that $q_{{1}_{|A_1}}\circ q'_{{1}_{|A_2}} =\Id_{A_2}$.
  
  \begin{example}
  \label{examplesplitsurjection}
      Let $A\subset B$ be a ring extension. Define $A_1:=A[Y]$, $B_1:= B[Y]$, $A_2:= A$ and $B_2:= B$. The morphism $q_1: (A[Y]\subset B[Y])\longrightarrow (A\subset B)$ of ring extensions defined by 
      $q_1(Y)=0$ is a split surjective morphism in the category of ring extension because $q\circ i = \Id_{(A, B)}$, where the inclusion $i:  (A\subset B)\longrightarrow (A[Y]\subset B[Y])$ is defined by $b\mapsto b$. 
  \end{example}
 
 \end{enumerate}
\end{definition}

\begin{remark}
  Sometimes the operation is most conveniently written $+$ in which case the neutral element will be written as $0$. If need be the $+$ operation can be converted to multiplication by using exponentials. For example, in defining the monoid algebra $\mathbb{Z}\left[\mathbb{Z}_{+}\right]$the basis of $\mathbb{Z}\left[\mathbb{Z}_{+}\right]$ is the elements $\{0,1,2,3, \cdots\}$ of the monoid $\mathbb{Z}_{+}$ which we will represent respectively as $\left\{1, t, t^{2}, t^{3}, \cdots\right\}$ with addition $a+b$, for $a, b \in \mathbb{Z}_{+}$ corresponding to multiplication $t^{a} t^{b}=t^{a+b}, t$ a formal symbol or indeterminant. In this way the monoid algebra $\mathbb{Z}\left[\mathbb{Z}_{+}\right]$ becomes the polynomial ring $\mathbb{Z}[t]$. We leave it to the reader to make such translations. Thus we have $A\left[\mathbb{Z}^{n} \oplus \mathbb{Z}_{+}^{m}\right]=A\left[t_{1}, t_{1}^{-1} \ldots, t_{n}, t_{n}^{-1}\right] \otimes_{A} A\left[t_{1}, \ldots, t_{m}\right]$ for any commutative ring $A$. Another example comes from our citations of Theorems in \cite{B-G}. Theorems early in this book use additive notation, and it is convenient to revert to additive notation when applying them.
\end{remark}

The following result, due to Roberts and Singh, connects $\CI(A,B)$ with the Picard
and unit groups of $A$ and $B$.

\begin{proposition}\cite[Theorem $2.4$]{Roberts and Singh}
Let $A\subset B$ be an extension of rings. Then there exists a functorial exact sequence
{\small
\[\xymatrix{ 1 \ar@{->}[r] &\U(A) \ar@{->}[r]^{i} &\U(B) \ar@{->}[r]^{\theta} &\CI(A,B) \ar@{->}[r]^{\cl}
&\Pic(A)\ar@{->}[r]^{j} &\Pic(B)
}
\]
}
of abelian groups, where $i$ is an inclusion, $\theta(b) = Ab$, $\cl(I)$ is the class $[I]$ in $\Pic(A)$ and $j([I] = [I\otimes_{A}B]$.
\label{Lmexactseq}
\end{proposition}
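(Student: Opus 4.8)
The plan is to produce the four maps, check each is a well-defined homomorphism of abelian groups, and then verify exactness at the three interior terms $\U(B)$, $\CI(A,B)$ and $\Pic(A)$; functoriality will follow by inspecting naturality of each map, which I expect to be routine. The structural input I would set up first is that every invertible $A$-submodule $I$ of $B$ is a finitely generated projective $A$-module of rank one: picking $J\in\CI(A,B)$ with $IJ=A$ and writing $1=\sum_k x_ky_k$ with $x_k\in I$, $y_k\in J$ produces the splitting data showing the multiplication map $I\otimes_A J\to IJ=A$ is an isomorphism; the same reasoning gives $I\otimes_A J'\xrightarrow{\sim} IJ'$ whenever $I$ is invertible, which is exactly what makes $\cl([I]):=[I]$ multiplicative. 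That $\theta$ is a homomorphism is immediate from $Abb'=(Ab)(Ab')$ and $Ab\cdot Ab^{-1}=A$, $i$ is the restriction to units of the inclusion $A\subset B$, and $j$ is the functoriality of $\Pic$ applied to $A\subset B$.

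For exactness at $\U(B)$: $Ab=A$ forces $b\in A$ (as $b=b\cdot1\in Ab$) and $1=ab$ for some $a\in A$, so $b\in\U(A)$, and conversely $\U(A)\subseteq\Ker\theta$. For exactness at $\CI(A,B)$: if $\cl(I)=0$ then $I\cong A$, so $I=A\gamma$ where $\gamma\in B$ is the image of a generator; from $IJ=A$ one gets $\gamma J=A$, hence $\gamma y=1$ for some $y\in J\subseteq B$, so $\gamma\in\U(B)$ and $I=\theta(\gamma)$; conversely $\theta(b)=Ab$ is $A$-free of rank one, so $\cl(\theta(b))=0$. One inclusion of exactness at $\Pic(A)$ is also short: for $I\in\CI(A,B)$ the multiplication map $I\otimes_A B\to B$ is onto because $IB\supseteq IJB=(IJ)B=AB=B$, and a surjection between rank-one projective $B$-modules is an isomorphism (its kernel is a finitely generated projective $B$-module of rank zero, hence zero), so $j(\cl(I))=[I\otimes_A B]=0$; thus $\IIm\cl\subseteq\Ker j$.

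The hard part will be the reverse inclusion $\Ker j\subseteq\IIm\cl$: realizing an abstract class $[P]\in\Pic(A)$ with $P\otimes_A B\cong B$ as an honest \emph{invertible} $A$-submodule of $B$. Given a $B$-isomorphism $\phi\colon P\otimes_A B\xrightarrow{\sim} B$, I would compose the canonical map $P\to P\otimes_A B$, $p\mapsto p\otimes1$, with $\phi$ to obtain an $A$-linear $\psi\colon P\to B$; this is injective, which I would check by localizing at each prime $\p$ of $A$, where it becomes the (injective) localization map $A_{\p}\hookrightarrow B_{\p}$ since $P_{\p}\cong A_{\p}$. Then $I:=\psi(P)$ is an $A$-submodule of $B$ with $I\cong P$, hence $[I]=[P]$. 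To see $I$ is invertible I would use the evaluation isomorphism $c\colon P\otimes_A P^{*}\xrightarrow{\sim} A$, $P^{*}=\Hom_{A}(P,A)$: tensoring $c$ with $B$ and using that $\phi$ is an isomorphism produces a compatible $B$-isomorphism $\phi'\colon P^{*}\otimes_A B\xrightarrow{\sim} B$ with $\phi(p\otimes1)\,\phi'(q\otimes1)=c(p\otimes q)\in A$ for all $p,q$; setting $I':=\psi'(P^{*})$ with $\psi'(q)=\phi'(q\otimes1)$, the products of generators of $I$ and $I'$ are precisely the elements $c(p\otimes q)$, which lie in $A$ and span $A$ over $A$ because $c$ is onto. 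Hence $II'=A$, so $I\in\CI(A,B)$ with $\cl(I)=[P]$. The two steps I expect to require the most care are the injectivity of $\psi$ and the precise construction of $\phi'$ from $\phi$ so that the identity $\phi(p\otimes1)\phi'(q\otimes1)=c(p\otimes q)$ holds exactly, since that is what pins down $II'=A$.
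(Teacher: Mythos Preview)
The paper does not give its own proof of this proposition; it simply records the statement with a citation to \cite[Theorem~2.4]{Roberts and Singh}. Your proposal, by contrast, supplies a complete direct argument, and it is correct: the verifications of exactness at $\U(B)$ and $\CI(A,B)$ are standard, your argument for $\IIm\cl\subseteq\Ker j$ via the multiplication map $I\otimes_A B\to B$ is clean, and the delicate inclusion $\Ker j\subseteq\IIm\cl$ is handled properly --- the localisation check that $\psi\colon P\to B$ is injective uses only that $A\hookrightarrow B$ stays injective after localisation, and the dual construction of $I'=\psi'(P^{*})$ via the evaluation pairing $c$ does force $II'=A$ exactly as you describe. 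This is essentially the classical proof one finds in Roberts--Singh, so there is no genuine methodological difference to discuss; you have simply unpacked what the paper leaves as a citation.
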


\begin{lemma} \cite[Theorem $4.79$]{B-G} \label{Lm4}
Let $A\subset B$ be an extension of reduced rings and $M\subset N$ an extension of monoids. Then $A^{s}[M^s]$ is the subintegral closure of $A[M]$ in $B[N]$, 
where $M^s$ and $A^s$ denote the subintegral closure of 
$M$ in $N$ and of $A$ in $B$ respectively.
\end{lemma}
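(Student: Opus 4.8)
The plan is to prove the asserted equality as two inclusions, $A^{s}[M^{s}]\subseteq(A[M])^{s}_{B[N]}$ and $(A[M])^{s}_{B[N]}\subseteq A^{s}[M^{s}]$, where $(A[M])^{s}_{B[N]}$ denotes the subintegral closure of $A[M]$ in $B[N]$.

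For the first inclusion (which needs no reducedness hypothesis) I would observe that elementary subintegral extensions pass to monoid algebras: if $N_{0}\subseteq N_{0}\cup xN_{0}$ is elementary subintegral with $x^{2},x^{3}\in N_{0}$, then $A[N_{0}\cup xN_{0}]=A[N_{0}][x]$ with $x^{2},x^{3}\in A[N_{0}]$, and if $A_{0}\subseteq A_{0}[b]$ is elementary subintegral then $A_{0}[M]\subseteq A_{0}[M][b]$ is too. Passing to filtered unions, a subintegral monoid extension $M\subseteq N$ makes $A[M]\subseteq A[N]$ subintegral and a subintegral ring extension $A\subseteq B$ makes $A[M]\subseteq B[M]$ subintegral; applying this to the tower $A[M]\subseteq A^{s}[M]\subseteq A^{s}[M^{s}]$ and using transitivity of subintegral extensions, $A[M]\subseteq A^{s}[M^{s}]$ is subintegral, hence $A^{s}[M^{s}]\subseteq(A[M])^{s}_{B[N]}$.

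For the second inclusion I would first reduce the problem. Setting $C:=(A[M])^{s}_{B[N]}$, the first part gives $A^{s}[M^{s}]\subseteq C$, and the intermediate extension $A^{s}[M^{s}]\subseteq C$ of the subintegral extension $A[M]\subseteq C$ is again subintegral; so it suffices to show $A^{s}[M^{s}]$ is subintegrally closed in $B[N]$, as then every $f\in C$ is subintegral over $A^{s}[M^{s}]$ and therefore lies in it. Since $A^{s}$ is subintegrally closed in $B$, $M^{s}$ in $N$, and $B$ is still reduced, I may rename and reduce to: if $A$ is subintegrally closed in $B$, $M$ in $N$, and $B$ is reduced, then $A[M]$ is subintegrally closed in $B[N]$. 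Inserting the ring $A[N]$ and using transitivity of ``being subintegrally closed in'' (immediate from the criterion $b^{2},b^{3}\in A\Rightarrow b\in A$ and its monoid analogue), this splits into two claims: (I) $A[N]$ is subintegrally closed in $B[N]$ whenever $A$ is subintegrally closed in the reduced ring $B$; and (II) $A[M]$ is subintegrally closed in $A[N]$ whenever $A$ is reduced and $M$ is subintegrally closed in $N$.

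To prove (I) and (II) I would start with $f$ whose square and cube lie in the smaller ring. Since $B[N]$ is reduced ($B$ reduced and $N$ torsion-free cancellative), a grading argument together with leading-term analysis against suitable total orders on $\gp(N)$ reduces to the case $\gp(M)=\gp(N)$ and shows that the vertices of the convex hull of $\mathrm{supp}(f)$ contribute monomials lying in $M$ and --- in case (I) --- coefficients lying in $A$. The hard part will be propagating this from the extreme terms to all of $f$: subtracting the leading term and inducting fails because the remainder need not satisfy the same square and cube conditions. In (II) I would argue combinatorially, restricting $f$ to the faces of its support polytope, reducing to $A$ a domain, and exhibiting a finite chain of elementary subintegral monoid extensions inside $N$ that must collapse to $M$ since $M$ is subintegrally closed in $N$. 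In (I) the statement is a relative form of Traverso's theorem that subintegral-closedness (seminormality) ascends to polynomial and monoid extensions; this is the principal obstacle, it is where the seminormality machinery of \cite{B-G} is needed, and it is where the reducedness of $B$ is used essentially --- the hypothesis cannot be dropped, consistent with the examples given later in the paper. Granting (I) and (II) gives that $A[M]$ is subintegrally closed in $B[N]$, which yields the second inclusion and completes the proof.
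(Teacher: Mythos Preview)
The paper does not give its own proof of this lemma: it is simply quoted as \cite[Theorem~4.79]{B-G}, with no argument supplied. So there is nothing in the paper to compare your proposal against beyond the bare citation.

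Your outline is sound for the first inclusion and for the reduction of the second inclusion to claims (I) and (II); these steps are correct and standard. However, you do not actually prove (I) or (II). For (II) you offer only a vague plan (``restricting $f$ to the faces of its support polytope, reducing to $A$ a domain, and exhibiting a finite chain of elementary subintegral monoid extensions''), and for (I) you explicitly say ``this is where the seminormality machinery of \cite{B-G} is needed'' and grant the result. In other words, at the crux your proposal defers to \cite{B-G} just as the paper does, so it is not a self-contained proof. If your intent is to reproduce the Bruns--Gubeladze argument, you would need to actually carry out the convex-geometric/grading analysis you allude to; as written, (I) and (II) remain genuine gaps.
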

\begin{corollary}\label{Lm4Coro}
  Let $A\subset B$ be an extension of reduced rings and $M\subset N$ an extension of monoids.  If $A$ is subintegrally closed in $B$ and $M$ is subintegrally closed in $N$, then $A[M]$ is subintegrally closed in $B[N].$
\end{corollary}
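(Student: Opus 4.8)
The plan is to deduce this corollary directly from Lemma~\ref{Lm4} (i.e. \cite[Theorem~4.79]{B-G}) with essentially no extra work. The statement of that lemma is: for an extension of reduced rings $A\subset B$ and an extension of monoids $M\subset N$, the subintegral closure of $A[M]$ in $B[N]$ is exactly $A^s[M^s]$, where $A^s$ and $M^s$ are the subintegral closures of $A$ in $B$ and of $M$ in $N$ respectively. The hypotheses of the corollary say precisely that $A^s=A$ and $M^s=M$. Substituting these into the conclusion of the lemma, the subintegral closure of $A[M]$ in $B[N]$ equals $A[M]$ itself, which is by definition the assertion that $A[M]$ is subintegrally closed in $B[N]$.

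So the proof is a two-line argument: invoke Lemma~\ref{Lm4}, note that the hypotheses force $A^s=A$ and $M^s=M$, and read off $(A[M])^s_{B[N]} = A^s[M^s] = A[M]$. The only points worth a sentence of care are (a) that $B[N]$ is reduced when $B$ is reduced and $N$ is (cancellative) torsion-free — this is needed for the phrase ``subintegrally closed'' to behave well and is anyway part of the ambient setup, being a standard fact about monoid algebras of torsion-free monoids over reduced rings — and (b) that the definition of ``subintegrally closed'' in Definition~\ref{Weakly subintegral extensions}(\ref{RingExtn:Item3}) is literally ``the subintegral closure equals the ring itself,'' so no translation is required.

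There is essentially no obstacle here; the corollary is a straight specialization of the cited theorem. If one wanted a self-contained argument avoiding Lemma~\ref{Lm4}, one could instead argue directly: given $f\in B[N]$ with $f^2, f^3\in A[M]$, one must show $f\in A[M]$. This would require first passing to the reduced setting, then using the grading by $\gp(N)$ to reduce to homogeneous-type considerations, then controlling both the coefficients (forcing them into $A$ by $A$ being subintegrally closed in $B$) and the support (forcing it into $M$ by $M$ being subintegrally closed in $N$). That direct route is exactly the content of the proof of \cite[Theorem~4.79]{B-G}, so there is no reason to reproduce it; the efficient path is simply to cite the lemma and specialize.
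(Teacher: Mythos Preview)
Your proposal is correct and matches the paper's own proof essentially verbatim: the paper simply says the corollary follows from Lemma~\ref{Lm4} because under the hypotheses $A^s=A$ and $M^s=M$. Your additional remarks about reducedness of $B[N]$ and the possibility of a direct argument are fine commentary but not needed for the corollary itself.
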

\begin{proof}
   Follows from Lemma \ref{Lm4} because in this case $A^{s}=A$ and $M^{s}=M$.
    $\hfill \gj$
\end{proof}

\section{A theorem on subintegrality}
\label{On monoid subintegral extension}
We recall the definition of subintegral extension for monoids. Let $M$ and $N$ be two monoids. The monoid extension $M\subset N$ is  {\bf elementary subintegral} if $N=M\cup xM$ with $x^2,x^3\in M$.
  If $N=\cup N_{\gl}$, where $N_{\gl}$ can be obtained from $M$ by a finite number of elementary subintegral extensions, then the extension $M\subset N$ is {\bf subintegral}.

\begin{lemma}\label{ESub}
   If $N$ is elementary subintegral over $M$ and $z \in N$, then $z^{n} \in M$ for $n \geq 2$. 
\end{lemma}
\begin{proof}
   By definition of an elementary extension of monoids $N=y M$ for some $y \in N$ with $y^{2}, y^{3} \in M$. Then either $z \in M$ or $z=y w$ for some $w \in M$. If $z \in M$, then we are done. If $z=y w$, then $z^{n}=y^{n} w^{n}$ is always in $M$ because $y^{n}(n \geq 2)$ and $w^{n}$ are both in $M$. 
\end{proof}

\begin{proposition}\label{Prop8}
Let $M \subset N$ be an extension of monoids.
  The extension $M\subset N$ is subintegral $\Leftrightarrow$ $\forall z\in N$,  $z^j\in M \quad \forall \ j>>0$.
\end{proposition}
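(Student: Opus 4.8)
I would prove the two implications separately; both revolve around the cyclic sub‑extensions $M\subseteq \langle M,z\rangle$, where $\langle M,z\rangle$ denotes the submonoid of $N$ generated by $M$ together with one element $z$. For the ``$\Leftarrow$'' direction I plan to exhibit, for each such $z$, an explicit finite chain of elementary subintegral extensions from $M$ to $\langle M,z\rangle$, and then pass to the union $N=\bigcup_{z\in N}\langle M,z\rangle$. For the ``$\Rightarrow$'' direction I plan to run an induction on the number of elementary steps used to build a submonoid containing $z$, carrying along a bound on exponents that is uniform over the whole intermediate monoid.

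\textbf{The direction ``$\Leftarrow$''.} Assume that for every $z\in N$ one has $z^{j}\in M$ for all $j\gg 0$. Fixing $z$, I would pick $j_{0}\geq 2$ with $z^{j}\in M$ for all $j\geq j_{0}$ and, for $1\leq i\leq j_{0}$, set
\[
M_{i}:=\langle M,\ z^{i},z^{i+1},\dots,z^{j_{0}-1}\rangle \subseteq N,
\]
so that $M_{j_{0}}=M$, $M_{1}=\langle M,z\rangle$, and $M_{i}=\langle M_{i+1},z^{i}\rangle$. The point to check is that $z^{2i},z^{3i}\in M_{i+1}$ for each $i$: if $2i\geq j_{0}$ this holds because $z^{2i}\in M$, and if $2i\leq j_{0}-1$ it holds because (since $2i\geq i+1$) $z^{2i}$ is literally one of the generators listed in $M_{i+1}$, and similarly for $3i$. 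Once this is known, all powers $z^{ai}$ with $a\geq 2$ are absorbed, so $M_{i}=M_{i+1}\cup z^{i}M_{i+1}$, exhibiting $M_{i+1}\subseteq M_{i}$ as an elementary subintegral extension (or an equality if $z^{i}\in M_{i+1}$). Hence $M\subseteq\langle M,z\rangle$ is subintegral, and writing $N$ as the union of these submonoids gives that $M\subseteq N$ is subintegral.

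\textbf{The direction ``$\Rightarrow$''.} Assume $M\subseteq N$ is subintegral and fix $z\in N$; by definition $z$ lies in some submonoid $N'$ obtained from $M$ by a finite number $k$ of elementary subintegral extensions. I would prove by induction on $k$ the uniform statement: \emph{if $P\subseteq Q$ is built from $P$ by $k$ elementary subintegral extensions, there is $b_{k}\geq 1$, depending only on $k$, with $v^{j}\in P$ for all $v\in Q$ and all $j\geq b_{k}$}. The cases $k=0$ ($b_{0}=1$) and $k=1$ ($b_{1}=2$, by Lemma~\ref{ESub}) are clear. For the step, write $P=P_{0}\subseteq\cdots\subseteq P_{k}\subseteq P_{k+1}=Q$ and peel off the last extension: Lemma~\ref{ESub} gives $v^{i}\in P_{k}$ for all $v\in Q$, $i\geq 2$, and the inductive hypothesis applied to $P_{0}\subseteq P_{k}$ yields $b_{k}$ with $w^{j}\in P_{0}$ for all $w\in P_{k}$, $j\geq b_{k}$. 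Taking $w=v^{i}$ shows $v^{ij}\in P_{0}$ for all $i\geq 2,\ j\geq b_{k}$. The exponent set $E:=\{\ell\geq 1:\ v^{\ell}\in P_{0}\}$ is closed under addition (as $P_{0}$ is a monoid) and contains $\{ij:\ i\geq 2,\ j\geq b_{k}\}$, hence contains the two coprime integers $b_{k}^{2}$ and $(b_{k}+1)^{2}$ (using $b_{k}\geq 2$ when $k\geq 1$); by the Sylvester–Frobenius bound $E$ then contains every integer $\geq b_{k}^{2}(b_{k}+1)^{2}=:b_{k+1}$. Applying this to $N'$ gives $z^{j}\in M$ for all $j\geq b_{k}$, which is the claim.

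\textbf{Where the difficulty lies.} The ``$\Leftarrow$'' direction will be essentially bookkeeping once the chain $M_{i}$ is written down. The real obstacle is the uniformity in ``$\Rightarrow$'': Lemma~\ref{ESub} only moves one power down one level, and iterating it directly would leave us unable to control which residue classes of exponents get hit (powers of $z^{2}$ and $z^{3}$ alone never reach exponents $\equiv 1,5\pmod 6$). Threading a bound through the induction that works for \emph{every} element of the intermediate monoid repairs this, because it can then be applied simultaneously to $v^{2},v^{3},v^{4},\dots$; the elementary fact that $\{ij:i\geq 2,\ j\geq b\}$ already contains a coprime pair such as $b^{2}$ and $(b+1)^{2}$ is exactly what forces the exponent set to be cofinite.
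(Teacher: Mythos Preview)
Your proof is correct. The $(\Leftarrow)$ direction matches the paper's argument almost exactly; the paper is terser (``In this way, after finite steps\ldots''), while you write out the descending chain $M_{j_0}\subset M_{j_0-1}\subset\cdots\subset M_1$ explicitly, but the content is the same.

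For $(\Rightarrow)$ your argument is sound but more elaborate than necessary. The paper avoids the inductive uniform bound entirely: given a chain $M=N_1\subset\cdots\subset N_r$ with $z\in N_r$, it simply iterates Lemma~\ref{ESub} along the single element $z$, squaring at each step to get $z^{2^{r-1}}\in M$, and cubing at each step to get $z^{3^{r-1}}\in M$. Since $\gcd(2^{r-1},3^{r-1})=1$ and the exponent set $\{n:z^n\in M\}$ is closed under addition (as $M$ is a monoid), Sylvester--Frobenius finishes immediately. Your concern in the final paragraph about residue classes is therefore misplaced: you do not need to control which exponents appear at intermediate levels, only to land two coprime exponents in $M$ at the end, and the pair $(2^{r-1},3^{r-1})$ does this for free. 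Your recursive bound $b_{k+1}=b_k^2(b_k+1)^2$ works, and your observation that $b_k^2$ and $(b_k+1)^2$ are coprime is correct, but it is machinery you can dispense with.
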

\begin{proof}
 $(\Rightarrow )$ Let $z\in N=\cup N_{\gl}$, hence $z\in N_{\gm}$ for some $\gm$. Hence we can write
 $M=N_1\subset N_2\subset \ldots \subset N_r=N_{\gm}$, where $N_i\subset N_{i+1}$, $1\leq i\leq r-1$ is an elementary subintegral extension, and $z\in N_{r}$. If $r=1$ then $z\in M$ we are done. Otherwise by a repetition of Lemma \ref{ESub} $z^{2^{r-1}} \in M$ and $z^{3^{r-1}} \in M$. Since $2^{r-1}$ and $3^{r-1}$ are relatively prime, by the Diophantine Frobenius problem (see page 31, Theorem 2.1.1, of \cite{Alfosin}) any integer greater than or equal to $i_{0}=2^{r-1} 3^{r-1}-2^{r-1}-3^{r-1}+1$ is of the form $A 2^{r-1}+B 3^{r-1}$ with $A, B$ integers $\geq 0$ which makes $z^{j} \in M$ if $j \geq i_{0}$, as desired.

 $(\Leftarrow)$ Let $z\in N$ such that $z^j\in M$ for all $j>>0$. Then there exists $i_0$ such that  $z^n\in M$ for all $n\geq i_0$. If $n_{0}=1$ then $z\in M$ and we are done. Otherwise take $v=z^{i_{0}-1}$. Then $v^2,v^3\in M$. Let $M_1=M\cup vM$. Then $M\subset M_1$ is an elementary subintegral extension. In this way, after finite steps, there exists a positive integer $r$ such that $M_{r}=M\cup zM$ and $M=M_0\subset M_1\subset M_2\subset \cdots \subset M_{r}$, where $M_i\subset M_{i+1}$ is an elementary subintegral extension for $i=0,1,\ldots, r-1$. Thus  there exists $M_{r}$ such that $M_{r}$ can be obtained from $M$ by a finite number of elementary subintegral extensions. Also we have $N=\cup M_{r}$. This completes the proof. 
 $\hfill \gj$
 
\end{proof}

\begin{theorem} \cite[Theorem $6.10$]{Reid Roberts Singhweak}
\label{Weakly Subintegrality Criterion}
 Let $A\subset B$ be an extension of commutative rings and let $b\in B$. Then the following are equivalent:\\
 (1)  $b$ is weakly subintegral over $A$.\\
 (2) $b$ satisfies any of the (equivalent) conditions:\\
   \begin{itemize}
       \item 
       There exists a positive integer $N$ and elements $c_1, \ldots , c_p\in B$ such that \[b^n+\sum\limits_{i=1}^{p} \binom{n}{i} c_ib^{n-i}\in A\,,for \ N\leq n\leq 2N+2p-1.\]
       \item
        There exists elements $c_1, \ldots ,   c_p\in B$ such that \[b^n+\sum\limits_{i=1}^{p} \binom{n}{i} c_ib^{n-i}\in A\,,for \ all \ n\geq 1.\]
        \item
        There exists  elements $c_1, \ldots , c_p\in B$ such that \[b^n+\sum\limits_{i=1}^{p} \binom{n}{i} c_ib^{n-i}\in A\,,for \ 1\leq n\leq 2p+1.\]
        \item
        There exists elements $c_1, \ldots , c_p\in B$ and  $N\in \mathbb{N}$, $s\in \mathbb{Z}^{+}$, $N\geq s+p$   such that \[b^n+\sum\limits_{i=1}^{p} \binom{n}{i} c_ib^{n-i-s}\in A\,, for \ all \ n\geq N.\]
   \end{itemize}
 (3)  $b$ satisfies the condition:\\
   There exists a positive integer $p\geq 0$ and elements $a_1, a_2, \ldots  a_{2p+1}\in A$ such that \[b^n+\sum\limits_{i=1}^{n} (-1)^i\binom{n}{i} a_ib^{n-i}=0 \, , for  \ p+1\leq n\leq 2p+1.\] 
  
\end{theorem}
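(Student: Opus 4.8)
\noindent\emph{Proof idea.} The plan is to prove the cycle $(3)\Rightarrow(1)\Rightarrow(2)\Rightarrow(3)$, after first settling the internal equivalence of the four conditions grouped under $(2)$. For that equivalence, set $c_0:=1$ and $c_j:=0$ for $j>p$, and write $\Phi(n):=\sum_{i\ge 0}\binom{n}{i}c_i b^{n-i}$, so that each bullet asserts $\Phi(n)\in A$ for a prescribed set of exponents $n$. Pascal's rule gives the identity $\Phi(n+1)-b\Phi(n)=\sum_{j\ge 0}\binom{n}{j}c_{j+1}b^{n-j}$: the operator $\psi\mapsto\psi(n+1)-b\,\psi(n)$ sends an expression of this shape with top index $p$ to one with top index $p-1$, so $p+1$ applications annihilate $\Phi$. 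Thus $\Phi$ satisfies an order-$(p+1)$ linear recursion in $b$, and membership $\Phi(n)\in A$ over a long enough block of consecutive integers propagates upward by the recursion and, after clearing suitable powers of $b$, downward to all $n\ge 1$; specializing the block to $N\le n\le 2N+2p-1$, and in particular to $N=1$, gives the equivalence of the first three bullets, while the fourth is brought to this form by multiplying its relation by $b^{s}$ and re-indexing.

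For $(3)\Rightarrow(1)$: the $n=2p+1$ instance of the relation in $(3)$ is monic of degree $2p+1$ over $A$, so $b$ is integral over $A$ and $A[b]$ is module-finite over $A$; replacing $B$ by $A[b]$ is harmless. Weak subintegrality of $A\subset A[b]$ is local on $\Spec A$, and at a prime $\mathfrak p$ it asserts that $\kappa(\mathfrak p)\otimes_A A[b]=\kappa(\mathfrak p)[\bar b]$ is local with residue field purely inseparable over $k:=\kappa(\mathfrak p)$. Since the relations of $(3)$ base change to $k$, we are reduced to: if $\bar b$ generates $k[X]/(f)$ and $f\mid X^n+\sum_{i=1}^n(-1)^i\binom{n}{i}a_i X^{n-i}$ in $k[X]$ for $p+1\le n\le 2p+1$ with $a_i\in k$, then $f=(X-\alpha)^{\deg f}$ for some $\alpha\in\bar k$ purely inseparable over $k$. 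The point is that the $p+1$ divisibilities overdetermine the coefficients of $f$ and force all its roots in $\bar k$ to coincide (already for $p=1$ they give $a_2=a_1^2$ and $(\bar b-a_1)^2=0$); the common root, being a zero of a polynomial over $k$, is then automatically purely inseparable over $k$.

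For $(1)\Rightarrow(2)$: by Yanagihara's description \cite{Yanagihara1985} of the weak normalization, $b$ lies in a subring of $B$ obtained from $A$ by finitely many elementary subintegral extensions ($t^2,t^3$ in the preceding ring) and elementary weakly subintegral extensions ($t^{\ell},\ell t$ in the preceding ring, $\ell$ prime). I would induct on the number of steps, proving the stronger assertion that \emph{every} element of the ring produced at each stage admits a relation of the form in the second bullet of $(2)$ over $A$. The inductive step requires, given $R\subseteq R[t]$ elementary of one of the two kinds and the assertion for all of $R$, an explicit formula expressing a $\Phi$-type relation for an arbitrary $r_0+r_1 t+\cdots+r_{\ell-1}t^{\ell-1}$ in terms of those for the $r_j$, using $t^{\ell}\in R$ and, in the weakly subintegral case, $\ell t\in R$; here the binomial coefficients are exactly what keeps the combination in $A$, the divisions by $\ell$ forced by $\ell t\in R$ being absorbed by $\ell\mid\binom{n}{i}$ for appropriate $i$.

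For $(2)\Rightarrow(3)$: given $c_i\in B$ with $\Phi(n)\in A$ for $1\le n\le 2p+1$, solve the relations $n=1,\dots,p$ for $c_1,\dots,c_p$; their coefficient matrix $(\binom{n}{i}b^{n-i})_{1\le i,n\le p}$ is unitriangular, hence invertible over $A[b]$, so each $c_i\in A[b]$. Substituting these into the relations for $p+1\le n\le 2p+1$ produces monic polynomial identities for $b$ over $A$, which after renormalizing — the leading scalar equals $\sum_{i=0}^{p}(-1)^i\binom{n}{i}=(-1)^p\binom{n-1}{p}$ and must be cleared against the remaining relations — take the alternating-binomial form of $(3)$; this closes the cycle. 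I expect the combinatorial core to be the main obstacle: the recursion/propagation argument of the first paragraph, the renormalization of the last, and above all the elementary weakly subintegral case of $(1)\Rightarrow(2)$, where one must produce the $c_i$ by explicit binomial identities that keep every partial sum inside $A$ without ever dividing — which is also why no hypothesis such as $\mathbb{Q}\subset A$ enters.
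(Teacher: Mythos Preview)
The paper does not prove this theorem: it is quoted verbatim as \cite[Theorem~6.10]{Reid Roberts Singhweak} and used as a black box (specifically in the proof of Theorem~\ref{twkly1}). There is therefore no proof in the present paper to compare your proposal against; the argument lives entirely in the cited 1996 paper of Reid, Roberts and Singh.

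That said, a brief comment on your sketch. Your overall architecture---recursion via Pascal's rule for the equivalence of the four bullets in (2), reduction to residue fields for $(3)\Rightarrow(1)$, induction along Yanagihara's elementary tower for $(1)\Rightarrow(2)$, and elimination of the $c_i$ for $(2)\Rightarrow(3)$---matches the strategy of the original paper. Two places deserve more care. In $(3)\Rightarrow(1)$ you assert that the $p+1$ divisibility conditions force all roots of $f$ in $\bar k$ to coincide, but you give no mechanism; this is precisely the nontrivial content, and your parenthetical $p=1$ check does not indicate how the general case goes. In $(2)\Rightarrow(3)$, solving the unitriangular system expresses each $c_i$ as an element of $A[b]$, not of $A$; substituting back into $\Phi(n)\in A$ for $p+1\le n\le 2p+1$ gives an element of $A$, not an identity equal to zero, so you still have to explain how the $p+1$ membership conditions combine to yield the $p+1$ \emph{equations} of (3) with coefficients $a_i\in A$---your ``renormalization'' remark does not do this. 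These are exactly the combinatorial steps that the original Reid--Roberts--Singh paper spends effort on.
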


 

\begin{proposition}
Let $A$ be a reduced ring and  $M\subset N$ an extension of monoids. Then $A[M]\subset A[N]$ is a subintegral extension
if and only if $M\subset N$ is a subintegral extension.
\end{proposition}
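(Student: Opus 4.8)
The plan is to prove both directions using Proposition~\ref{Prop8}, which gives the combinatorial characterization: $M \subset N$ is subintegral if and only if for every $z \in N$ we have $z^j \in M$ for all $j \gg 0$. The $(\Leftarrow)$ direction is the routine one. Assume $M \subset N$ is subintegral; we want $A[M] \subset A[N]$ subintegral. Since $N = \bigcup N_\lambda$ is a filtered union with each $N_\lambda$ obtained from $M$ by finitely many elementary subintegral monoid extensions, it suffices to handle a single elementary extension $N = M \cup xM$ with $x^2, x^3 \in M$. Here $A[N]$ is generated over $A[M]$ by the single element $x$, and $x^2, x^3 \in A[M]$, so $A[M] \subset A[M][x] = A[N]$ is an elementary subintegral ring extension. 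Passing back through the finite succession and then taking the filtered union of subrings of $A[N]$ shows $A[M] \subset A[N]$ is subintegral. (Note this direction does not even need $A$ reduced.)

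For $(\Rightarrow)$, suppose $A[M] \subset A[N]$ is subintegral; I want to show $M \subset N$ is subintegral, i.e. that every $z \in N$ satisfies $z^j \in M$ for $j \gg 0$. Fix $z \in N$. Since $z \in A[N]$ is subintegral over $A[M]$, it is in particular weakly subintegral over $A[M]$, so by Theorem~\ref{Weakly Subintegrality Criterion} there exist $c_1,\dots,c_p \in A[N]$ with
\[
z^n + \sum_{i=1}^p \binom{n}{i} c_i z^{n-i} \in A[M] \quad \text{for all } n \ge 1.
\]
The idea is to extract monomial information from this identity. The monomial $z^n$ lies in the $A$-submodule of $A[N]$ spanned by $z^n$, while the correction terms $\binom{n}{i} c_i z^{n-i}$ are supported on monomials $z^{n-i} w$ where $w$ ranges over the (finite) support of $c_i$. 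I will argue that for $n$ large relative to the supports of the $c_i$, one can isolate the coefficient of the monomial $z^n$ itself: either $z^n$ already lies in $M$ (in which case we are done for that $n$), or $z^n \notin M$, and then the coefficient of $z^n$ on the left side must vanish after accounting for the contributions of the $c_i z^{n-i}$ terms that happen to produce $z^n$. Here reducedness of $A$ enters: I expect to use that the "leading" relation forces a monic integral-type equation for $z^n$ with coefficients in $A$, and in a reduced ring a monic polynomial relation $z^N(z^N - \text{lower}) = 0$-type argument, combined with $z$ being a basis element of a free module, pins down $z^n \in M$. Concretely I would compare, for $n$ and $n+1$ and $n+2$ etc., the equations above and use that $\binom{n}{i}$ varies with $n$ while the $c_i$ do not, to conclude $z^n \in A[M]$, hence $z^n \in M$ since $z^n$ is a single basis monoid element and $A[M]$ is the free $A$-submodule on $M$.

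The main obstacle is precisely this extraction step: turning a weak-subintegrality relation over the monoid algebra into the pointwise statement $z^j \in M$ for $j \gg 0$. The subtlety is that the monomials $z^{n-i} w$ occurring in $c_i z^{n-i}$ can collide with $z^n$ (if some $w = z^i$) or with each other, so one cannot naively read off coefficients. I would handle this by choosing $n$ large enough that, writing $\gp(N)$ for the group of fractions, the element $z^n$ is "far" from the bounded region swept out by the supports of $c_1,\dots,c_p$ — more precisely, by working in $\gp(N)$ and using a suitable positivity/length function or monoid ordering on the finitely generated submonoid generated by $z$ and the supports, I can guarantee that for all large $n$ the monomial $z^n$ appears in the relation only from the $z^n$ term and from a controlled finite set of $c_i z^{n-i}$ terms, after which reducedness of $A$ and a descending-induction-on-$i$ argument force $z^n \in M$. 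I would likely package the needed combinatorics as a short lemma about supports in $A[N]$, and this lemma (rather than the ring theory) is where the real work lies. Alternatively, if the author's Lemma~\ref{Lemma3} (the "crucial combinatorial lemma" promised in the introduction) is exactly such a statement, the cleanest route is to cite it; absent that, the self-contained argument sketched above via a monoid ordering on a finitely generated saturated submonoid of $\gp(N)$ is the fallback.
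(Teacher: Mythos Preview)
Your $(\Leftarrow)$ direction is fine and matches the paper's ``This direction is clear.''

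Your $(\Rightarrow)$ direction has a genuine gap. You pass from ``$z$ is subintegral over $A[M]$'' to ``$z$ is weakly subintegral over $A[M]$'' and then work only with the weak subintegrality criterion (Theorem~\ref{Weakly Subintegrality Criterion}). But this throws away exactly the information you need: if your coefficient-extraction argument could be completed using only weak subintegrality and reducedness of $A$, it would prove that for any reduced $A$, weak subintegrality of $A[M]\subset A[N]$ forces $M\subset N$ to be subintegral. That is false: take $A=\mathbb{F}_q$, $M=(x^q)$, $N=(x)$ as in Example~\ref{counter-example}. Here $A$ is reduced, $A[M]\subset A[N]$ is weakly subintegral, yet $x^j\in M$ only for $q\mid j$, so $M\subset N$ is not subintegral. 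Concretely, your ``vary $n$ and use that $\binom{n}{i}$ changes'' step is the Lemma~\ref{Lemma3} determinant argument, and that determinant is a positive \emph{integer}; it is useless in $\mathbb{F}_q$ where it may vanish. Your fallback ``monoid ordering'' idea does not help either: the collisions you worry about occur at the monomial $z^n$ itself (whenever some $c_i$ has support at $z^i$), and no largeness of $n$ separates them.

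The paper's proof avoids this entirely and is a two-line argument that actually uses the full strength of subintegrality (not just weak subintegrality). Let $M'$ be the subintegral closure of $M$ in $N$. By Lemma~\ref{Lm4} (Bruns--Gubeladze, Theorem~4.79), since $A$ is reduced, $A[M']$ is the subintegral closure of $A[M]$ in $A[N]$; the hypothesis says this closure is all of $A[N]$, so $A[M']=A[N]$, whence $M'=N$. Your approach via Lemma~\ref{Lemma3} is precisely what the paper uses to prove the stronger Theorem~\ref{twkly1}, but that theorem needs the extra hypothesis $\mathbb{Z}\subset A$, not merely $A$ reduced.
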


\begin{proof}
 $(\Leftarrow)$ This direction is clear.
 
 $(\Rightarrow)$ 
  Let $M'$ be the subintegral closure of $M$ in $N$. Then $M'$ is subintegrally closed in $N$. Hence, by Lemma \ref{Lm4}, 
 $A[M']$ is subintegrally closed in $A[N]$. Therefore $A[M']=A[N]$. Since $M'\subset N$ and $A[M']=A[N]$, we have $M'=N$. This proves that $M\subset N$ is a subintegral extension.
 $\hfill \gj$
\end{proof}
\vspace{.3cm}
\par
In the following, we generalize the above proposition by dropping the condition that the ring $A$ is reduced but we assume $\mathbb{Z}\subset A$. Interestingly, we prove that weak subintegrality and subintegrality are the same for the extension $A[M] \subset A[N]$. More precisely, we prove the following.

\begin{theorem} \label{twkly1}
Let $M\subset N$ be an extension of monoids.
Then the following are equivalent if $\BZ \subset A $, where $A$ is a ring.

$(1)$ The extension $A[M]\subset A[N]$ is subintegral.

$(2)$ The extension $A[M]\subset A[N]$ is  weakly subintegral.

$(3)$ The monoid extension $M\subset N$ is subintegral.
\end{theorem}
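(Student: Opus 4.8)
The plan is to prove the cycle of implications $(3)\Rightarrow(1)\Rightarrow(2)\Rightarrow(3)$. The implication $(1)\Rightarrow(2)$ is immediate, since every subintegral extension is weakly subintegral (as recalled in Definition~\ref{Weakly subintegral extensions}$(\ref{RingExtn:Item4})$), and requires no hypothesis on $A$. So the real content lies in $(3)\Rightarrow(1)$ and in $(2)\Rightarrow(3)$.

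For $(3)\Rightarrow(1)$: assume $M\subset N$ is subintegral. By Proposition~\ref{Prop8}, for every $z\in N$ we have $z^j\in M$ for all $j\gg 0$. I want to lift this to the ring extension $A[M]\subset A[N]$. Write $N=\bigcup_\lambda N_\lambda$ where each $N_\lambda$ is obtained from $M$ by finitely many elementary subintegral steps; then $A[N]=\bigcup_\lambda A[N_\lambda]$ is a filtered union, so by the definition of subintegral ring extension it suffices to handle the case where $N$ itself is a single elementary subintegral extension $N=M\cup xM$ with $x^2,x^3\in M$, and then to iterate. In that case $A[N]=A[M]+A[M]x = A[M][x]$ with $x^2,x^3\in A[M]$, which is exactly an elementary subintegral ring extension. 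Chaining these gives that $A[M]\subset A[N]$ is a filtered union of iterated elementary subintegral extensions, hence subintegral. (Note $\mathbb{Z}\subset A$ is not needed here either; it is only needed for $(2)\Rightarrow(3)$.)

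For $(2)\Rightarrow(3)$, the crux: assume $A[M]\subset A[N]$ is weakly subintegral, with $\mathbb{Z}\subset A$. I must show $M\subset N$ is subintegral, i.e.\ by Proposition~\ref{Prop8} that every $z\in N$ satisfies $z^j\in M$ for $j\gg 0$. Fix $z\in N$ and regard $t:=z$ as a monomial element of the monoid algebra $A[N]$. Since $A[M]\subset A[N]$ is weakly subintegral and $t\in A[N]$, $t$ is weakly subintegral over $A[M]$, so by Theorem~\ref{Weakly Subintegrality Criterion} there exist $c_1,\dots,c_p\in A[N]$ with
\[
t^n+\sum_{i=1}^p\binom{n}{i}c_i\,t^{n-i}\ \in\ A[M]\qquad\text{for all }n\ge 1.
\]
Now I exploit the grading/monomial structure: $t^n=z^n$ is the single monomial $z^n$ (a basis element of $A[N]$), and each $c_i t^{n-i}$ is a finite $A$-linear combination of monomials of the form $(\text{monomial of }c_i)\cdot z^{n-i}$. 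The idea is to isolate, in the displayed equation, the coefficient of the pure monomial $z^n$: on the left it is $1+\sum_i \binom{n}{i}(\text{coefficient of }z^i\text{ in }c_i)$, and this scalar, times the basis vector $z^n$, must—together with the requirement that the whole sum lie in $A[M]$—force $z^n\in M$ once the binomial coefficients $\binom{n}{i}$ for $1\le i\le p$ are invertible in $A$; this is where $\mathbb{Z}\subset A$ (equivalently $\mathbb{Q}$ after inverting, but actually we only need the relevant integers invertible—since $\mathbb{Z}\subset A$ means $A$ is a $\mathbb{Z}$-algebra, and in characteristic $0$ the $\binom{n}{i}$ are nonzerodivisors) enters decisively. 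More precisely, I expect to argue: choosing $n$ large and writing $c_i=\sum_{w}c_{i,w}w$, the monomial $z^n$ appears on the left only from the term $t^n$ and from those terms $c_{i,w}w z^{n-i}$ with $w z^{n-i}=z^n$, i.e.\ $w=z^i$; collecting these, the coefficient of $z^n$ in the left side is $\big(1+\sum_{i=1}^p\binom{n}{i}c_{i,z^i}\big)z^n$ where I am abusing notation—$c_{i,z^i}\in A$. For the sum to lie in $A[M]$ while $z^n\notin M$ would require this scalar to vanish; but a short combinatorial argument (varying $n$, or directly from Theorem~\ref{Weakly Subintegrality Criterion}'s finitely-many-$n$ version and the combinatorial Lemma~\ref{Lemma3} advertised in the introduction) shows it cannot vanish for all large $n$ when $A$ contains $\mathbb{Z}$. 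Hence $z^n\in M$ for all $n\gg 0$, and Proposition~\ref{Prop8} gives that $M\subset N$ is subintegral.

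The main obstacle is this last step: cleanly extracting combinatorial information about the monoid $N$ from an equation that holds in the monoid \emph{algebra} $A[N]$. Naively comparing coefficients of $z^n$ is delicate because the other monomials appearing in $c_i t^{n-i}$ may themselves collapse onto $z^n$ for various reasons, and because $t^{n-i}$ ranges over a shifting family as $n$ grows. I expect the paper handles this via the preparatory combinatorial Lemma~\ref{Lemma3} (referenced in the introduction as ``a crucial combinatorial lemma''), which presumably controls, for a fixed monomial $z$ and varying exponents, when $\mathbb{Z}$-linear combinations of the form $1+\sum\binom{n}{i}(\cdots)$ can vanish identically in $n$—ruling this out in characteristic $0$. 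Granting that lemma, the monomial-coefficient bookkeeping becomes routine and the theorem follows; the failure in the $\mathbb{F}_p$ case (Example~\ref{counter-example}) is precisely that these binomial coefficients can vanish mod $p$, breaking the argument.
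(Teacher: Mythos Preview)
Your proposal is correct and follows essentially the same route as the paper: the implication $(2)\Rightarrow(3)$ is proved exactly by your coefficient-extraction idea, and Lemma~\ref{Lemma3} is indeed the ``crucial combinatorial lemma'' that finishes it. Two small clarifications dispel your hedging. First, your worry about monomials ``collapsing onto $z^n$'' is unfounded because $N$ is cancellative: $w\cdot z^{n-i}=z^n$ forces $w=z^i$, so the $z^n$-component of $z^n+\sum_i\binom{n}{i}c_iz^{n-i}$ is exactly $\bigl(1+\sum_i\binom{n}{i}c_i'\bigr)z^n$ with $c_i'\in A$ the coefficient of $z^i$ in $c_i$, and if $z^n\notin M$ this scalar must vanish. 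Second, the precise role of $\mathbb{Z}\subset A$ is that if this scalar vanished for $p+1$ distinct values of $n$, the adjugate identity applied to the matrix of Lemma~\ref{Lemma3} would give $d\cdot 1=0$ in $A$ with $d$ a positive integer, contradicting $\mathbb{Z}\subset A$; hence the scalar vanishes for at most $p$ values of $n$ and $z^n\in M$ for all $n\gg 0$. For $(3)\Rightarrow(1)$ your direct lifting of elementary monoid extensions to elementary ring extensions is in fact more transparent than the paper's one-line citation.
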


Before we delve into proving the theorem, we need the following technical lemma.

\begin{lemma}
If we choose $p+1$ distinct integers $0<a_1<a_2<\cdots <a_{p+1}$ then the
 matrix $M$ whose $i$-th row is  $\{ \binom{a_i}{0},\binom{a_i}{1},\ldots,\binom{a_i}{p}\}$,
 $1\leq i \leq p+1$ has determinant $\prod\limits_{1\leq i<j\leq p+1}(a_j-a_i)/p!(p-1)!\ldots 2!$, which is a positive integer.
 \label{Lemma3}
\end{lemma}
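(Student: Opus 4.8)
The plan is to recognize the matrix $M$ as a change of basis between two natural bases of the space of polynomials of degree $\le p$, namely the monomial basis $\{1,x,x^2,\dots,x^p\}$ and the binomial-coefficient basis $\{\binom{x}{0},\binom{x}{1},\dots,\binom{x}{p}\}$ evaluated at the points $a_1,\dots,a_{p+1}$, and thereby reduce the determinant of $M$ to a Vandermonde determinant. Concretely, write $\binom{x}{k}=\frac{1}{k!}x(x-1)\cdots(x-k+1)$, so the matrix $B$ whose $(i,k)$ entry is $\binom{a_i}{k}$ (for $0\le k\le p$) factors as $B = V\cdot T$, where $V$ is the Vandermonde matrix with $(i,k)$ entry $a_i^{k}$ and $T$ is the upper-triangular matrix expressing each $\binom{x}{k}$ in the monomial basis. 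The diagonal entry of $T$ in column $k$ is the leading coefficient $1/k!$ of $\binom{x}{k}$, so $\det T = \prod_{k=0}^{p} \frac{1}{k!} = \frac{1}{p!\,(p-1)!\cdots 2!\,1!\,0!} = \frac{1}{p!(p-1)!\cdots 2!}$.

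First I would make precise the triangular factorization: since $\binom{x}{k}$ is a polynomial of degree exactly $k$ with leading coefficient $1/k!$, the coefficient of $a_i^{k}$ in $\binom{a_i}{k}$ contributes the diagonal, and $\binom{a_i}{k}$ involves no power $a_i^{m}$ with $m>k$, which is exactly the statement that $T$ is upper triangular. Then $\det M = \det B = \det V \cdot \det T = \left(\prod_{1\le i<j\le p+1}(a_j-a_i)\right)\cdot \frac{1}{p!(p-1)!\cdots 2!}$, using the standard Vandermonde formula for $\det V$ (here $\det V = \prod_{i<j}(a_j - a_i)$ with the convention matching the row/column ordering; any sign discrepancy from ordering is absorbed, and since $0<a_1<\cdots<a_{p+1}$ the product $\prod_{i<j}(a_j-a_i)$ is positive). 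This establishes the displayed formula.

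It remains to check that this rational number is in fact a positive integer. Positivity is immediate since all factors $a_j - a_i$ with $i<j$ are positive and the denominator is positive. For integrality, the cleanest route is to observe that $M$ is an integer matrix — each entry $\binom{a_i}{k}$ is a binomial coefficient of a nonnegative integer, hence an integer — so $\det M \in \mathbb{Z}$; combined with the formula just derived, $\prod_{i<j}(a_j-a_i)/\bigl(p!(p-1)!\cdots 2!\bigr)$ is a positive integer. (Alternatively one can argue directly: grouping the product over columns, $\prod_{i<j}(a_j-a_i) = \prod_{k=1}^{p}\prod_{i\le k}(a_{k+1}-a_i)$, and for fixed $k$ the product of $k$ consecutive-index differences is divisible by $k!$ by the integrality of a suitable binomial-type coefficient; but invoking $\det M\in\mathbb Z$ is shorter.)

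I do not expect a serious obstacle here; the only point requiring a little care is bookkeeping the row/column ordering so that the Vandermonde sign comes out positive, which is handled by the hypothesis $0<a_1<\cdots<a_{p+1}$, and making sure the factor $1/1!$ (from $k=1$) and $1/0!$ (from $k=0$) are correctly seen to contribute nothing to the stated denominator $p!(p-1)!\cdots 2!$.
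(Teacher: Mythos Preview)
Your proposal is correct and takes essentially the same approach as the paper: both reduce $\det M$ to a Vandermonde determinant via the upper-triangular change of basis from the binomial polynomials $\binom{x}{k}$ to the monomials $x^k$ (the paper phrases this as explicit column operations rather than a factorization $M=VT$, but the content is identical), and both obtain integrality by observing that $M$ has integer entries.
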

\begin{proof}
Let $M$ be the matrix whose $j$-th column is given by $\left\{ \binom{a_1}{j},\binom{a_2}{j},\ldots,\binom{a_{p+1}}{j}\right\}$ for $0\leq j\leq p$. Since each of the entries of $M$ is an integer, we have $\det(M)$ is an integer. Also note that column $0$ has  entries $1$ and column $1$ has  entries $a_{i}$, $1\leq i\leq p+1$. Let us apply elementary column operations to the matrix

   \begin{center}
    $ M = \begin{bmatrix}
           1 & a_1 & \frac{a_1(a_1-1)}{2!} &\ldots & \frac{a_1(a_1-1)\ldots (a_1-(j-1))}{j!} & \ldots & \frac{a_1(a_1-1) \ldots (a_1-(p-1))}{p!}\\
            1 & a_2 & \frac{a_2(a_2-1)}{2!} &\ldots & \frac{a_2(a_2-1)\ldots (a_2-(j-1))}{j!} & \ldots & \frac{a_2(a_2-1) \ldots (a_2-(p-1))}{p!}\\
           \vdots & \vdots & \vdots & \ddots  & \vdots & \ddots &    \vdots  \\
            1 & a_{p+1} & \frac{a_{p+1}(a_{p+1}-1)}{2!} &\ldots & \frac{a_{p+1}(a_{p+1}-1)\ldots (a_{p+1}-(j-1))}{j!} & \ldots & \frac{a_{p+1}(a_{p+1}-1) \ldots (a_{p+1}-(p-1))}{p!}\\
          \end{bmatrix}$ \vspace{.5cm}
          $2\leq j\leq p$ $\Big\downarrow$ $C_{j}' = C_{j}-\sum \limits_{i=0}^{j-1}(i-1)!C_{i-1} $
          \\ \vspace{.5cm}
           $\begin{bmatrix}
           1 & a_1 & \frac{a_1^2}{2!} &\ldots & \frac{a_1^j}{j!} & \ldots & \frac{a_1^p}{p!}\\
            1 & a_2 & \frac{a_2^2}{2!} &\ldots & \frac{a_2^j}{j!} & \ldots & \frac{a_2^p}{p!}\\
            \vdots & \vdots & \vdots & \ddots & \vdots &\ddots  & \vdots \\
            1 & a_{p+1} & \frac{a_{p+1}^2}{2!} &\ldots & \frac{a_{p+1}^j}{j!} & \ldots & \frac{a_{p+1}^p}{p!}\\
          \end{bmatrix} := W$
   \end{center}
 Then we get $\det(M) = \det(W) = \frac{\det(V)}{p!(p-1)!\ldots 2!}$, where $V$ is a Vandermonde matrix whose $j$-th column is  $\{a_1^j, a_2^j, \ldots , a_{p+1}^j\}$, $0\leq j\leq p$.  It is known that $V$ has a determinant $\prod \limits_{1\leq i< j\leq p+1 }(a_j-a_i)$. Hence we have $\det(M) =  \prod \limits_{1\leq i<j\leq p+1}\frac{(a_j-a_i)}{p!(p-1)!\ldots 2!}$, which is a positive integer.
$\hfill \gj$
\end{proof}

\vspace{.3cm}
{\bf Proof of Theorem \ref{twkly1}:}
\par
$(1)\Rightarrow (2)$ This is always true.
\par
$(2)\Rightarrow (3)$   Let $z\in N$. Then by Theorem \ref{Weakly Subintegrality Criterion} there exists a positive integer $p$ and elements $c_{1}$, $c_{2}$, \ldots , $c_{p}\in A[N]$ such that $z^{n} + \sum\limits_{i=1}^{p}\binom{n}{i}c_{i}z^{n-i}\in A[M]$, $n\gg 0$. 
$A[N]$ is a free $A$-module with basis $N$ and if we look at the $z^{n}$ component we can assume that  $c_{i} = c_{i}'z^{i}$ for $c_{i}'\in A$, $1\leq i\leq p$, yielding $z^{n} \left( 1 + \sum\limits_{i=1}^{p}\binom{n}{i}c_{i}'\right)\in A[M]$ for all $n\gg 0$. The elements 
$ 1 + \sum\limits_{i=1}^{p}\binom{n}{i} c_{i}'\in A$ can be $0$ for at most $p$ values of $n$ because if we choose any distinct $p+1$ values of $n$ the determinant of the coefficient matrix  $\left(\binom{n}{i}\right)$ is a non zero integer by Lemma \ref{Lemma3} . Therefore  $z^{n}\in M$ for all $n\gg 0$.

$(3)\Rightarrow (1)$  This follows from \ref{Lm4Coro}.
$\hfill \gj$

\begin{example}\label{counter-example}
Here we provide an example that shows that Theorem \ref{twkly1} is not true if $A$ is an $\mathbb{F}_{p}$-algebra, where $p$ is a prime number and $\mathbb{F}_p$ is a field with $p$-elements.
Consider the monoid extension $\mathbb{F}_{p}[M]\subset \mathbb{F}_{p}[N]$,
where $M=(x^p)$ and $N=(x)$. Then it is easy to see that
the above extension is weakly subintegral. However, it is not a subintegral extension. 
\end{example}

\begin{example}\label{counter-z}
We provide an example that shows that Theorem \ref{twkly1} is not true if $A$ is a $\mathbb{Z}$-algebra but not a monoid algebra.
Consider the $\mathbb{Z}$-algebra extension $\mathbb{Z}[2t,t^2] \subset \mathbb{Z}[t]$. Since $t^2, 2t \in \mathbb{Z}[2t, t^2]$,
the extension $\mathbb{Z}[2t,t^2]\subset \mathbb{Z}[t]$ is weakly 
subintegral. It follows from \cite[Theorem 3.3]{RR00} that the extension $\mathbb{Z}[2t,t^2]\subset \mathbb{Z}[t]$ is not subintegral.
\end{example}

\section{Invertible modules over generalized discrete Hodge algebras}\label{Sec_4}

Some more notation: \textbf{Rings}, \textbf{A-Mod} ($A$ is a ring), \textbf{AbGr}  denote the category of commutative rings, category of $A$-modules, and category of abelian groups respectively.

\subsection{Milnor square in the category of ring extension}

\begin{definition} \cite[Section $7.2$, Definition $2$] {BoschBook}
Let $\CC$ be a category. Let $X, Y, Z$ be  three objects in the category $\CC$ and let $q_1: X\longrightarrow Z$,  $q_2: Y\longrightarrow Z$ be morphisms in $\CC$. The Cartesian square (also called pullback or fiber product diagram) of objects $X$ and $Y$ over the object $Z$ is a triple $(P, p_1, p_2)$, where $P$ is an object in $\CC$, $p_1: P\longrightarrow X$ and $p_2: P\longrightarrow Y$  are morphisms such that $q_1\circ p_1 = q_2\circ p_2$ and the triple $(P, p_1, p_2)$
is universal in the sense that given any other triple $(P', p'_1, p'_2)$ of this kind
with $q_1\circ p'_1 = q_2\circ p'_2$ there is a unique morphism $h: P'\longrightarrow P$ such that $p_1\circ h = p'_1$ and $p_2\circ h = p'_2$. Pictorially, the following diagram
\begin{center}
\begin{tikzcd}
P' \arrow[rrd, "p'_1", bend left] \arrow[rdd, "p'_2"', bend right] \arrow[rd, "\exists ! h", dashed] &                                     &                    \\
   & P \arrow[d, "p_2"] \arrow[r, "p_1"] & X \arrow[d, "q_1"] \\
   & Y \arrow[r, "q_2"]                  & Z                 
\end{tikzcd}
\end{center}
is commutative in the category $\CC$.
\end{definition}

\begin{example}\label{LmFibProd_1}
Let $A$ be a commutative ring and let $I, J$ be two ideals of $A$. Then
\begin{center}
\begin{tikzcd}
A/({I\cap J}) \arrow[d,"p_2"] \arrow[r, "p_1"] & A/I \arrow[d, "q_1"] \\
A/J \arrow[r, "q_2"]                   & A/({I+J})      
\end{tikzcd}
\end{center}
with maps $p_1$, $p_2$, $q_1$, $q_2$ are natural, is a Cartesian square in the category $\textbf{Rings}$.\\
Further, if $M$ is an $A$-module then
\begin{center}
\begin{tikzcd}
M/({IM\cap JM}) \arrow[d,"p_2"] \arrow[r, "p_1"] & M/IM \arrow[d, "q_1"] \\
M/JM \arrow[r, "q_2"]                   & M/{(I+J)M}      
\end{tikzcd}
\end{center}
with maps $p_1$, $p_2$, $q_1$, $q_2$ are natural, is a Cartesian square in the category $\textbf{A-Mod}$.
Since the above two examples are well-known, we leave the details to the reader.
\end{example}

 \begin{lemma}\label{LmFibProdEqv}
 Let $\CC$ be $\textbf{Rings}$ or $\textbf{A-Mod}$ or $\textbf{AbGr}$  and let $X, Y, Z$ be three objects in $\CC$. Consider the commutative diagram
 \begin{center}
\begin{tikzcd}
P \arrow[d,"p_2"] \arrow[r, "p_1"] & X \arrow[d,"q_1"] \\
Y \arrow[r, "q_2"]                   & Z     
\end{tikzcd}
\end{center}
in  the category $\CC$. This diagram is a Cartesian square if and only if for each pair of elements $x$ in $X$ and $y$ in $Y$ with $q_1(x) = q_2(y)$, there is a unique element $u$ in $P$ with $p_1(u) = x$ and
$p_2(u) = y$.
 \end{lemma}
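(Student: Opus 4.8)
The statement is a completely elementary characterization of Cartesian squares in the three concrete categories \textbf{Rings}, \textbf{A-Mod}, \textbf{AbGr}, so the plan is to prove it by exhibiting the fiber product explicitly and invoking uniqueness of the universal object. First I would recall that in each of these categories the set-theoretic fiber product
\[
P_0 := \{(x,y)\in X\times Y \mid q_1(x)=q_2(y)\}
\]
carries a natural structure (componentwise operations) making the two projections $\pi_1\colon P_0\to X$, $\pi_2\colon P_0\to Y$ morphisms in $\CC$, and that $(P_0,\pi_1,\pi_2)$ satisfies the universal property of Definition~\ref{LmFibProd_1}'s ambient notion; this is the standard construction and I would state it with a one-line verification (given $(P',p_1',p_2')$ with $q_1p_1'=q_2p_2'$, the map $h(a)=(p_1'(a),p_2'(a))$ is the unique morphism into $P_0$ compatible with the projections). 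The key observation to extract is that $(P_0,\pi_1,\pi_2)$ has exactly the elementwise property in the statement: for each compatible pair $(x,y)$ the unique element of $P_0$ mapping to it is the pair $(x,y)$ itself.

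Next I would prove the two directions. For the ``only if'' direction: if the given square $(P,p_1,p_2)$ is Cartesian, then by uniqueness of the universal object there is an isomorphism $\Phi\colon P\iso P_0$ in $\CC$ with $\pi_1\Phi=p_1$ and $\pi_2\Phi=p_2$; since $\Phi$ is a bijection on underlying sets and $P_0$ has the elementwise property, so does $P$ — given compatible $(x,y)$, the element $u:=\Phi^{-1}(x,y)$ satisfies $p_1(u)=x$, $p_2(u)=y$, and any $u'$ with the same property has $\Phi(u')=(x,y)=\Phi(u)$, hence $u'=u$. For the ``if'' direction: suppose the square is commutative and has the stated elementwise property. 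Define $\Phi\colon P\to P_0$ by $\Phi(u)=(p_1(u),p_2(u))$; this lands in $P_0$ by commutativity $q_1p_1=q_2p_2$, and it is a morphism in $\CC$ because $p_1,p_2$ are. The elementwise hypothesis says precisely that $\Phi$ is bijective on underlying sets, hence an isomorphism in $\CC$ (in each of \textbf{Rings}, \textbf{A-Mod}, \textbf{AbGr}, a bijective morphism is an isomorphism). Transporting the universal property of $(P_0,\pi_1,\pi_2)$ along $\Phi$ shows $(P,p_1,p_2)$ is Cartesian.

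**Main obstacle.** There is no serious obstacle; the only points needing care are bookkeeping ones. First, one must be careful to note that the universal property in Definition~\ref{LmFibProd_1} determines the fiber product only up to unique isomorphism, so the argument genuinely goes through the explicit model $P_0$ rather than comparing two abstract objects directly. Second, one uses the specific feature of these three categories that the forgetful functor to \textbf{Sets} creates limits and that epi-mono (equivalently, bijective morphism implies isomorphism) holds — both standard, but worth a sentence since the lemma is false in a general category. I would therefore organize the write-up as: (1) construct $P_0$ and verify its universal property; (2) observe $P_0$ has the elementwise property and vice versa; (3) conclude both implications via the isomorphism $\Phi$ as above.
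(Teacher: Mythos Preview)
Your proposal is correct and complete. The paper itself does not give a proof of this lemma; it simply states ``This is easily proved and well-known'' and moves on, so your argument via the explicit model $P_0=\{(x,y)\in X\times Y\mid q_1(x)=q_2(y)\}$ is exactly the kind of verification the authors are leaving to the reader. One small correction: the universal property you invoke is stated in the unnumbered Definition preceding Example~\ref{LmFibProd_1}, not in Example~\ref{LmFibProd_1} itself, so adjust that cross-reference.
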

\begin{proof}
This is easily proved and well-known.
 $\hfill \gj$
\end{proof}

\begin{definition}({\bf Milnor square in the category of rings.})

The following Cartesian square of rings

\[
 \xymatrix{
  A \ar@{->}[r]^{p_1}\ar@{->}[d]^{p_2} & A_1  \ar@{->}[d]^{q_1}\\
   A_2 \ar@{->}[r]^{q_2} & A_3
   }
\]
is called a Milnor square if $q_1$ is a surjective ring homomorphism. For details, see page $14$ of \cite{CW2011}.

\end{definition}

Recall that morphisms in the category of ring extensions are defined in Definition \ref{Weakly subintegral extensions}(\ref{RingExtn:Item5}).
\begin{definition}({\bf Milnor square in the category of ring extensions})\label{Mil_Sqr_Ring_Ext}
 Let $A\subset B$, $A_1\subset B_1$, $A_2\subset B_2$ and $A_3\subset B_3$ be ring extensions
 such that the following squares  
\begin{equation}\label{eqn:AB}
\xymatrix{
 B\ar@{->}[r]^{p_1}          
     \ar@{->}[d]^{p_2}
&B_1 
     \ar@{->}[d]^{q_1}       && A\ar@{->}[r]^{p_{{1}_{|A}}}\ar@{->}[d]^{p_{{2}_{|A}}} & A_1 \ar@{->}[d]^{q_{{1}_{|A_1}}}
\\
B_2 \ar@{->}[r]^{q_2}
     &B_3                && A_2\ar@{->}[r]^{q_{{2}_{|A_2}}} & A_3 
}
\end{equation}
are Milnor squares, where the notation $f_{|A}$ means the restriction of $f$ to $A$ and the maps $p_1$, $p_2$, $q_1$, $q_2$ are morphisms in the category of ring extensions.

Then we call the following square 
\[
 \xymatrix{
  A\subset B\ar@{->}[r]^{p_1}\ar@{->}[d]^{p_2} & A_1\subset B_1 \ar@{->}[d]^{q_1}\\
   A_2\subset B_2\ar@{->}[r]^{q_2} & A_3\subset B_3
   }
\]
 a Milnor square in the category of ring extensions. 
 
\end{definition}

\begin{proposition} \label{Bass-Sequence}
Let  
\[
\xymatrix{
A\ar@{->}[r]^{p_1}         
     \ar@{->}[d]^{p_2}
&A_1 
     \ar@{->}[d] ^{q_1}     
\\
A_2 \ar@{->}[r]^{q_2}
     &A_3               
}
\]
be a Milnor square in the category of rings. We have the following
{\small
\[\xymatrix{ 1 \ar@{->}[r] &\U(A) \ar@{->}[r]^(.4){p} &\U(A_1)\oplus \U(A_2) \ar@{->}[r]^(.6){q} &\U(A_3) \ar@{->}[r]^{h}
&\Pic(A)\ar@{->}[r]^(.3){\alpha} &\Pic(A_1)\oplus \Pic(A_2) \ar@{->}[r]^(.6){\beta} &\Pic(A_3)
}
\]
}
 $6$-terms exact sequence of abelian groups, where $p(a)=(p_1(a), p_2(a))$, $q(a_1,a_2)=q_1(a_1)q_2(a_2)^{-1}$, $\alpha([P])=([P\tens{A}A_1], [P\tens{A}A_2])$, $\beta([P],[Q])= [P]\tens{A_3}[Q^{-1}]$ and the map $h$ is the connecting homomorphism.
\end{proposition}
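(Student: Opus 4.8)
The plan is to construct the Mayer--Vietoris type sequence directly from the Cartesian square, following the classical argument of Bass and Milnor for the $K_0$/$K_1$ exact sequence of a Milnor square. First I would recall that since $q_1$ is surjective, the square is cartesian both in \textbf{Rings} and (after restriction of scalars) in the relevant module categories, so by Lemma~\ref{LmFibProdEqv} an element of $A$ is precisely a compatible pair $(a_1,a_2)\in A_1\times A_2$ with $q_1(a_1)=q_2(a_2)$; this immediately gives exactness at $\U(A)$ and at $\U(A_1)\oplus\U(A_2)$ once one checks that $q(a_1,a_2)=q_1(a_1)q_2(a_2)^{-1}=1$ forces $(a_1,a_2)$ to come from a unit of $A$ (using that $A$ is the fibre product and that an element mapping to units on both sides is itself a unit in the fibre product). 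Exactness at $\U(A_3)$ and the definition of the connecting map $h$ is the standard patching construction: given $u\in\U(A_3)$, form the projective $A$-module $P(u)$ obtained by gluing $A_1$ and $A_2$ along multiplication by $u$ over $A_3$, i.e. $P(u)=\{(x_1,x_2)\in A_1\oplus A_2 : q_1(x_1)=u\,q_2(x_2)\}$, and set $h(u)=[P(u)]$.

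Next I would verify that $h$ is a well-defined group homomorphism: $P(u)\otimes_A P(u') \cong P(uu')$ by a direct check on generators (this is where the Milnor patching lemma, that $P(u)\otimes_A A_i$ is free of rank one over $A_i$, enters), and $P(u)$ is free over $A$ exactly when $u$ lifts to $\U(A_1)\times\U(A_2)$ modulo the image of $\U(A)$, giving exactness at $\U(A_3)$ and at $\Pic(A)$. For exactness at $\Pic(A)$ on the other side — that $\alpha([P])=0$ implies $[P]$ is in the image of $h$ — one uses that a rank-one projective $P$ with chosen trivializations $P\otimes_A A_i\cong A_i$ determines a unit $u\in\U(A_3)$ (the transition function), and $P\cong P(u)$; changing the trivializations changes $u$ by $\U(A_1)\times\U(A_2)$, which is the indeterminacy in $h$. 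Finally, exactness at $\Pic(A_1)\oplus\Pic(A_2)$: if $[P_1]\otimes[Q_1^{-1}]$ is trivial in $\Pic(A_3)$, one glues $P_1$ and $Q_1$ along the resulting isomorphism over $A_3$ to produce a rank-one projective over $A$ pulling back to the given pair; this again invokes Milnor patching to see the glued module is projective.

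I expect the main obstacle — or at least the most technical point — to be the Milnor patching lemma itself: that gluing a finitely generated projective $A_1$-module and a finitely generated projective $A_2$-module along an isomorphism of their base changes to $A_3$ yields a finitely generated projective $A$-module whose base changes recover the originals, and that this gluing is compatible with tensor products. In our rank-one situation this is exactly the statement that $P(u)\otimes_A A_i$ is free of rank one and $P(u)\otimes_A P(u')\cong P(uu')$, which requires the surjectivity of $q_1$ in an essential way (to lift idempotents / to produce splittings). Rather than reprove this, I would cite the standard references — for instance Milnor's \emph{Introduction to Algebraic $K$-theory} or \cite{CW2011} — and restrict attention throughout to rank-one modules, which keeps every computation explicit and avoids the general projective module machinery. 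The remaining diagram chases (well-definedness of $p,q,\alpha,\beta$, functoriality, and the six exactness statements) are then routine and I would present them compactly.
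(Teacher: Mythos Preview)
Your proposal is correct and sketches the classical Bass--Milnor argument in full. The paper, however, takes a different route in the most minimal sense possible: it gives no argument at all and simply cites \cite[Chapter IX, Theorem 5.3]{BassBook}. What you have outlined is essentially the proof one would find by following that citation (or the parallel treatment in Milnor or \cite{CW2011}), so your approach is not so much an alternative as an unpacking of the reference. If you were writing this up, a one-line citation would match the paper's level of detail; your longer sketch would be appropriate only if the surrounding exposition demanded a self-contained treatment.
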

\begin{proof}
 This follows from \cite[Chapter IX, Theorem 5.3]{BassBook}.
\end{proof}

\begin{notation}\label{notn:square}
 In the proof of the next lemma, by a $\psi$-square, we mean the following commutative diagram 
\[
 \xymatrix{
  G_1 \ar@{->}[r]^{\psi}\ar@{->}[d]^{\phi} & H_1  \ar@{->}[d]^{\alpha }\\
   G_2 \ar@{->}[r]^{\beta} & H_2
   }
\]
of abelian groups,  i.e., the diagram is named by its top arrow.
   
\end{notation}

\vspace{.3cm}
Recall that Milnor squares in the category of ring extensions are defined in Definition \ref{Mil_Sqr_Ring_Ext}, split surjective morphisms are defined in Definition \ref{Weakly subintegral extensions}(\ref{RingExtn:Item8}) and Example \ref{examplesplitsurjection} is an example of a split surjective morphism.

\begin{lemma} \label{7l1}
Let the following commutative diagram 
\begin{center}
\begin{tikzcd}
A\subset B \arrow[d, "p_2"] \arrow[r, "p_1"] & A_1\subset B_1 \arrow[d, "q_1"] \\
A_2\subset B_2 \arrow[r, "q_2"]              & A_3\subset B_3                 
\end{tikzcd}
\end{center}
be a Milnor square in the category of ring extensions, where $p_1$, $p_2$, $q_1$, $q_2$ are morphisms in the category of ring extensions and $q_1$ is a split surjective morphism. Then we have the following exact sequence of abelian groups
\begin{equation}
\label{3term exact seq}
      \begin{tikzcd}
     1\arrow{r}& \mathcal{I}(A,B)\arrow{r}{\phi}& \mathcal{I}(A_{1},B_{1})\oplus \mathcal{I}(A_{2}, B_{2})\arrow{r}{\psi} & \mathcal{I}(A_{3}, B_{3}) \arrow{r}{} & 1
     \end{tikzcd},
     \end{equation}
    where $\phi(M)= (\CI(p_1)(M), \CI(p_2)(M)) = 
    (p_1(M)A_1,p_2(M)A_2)$, 
    $\psi(M_1,M_2)= \CI(q_1)(M_1)(\CI(q_2)(M_2))^{-1}$, $M \in \CI(A,B)$, $M_1 \in \CI(A_1,B_1)$, and $M_2 \in \CI(A_2,B_2)$.
\end{lemma}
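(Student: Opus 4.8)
I want to construct the maps $\phi$ and $\psi$ explicitly, verify they are well-defined group homomorphisms, and then check exactness at the three terms of (\ref{3term exact seq}): injectivity of $\phi$, exactness at the middle, and surjectivity of $\psi$. The only genuinely new input over the classical Milnor--Bass machinery is handling the split-surjectivity hypothesis on $q_1$, which is what makes the sequence surject onto $\mathcal{I}(A_3,B_3)$ (the classical Bass sequence for $\mathcal{I}$ would continue with a term involving $\operatorname{Pic}$, and we want to chop it off).

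First I would set up notation: since $q_1$ is a split surjective morphism of ring extensions, fix a section $q_1': B_3\to B_1$ with $q_1\circ q_1'=\mathrm{Id}_{B_3}$ and $q_1'(A_3)\subset A_1$, $q_{1|A_1}\circ q_{1|A_3}' = \mathrm{Id}_{A_3}$. Because $q_1$ is surjective, the underlying ring square on the $B$'s is literally a fiber-product square (a Milnor square is Cartesian on rings), and by Lemma \ref{LmFibProdEqv} it is Cartesian in the elementwise sense; the same holds for the $A$-square. That elementwise description of $B = B_1\times_{B_3} B_2$ and $A = A_1\times_{A_3} A_2$ is the workhorse. For well-definedness of $\phi$, one uses the functoriality of $\mathcal{I}$ from Definition \ref{Weakly subintegral extensions}(\ref{RingExtn:Item5}): $\CI(p_1)$ and $\CI(p_2)$ are group homomorphisms, so $\phi$ is. For $\psi$, note $\CI(q_1),\CI(q_2)$ are homomorphisms into the abelian group $\mathcal{I}(A_3,B_3)$, so $(M_1,M_2)\mapsto \CI(q_1)(M_1)\cdot\CI(q_2)(M_2)^{-1}$ is a homomorphism. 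The composite $\psi\circ\phi$ is trivial because the ring square commutes: $\CI(q_1)\CI(p_1) = \CI(q_2)\CI(p_2)$ as functors, so $\CI(q_1)(p_1(M)A_1) = \CI(q_2)(p_2(M)A_2)$ in $\mathcal{I}(A_3,B_3)$.

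Next, the three exactness claims. \emph{Injectivity of $\phi$:} if $p_1(M)A_1 = A_1$ and $p_2(M)A_2 = A_2$, I want $M = A$. An invertible $A$-submodule $M\subset B$ with inverse $M^{-1}$ satisfies $MM^{-1}=A$; using the Cartesian description of $B$ and that $M$ becomes trivial after pushing to $B_1$ and $B_2$ (an invertible module killed by $\CI(p_i)$ pushes to the unit module), one shows $M\subset A_1\times_{A_3}A_2 = A$, hence $M=A$. The cleanest route is via Proposition \ref{Lmexactseq}: $\mathcal{I}(A,B)$ sits in the exact sequence with $\operatorname{U}$ and $\operatorname{Pic}$, and I can instead invoke the standard Milnor patching of invertible modules — an invertible $A$-submodule of $B$ is the same as a triple $(M_1,M_2,\text{gluing})$ with $M_i\in\mathcal{I}(A_i,B_i)$ and an isomorphism of their images in $\mathcal{I}(A_3,B_3)$ — which is where I expect the main technical work to lie. \emph{Exactness at the middle:} given $(M_1,M_2)$ with $\CI(q_1)(M_1) = \CI(q_2)(M_2)$ as submodules of $B_3$, glue them over $B_3$ to get $M := M_1\times_{B_3} M_2 \subset B_1\times_{B_3}B_2 = B$, check $M$ is an invertible $A$-submodule (here one uses that the $\mathcal{I}$-Milnor-patching theorem over rings — the analogue of \cite[Theorem 2.4]{Roberts and Singh} combined with \cite[Chapter IX]{BassBook} — holds, the split hypothesis not being needed for this step), and verify $\phi(M) = (M_1,M_2)$. \emph{Surjectivity of $\psi$:} given $M_3\in\mathcal{I}(A_3,B_3)$, use the section $q_1'$ to lift: set $M_1 := q_1'(M_3)A_1\in\mathcal{I}(A_1,B_1)$, which satisfies $\CI(q_1)(M_1) = \CI(q_1)\CI(q_1')(M_3) = M_3$ since $q_1\circ q_1' = \mathrm{Id}$; then $\psi(M_1, A_2) = M_3\cdot A_3^{-1} = M_3$. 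This last step is exactly where split-surjectivity is essential and where it enters cleanly; everything else is the Milnor square bookkeeping.

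**Main obstacle.** The hard part is the gluing/patching for invertible submodules: establishing that an invertible $A$-submodule of the fiber product $B = B_1\times_{B_3}B_2$ is equivalent to a compatible pair of invertible submodules over the factors (with matching images downstairs), and that the glued module $M_1\times_{B_3}M_2$ is genuinely invertible over $A = A_1\times_{A_3}A_2$ rather than merely a fractional-ideal-like object. This requires care because $\mathcal{I}(A,B)$ is defined via $A$-submodules of a fixed overring $B$, so one must track that the abstract patched module embeds back into $B$ and that its inverse does too; invoking the exact sequence of Proposition \ref{Lmexactseq} together with the six-term Milnor sequence of Proposition \ref{Bass-Sequence} for both the $\operatorname{U}$- and $\operatorname{Pic}$-levels, and then splicing, is probably the most economical way to avoid re-deriving patching by hand. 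I would organize the proof so that injectivity and middle-exactness are deduced from a diagram chase across the Roberts--Singh sequences for $(A,B)$, $(A_i,B_i)$ fitted into Proposition \ref{Bass-Sequence}, and only surjectivity of $\psi$ uses the splitting directly.
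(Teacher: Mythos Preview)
Your final plan---deducing injectivity of $\phi$ and middle-exactness by a diagram chase that splices the Roberts--Singh sequences of Proposition~\ref{Lmexactseq} for the four ring extensions into the Bass--Milnor sequences of Proposition~\ref{Bass-Sequence} for the $A$- and $B$-squares, and handling surjectivity of $\psi$ via the section $q_1'$---is exactly what the paper does. The paper builds a five-row commutative diagram (rows: $\U(A_i)$, $\U(B_i)$, $\CI$, $\Pic(A_i)$, $\Pic(B_i)$; columns are the Roberts--Singh sequences) and chases through it.

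One correction to your bookkeeping: the split surjectivity of $q_1$ is \emph{not} used only for surjectivity of $\psi$. It is also what breaks the six-term Bass sequence into two short exact sequences, making $\gamma:\U(A_1)\oplus\U(A_2)\to\U(A_3)$ surjective and $\alpha:\Pic(A)\to\Pic(A_1)\oplus\Pic(A_2)$ injective. Both of these facts are invoked in the diagram chase: surjectivity of $\gamma$ is needed in the middle-exactness argument (to lift a unit of $A_3$ back to $\U(A_1)\oplus\U(A_2)$), and injectivity of $\alpha$ is needed in the injectivity argument for $\phi$. So the splitting enters the proof in three places, not one. Your direct patching sketch ($M_1\times_{B_3}M_2$) would run into exactly this issue if you tried to avoid the splitting, since without it there is no reason the glued fractional ideal is invertible rather than just rank-one projective modulo a $\Pic$-obstruction.
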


\begin{proof}
  Since $q_1p_1=q_2p_2$, we observe that $\IIm(\phi)\subset \ker(\psi)$. For the reverse inclusion, first, we observe that 
  the $6$-term exact sequence in Proposition \ref{Bass-Sequence}  breaks into the following  
  two exact sequences
\[\xymatrix{ 1 \ar@{->}[r] &\U(A) \ar@{->}[r] &\U(A_1)\oplus \U(A_2) \ar@{->}[r] &\U(A_3) \ar@{->}[r] &1
}
\]

\[\xymatrix{ 1 \ar@{->}[r]
&\Pic(A)\ar@{->}[r] &\Pic(A_1)\oplus \Pic(A_2) \ar@{->}[r] &\Pic(A_3)\ar@{->}[r]&1
}
\]
 as $q_1$ is a split surjection (by abuse of notation we are writing $q_{1}: A_{1} \rightarrow A_{3}$ instead of $\left.q_{1}\right|_{A_{1}}: A_{1} \rightarrow A_{3}$).
 There are two similar exact sequences with $A$ 's replaced by $B$'s. Now we have the following diagram 
  
  \begin{center}
\begin{tikzcd}
 & 1 \arrow[d]                       & 1 \arrow[d]                                       & 1 \arrow[d]        \\

1 \arrow[r] & \U(A) \arrow[r, "\delta"] \arrow[d, "i_1"]                       & \U(A_1)\oplus \U(A_2) \arrow[r, "\gamma"] \arrow[d, "i_2"]                                       & \U(A_3) \arrow[r,] \arrow[d, "i_3"]       & 1 \\
1 \arrow[r] & \U(B) \arrow[r, "p"] \arrow[d, "\theta"]              & \U(B_1)\oplus \U(B_2) \arrow[r, "q"] \arrow[d, "{\widetilde{\theta} = (\theta_1, \theta_2)}"]       & \U(B_3) \arrow[r] \arrow[d, "\theta_3"] & 1 \\
            1\arrow{r} & {\mathcal{I}(A,B)} \arrow[d, "\cl"] \arrow[r, "\phi"] & {\mathcal{I}(A_1,B_1)\oplus \mathcal{I}(A_2,B_2)} \arrow[d, "{\widetilde{\cl} = (\cl_1, \cl_2)}"] \arrow[r] \arrow[r, "\psi"] & {\mathcal{I}(A_3,B_3)} \arrow[r, " "]\arrow[d, "\cl_3"] & 1  \\
1 \arrow[r] & \Pic(A) \arrow[r, "\alpha"] \arrow[d, "j_1"]            & \Pic(A_1)\oplus \Pic(A_2) \arrow[r, "\beta"] \arrow[d, "j_2"]                                     & \Pic(A_3) \arrow[d, "j_3"] \arrow{r} &1               &   \\
1 \arrow[r] & \Pic(B) \arrow[r, "\alpha'"]                          & \Pic(B_1)\oplus \Pic(B_2) \arrow[r, "\beta'"]                                                   & \Pic(B_3) \arrow{r}& 1                               &  
\end{tikzcd}

  \end{center}
where maps are defined by $p(b)=(p_1(b), p_2(b))$, $p|_{\U(A)}$ is the restriction of $p$ to $\U(A)$, $q(b_1, b_2) = q_1(b_1)(q_2(b_2))^{-1}$, $\gamma(a_1, a_2) = q_1(a_1)(q_2(a_2))^{-1}$,
$\cl(P)= [P]$, $j_1([P])= [P\tens{A}B]$. Similarly we can define the maps $\widetilde{\cl}$, $\cl_3$, $j_2$, $j_3$ and $\alpha$, $\beta$, $\alpha'$, $\beta'$ are defined as in Proposition \ref{Bass-Sequence}. All squares are commutative because all maps are functorial and all columns are exact sequences because of Propositions \ref{Lmexactseq}. The fact that $q_1$ is a  split surjective morphism makes it clear that all rows are exact except the third one.

Let $x\in \ker(\psi)$, which implies that $\cl_3(\psi(x))=1\in \Pic(A_3)$. By commutativity of the $\psi$-square (refer to Notation \ref{notn:square}) we get $\beta(\widetilde{\cl}(x))=1\in \Pic(A_3)$. By  the exactness of the $4$-th row there exists $z\in \Pic(A)$ such that  $\alpha(z)=\widetilde{\cl}(x)$. By commutativity of the $\alpha$-square we have $\alpha^{\prime} j_{1}=j_{2} \alpha$. Therefore $\alpha^{\prime} j_{1}(z)=j_{2} \alpha(z)=j_{2} \tilde{c}(x)=1$. By injectivity of $\alpha^{\prime}$ we have $j_{1}(z)=$ $1 \in \operatorname{Pic}(B)$. By exactness of the first column there exists $t \in \mathcal{I}(A, B)$ such that $\cl(t)=z$. By commutativity of the $\phi$-square $\tilde{\cl} \phi=\alpha \cl$ so $\tilde{\cl} \phi(t)=\alpha \cl(t)=\alpha(z)=\tilde{\cl}(x)$. Therefore $\phi(t)$ and $x$ have the same image under $\tilde{\cl}$ so by exactness of the second column there exists $y^{\prime} \in \mathrm{U}\left(B_{1}\right) \oplus \mathrm{U}\left(B_{2}\right)$ such that $\tilde{\theta}\left(y^{\prime}\right)=x \phi(t)^{-1}$.
Let $z^{\prime}=q\left(y^{\prime}\right) \in \mathrm{U}\left(B_{3}\right)$. By commutativity of the $q$-square $\theta_{3} q=\psi \tilde{\theta}$ so $\theta_{3}\left(z^{\prime}\right)=\theta_{3} q\left(y^{\prime}\right)=\psi \tilde{\theta}\left(y^{\prime}\right)=\psi\left(x \phi(t)^{-1}\right)=\psi(x)(\psi \phi(t))^{-1}=$ $1 \cdot 1=1 \in \mathcal{I}\left(A_{3}, B_{3}\right)$. By exactness of the third column, there exists $t^{\prime} \in \mathrm{U}\left(A_{3}\right)$ such that $i_{3}\left(t^{\prime}\right)=z^{\prime}$. Since $\gamma$ is onto, there exists $u \in$ $\mathrm{U}\left(A_{1}\right) \oplus \mathrm{U}\left(A_{2}\right)$ such that $\gamma(u)=t^{\prime}.$ By the commutativity of the $\gamma$-square we have $z^{\prime}=i_{3} \gamma(u)=q\left(i_{2}(u)\right)$. But by definition $z^{\prime}=q\left(y^{\prime}\right)$. So $y^{\prime}$ and $i_{2}(u)$ have the same image under $q$, which can be written $q\left(y^{\prime} i_{2}(u)^{-1}\right)=1.$ 
By exactness of the second row there exists $v \in \mathrm{U}(B)$ such that $p(v)=y^{\prime}\left(i_{2}(u)\right)^{-1}$. Now set $w=\theta(v)$. By commutativity of the $p$ square $\tilde{\theta} p=\phi \theta$. Therefore $\phi(w)=\phi \theta(v)=\tilde{\theta} p(v)=\tilde{\theta}\left(y^{\prime}\left(i_{2}(u)\right)^{-1}\right)=\tilde{\theta}\left(y^{\prime}\right)\left(\tilde{\theta} i_{2}(u)\right)^{-1}=\tilde{\theta}\left(y^{\prime}\right)$. By definition of $y^{\prime}, \tilde{\theta}\left(y^{\prime}\right)=x \phi(t)^{-1}$. Thus $x=\phi(t) \phi(w)=\phi(t w)$ as we wished to show.

Now we prove $\phi$ is injective. For that let $x \in \mathcal{I}(A, B)$ with $\phi(x)=1$. Let $y=\cl(x)$. By commutativity of the $\phi$-square $\alpha(y)=1$,
and since $\alpha$ is injective $y=1$. By exactness of the first column there exists $z \in \mathrm{U}(B)$ such that $\theta(z)=x$. Let $t=p(z) \in \mathrm{U}\left(B_{1}\right) \oplus \mathrm{U}\left(B_{2}\right)$. By exactness of the second row, $q(t)=q p(z)=1$. By commutativity of the $p$-square $\tilde{\theta} p=\phi \theta$ so $\tilde{\theta}(t)=\tilde{\theta} p(z)=\phi \theta(z)=\phi(x)=1$. By exactness of the second column there exists $u \in \mathrm{U}\left(A_{1}\right) \oplus \mathrm{U}\left(A_{2}\right)$ such that $i_{2}(u)=t$. By commutativity of the $\gamma$-square we have $i_{3} \gamma=q i_{2}$ so $i_{3} \gamma(u)=q i_{2}(u)=q(t)=1$. Since $i_{3}$ is an inclusion $\gamma(u)=1$. By exactness of the first row there exists $w \in \mathrm{U}(A)$ so that $\delta(w)=u$. By commutativity of the $\delta$-square we have $i_{2} \delta=p i_{1}$ so $t=i_{2}(u)=$ $i_{2} \delta(w)=p i_{1}(w)$. But also $p(z)=t$ and $p$ is an inclusion so $z=i_{1}(w)$. By exactness of the first column $x=\theta(z)=\theta i_{1}(w)=1$ as we wished to show.

Now, it remains to show that the map $\psi$ is surjective. Since the map $q_1$ is a split surjection, there exists a map $q_1': (A_3, B_3)\rightarrow (A_1, B_1)$ such that $q_1\circ q_1'=\Id_{(A_3, B_3)}$. By applying the functor $\CI$ we get $\CI(q_1)\circ \CI(q_1')=\Id_{\CI(A_3, B_3)}$, which gives the map $\CI(q_1): \CI(A_1, B_1)\rightarrow \CI(A_3,B_3)$ is surjective. Thus for any $J\in \CI(A_3,B_3)$, there there exists $M_1\in \CI(A_1,B_1)$ such that $\CI(q_1)(M_1)=J$. Then $\psi(M_1, A_2)=\CI(q_1)(M_1)(\CI(q_1)(A_2))^{-1}=J$, hence $\psi$ is surjective. This completes the proof.
$\hfill \gj$
\end{proof}

\begin{remark}
In the beginning, our claim was to prove that given a Milnor square in the category of ring extensions, line (\ref{3term exact seq}) in the statement of Lemma \ref{7l1} is an exact sequence but we prove this with the assumption that $q_1$ is a split surjection. 
The split surjection of $q_1$ needed to conclude that $\U(A_1)\longrightarrow \U(A_3)$ and $\U(B_1)\longrightarrow \U(B_3)$ are surjective and to prove $\psi$ is surjective.
The split surjection of $q_{1}$ also implies that the maps $\Pic\left(A_{1}\right) \rightarrow$ $\Pic\left(A_{3}\right)$ and $\Pic\left(B_{1}\right) \rightarrow \Pic\left(B_{3}\right)$ are surjective, hence $\beta$ and $\beta^{\prime}$ are surjective. However, we did not used these facts in the above proof.
\end{remark}

\subsection{Application of Milnor square for extension of monoid algebras}

\begin{lemma}\label{Lm2}
Let $A\subset B$ be an extension of reduced rings and $M\subset N$ an extension of affine monoids such that $A$ is subintegrally closed in $B$ and $M$ is subintegrally closed in $N$. Let $I$ be a radical ideal of $N$.
Then $\frac{A[M]}{(I\cap M)A[M]}$ is subintegrally closed in $\frac{B[N]}{IB[N]}$.
\end{lemma}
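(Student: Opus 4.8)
The plan is to reduce the statement about the quotient rings $\frac{A[M]}{(I\cap M)A[M]} \subset \frac{B[N]}{IB[N]}$ to the already-established fact (Corollary~\ref{Lm4Coro}) that $A[M]$ is subintegrally closed in $B[N]$ under the stated hypotheses. First I would fix an element $\bar b \in \frac{B[N]}{IB[N]}$ with $\bar b^2, \bar b^3 \in \frac{A[M]}{(I\cap M)A[M]}$ and aim to show $\bar b$ lies in $\frac{A[M]}{(I\cap M)A[M]}$. Lift $\bar b$ to some $b \in B[N]$; the hypotheses $\bar b^2,\bar b^3 \in \frac{A[M]}{(I\cap M)A[M]}$ translate into $b^2 \in A[M] + IB[N]$ and $b^3 \in A[M] + IB[N]$. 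The key structural observation is that $B[N]$, as an $A$-module (indeed as an abelian group), decomposes along the monoid grading by $N$: write $B[N] = \bigoplus_{x\in N} Bx$, and the ideal $IB[N] = \bigoplus_{x\in I} Bx$ is the span of the basis elements indexed by the ideal $I$, while $A[M] = \bigoplus_{x\in M} Ax$. Since $I$ is a radical ideal of $N$ and $M$ is subintegrally closed (hence in particular saturated-enough) in $N$, the relevant bookkeeping is: an element of $B[N]$ whose $N$-support meets $I$ contributes nothing to the quotient, and the complement $N\setminus I$ is a union of ``faces'' on which one can do the subintegral-closure argument.

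The main technical step is to pass from the congruence relations modulo $IB[N]$ to genuine membership relations. Decompose $b = b_0 + b_1$ where $b_0$ is the part of $b$ supported on $N\setminus I$ and $b_1$ the part supported on $I$; then $b_1 \in IB[N]$, so $\bar b = \bar b_0$, and it suffices to treat $b_0$. Now I would examine $b_0^2$ and $b_0^3$: since $I$ is an ideal, all cross terms between the support of $b_0$ in $N\setminus I$ and anything landing in $I$ stay in $IB[N]$; more precisely, $b_0^k = (\text{part in } N\setminus I) + (\text{part in } I)$, and the hypothesis says the $(N\setminus I)$-supported part of $b_0^k$ lies in $A[M]$ for $k=2,3$ — here one uses that $(I\cap M)A[M] = \bigoplus_{x\in I\cap M}Ax$ so that $A[M]/(I\cap M)A[M] = \bigoplus_{x\in M\setminus I}Ax$, and a monomial $cx$ with $x \in M\setminus I$ is detected faithfully in the quotient. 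The cleanest way to organize this is to note that $N\setminus I$, being the complement of a radical ideal, is a finite union of submonoids of the form $N_F$ (localizations/faces), and $M\setminus I$ the corresponding union for $M$; one then either works face by face, or better, chooses a monoid retraction. Concretely, since $I$ is radical, there is an idempotent-like projection onto the ``non-$I$'' part: the $A$-algebra surjection $B[N]\to B[N]/IB[N]$ admits an $A[M]$-compatible splitting as graded modules (not as rings), and this is exactly what lets one lift.

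With $b_0$ in hand satisfying $b_0^2 \in A[M] + (\text{stuff supported in } I)$ and similarly for $b_0^3$, I would argue that in fact $b_0^2 \in A[M]$ and $b_0^3 \in A[M]$ outright. This follows because the extra terms supported in $I$ that appear in $b_0^k$ are forced to be zero: $b_0$ is supported on $N\setminus I$, and if $b_0 = \sum c_x x$ with $x\in N\setminus I$, then $b_0^k = \sum_{x_1,\dots,x_k} c_{x_1}\cdots c_{x_k}\, x_1\cdots x_k$, and whenever a product $x_1\cdots x_k$ lands in $I$ we discard it — but the hypothesis on $\bar b$ only controls the sum modulo $IB[N]$, so to get honest membership I instead observe that the whole element $b_0^k$, viewed in $B[N]$, has its $(N\setminus I)$-component equal to something in $A[M]$ and we simply \emph{define} our candidate subintegral element using that component. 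The honest move: the reduced ring $B[N]/IB[N]$ is isomorphic to $B[N\setminus I]$ with its natural (non-unital-monoid-glued) multiplication only after care, so I would instead directly apply Corollary~\ref{Lm4Coro} to conclude $b_0 \in A[M] + IB[N]$, i.e. $\bar b_0 \in \frac{A[M]}{(I\cap M)A[M]}$, once I know $b_0^2, b_0^3 \in A[M] + IB[N]$ — and the passage from ``$\bar b^2,\bar b^3$ in the quotient subring'' to ``$b_0^2, b_0^3 \in A[M] + IB[N]$'' is immediate from the definitions. The hard part will be the combinatorial verification that the $N$-grading interacts correctly with a radical ideal $I$ — specifically that $A[M] + IB[N]$ pulls back correctly and that no monomial in $M\setminus I$ is accidentally identified with something in $I$ in the quotient — which is where the radicality of $I$ and the affineness of the monoids (ensuring $N\setminus I$ is well-behaved, a finite union of faces) are genuinely used.
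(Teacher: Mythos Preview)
Your proposal has a genuine gap at the step where you invoke Corollary~\ref{Lm4Coro}. After decomposing $b=b_0+b_1$ with $b_0$ supported on $N\setminus I$, you correctly obtain $b_0^2,\,b_0^3\in A[M]+IB[N]$. But Corollary~\ref{Lm4Coro} says that $A[M]$ is subintegrally closed in $B[N]$; to apply it you would need $b_0^2,b_0^3\in A[M]$, not merely $b_0^2,b_0^3\in A[M]+IB[N]$. These are different in general: when $I$ is not prime, $N\setminus I$ is not multiplicatively closed, so $b_0^2$ and $b_0^3$ typically have nonzero components supported on $I$, and there is no reason those components lie in $A[M]$. Equivalently, $B[N]/IB[N]$ is \emph{not} a monoid algebra over $B$ when $I$ is not prime, so the ring in which you want to test subintegral closedness is not of the form covered by Corollary~\ref{Lm4Coro}. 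Your own parenthetical ``non-unital-monoid-glued multiplication only after care'' identifies exactly this obstruction, but the sentence that follows it simply reasserts the desired conclusion rather than supplying the missing argument.

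You do gesture at the correct idea when you say one could ``work face by face,'' since $N\setminus I$ is a finite union of face submonoids. The paper's proof makes this precise: write $I=\mathfrak p_1\cap\cdots\cap\mathfrak p_n$ and induct on $n$. The base case $n=1$ is exactly your argument specialised to a prime ideal, where $N\setminus\mathfrak p_1$ \emph{is} a submonoid and Corollary~\ref{Lm4Coro} applies directly. For $n>1$ one sets $J=\mathfrak p_2\cap\cdots\cap\mathfrak p_n$ and uses the Cartesian square of ring extensions built from $J$ and $\mathfrak p_1$ (as in Example~\ref{LmFibProd_1}) to patch an element known by induction to lie in $\frac{A[M]}{(J\cap M)A[M]}$ and in $\frac{A[M]}{(\mathfrak p_1\cap M)A[M]}$ back to an element of $\frac{A[M]}{(I\cap M)A[M]}$. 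That patching step is the substantive content your sketch is missing.
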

\begin{proof}
Because $IB[N] \cap A[M]=(I \cap M) A[M]$, we have an inclusion $i_{I}: \frac{A[M]}{(I \cap M) A[M]} \hookrightarrow$ $\frac{B[N]}{I B[N]}$. Since $I$ is a radical ideal, by \cite[Proposition 2.36]{B-G}, we can assume that $I=\p_1\cap \ldots \cap \p_n$, where $\mathfrak{p_i}$'s are prime ideals in $N$. We prove the result by induction on $n$. 
\par
If $n = 1$, then $I = \mathfrak{p}_1$. Define $N' := N \setminus \mathfrak{p}_1$ and $M' := M \setminus (\mathfrak{p}_1 \cap M)$. Then $N'$ is a submonoid of $N$, and $M'$ is a submonoid of $M$. Moreover, since $M$ is subintegrally closed in $N$ by assumption, it follows that $M'$ is subintegrally closed in $N'$.
Observe that $\frac{A[M]}{(I \cap M) A[M]} \cong A\left[M^{\prime}\right]$ because both sides are the free $A$-module with basis those elements $m \in M$ that are not contained in $I \cap M$. Similarly we have $B\left[N^{\prime}\right] \cong \frac{B[N]}{I B[N]}$. By Corollary \ref{Lm4Coro} $A\left[M^{\prime}\right]$ is subintegrally closed in $B\left[N^{\prime}\right]$ so $A\left[M^{\prime}\right] \cong$ $\frac{A[M]}{(I \cap M) A[M]}$ is subintegrally closed in $B\left[N^{\prime}\right] \cong \frac{B[N]}{I B[N]}$. This establishes the case $n=1$.

 Assume that $n>1$. Let $J=\mathfrak{p}_{2} \cap \ldots \cap \mathfrak{p}_{n}$. Note that $J B[N]+$ $\mathfrak{p}_{1} B[N]=\left(J \cup \mathfrak{p}_{1}\right) B[N]$. Similarly $(J \cap M) A[M]+\left(\mathfrak{p}_{1} \cap M\right) A[M]=$ $\left(\left(J \cup \mathfrak{p}_{1}\right) \cap M\right) A[M]$. This leads to the following Milnor square in the category of ring extensions (as defined in Definition \ref{Mil_Sqr_Ring_Ext}):
\[
\xymatrix{
  \frac{A[M]}{(I\cap M)A[M]}\subset  \frac{B[N]}{IB[N]} \ar[r]^{p_1} \ar@<-2pt>[d]_{p_2} & \frac{A[M]}{(J\cap M)A[M]}\subset \frac{B[N]}{JB[N]} \ar@<-2pt>[d]_{q_1} \\
  \frac{A[M]}{(\p_1\cap M)A[M]} \subset  \frac{B[ N]}{\p_1B[N]} \ar[r]^(.38){q_2} & \frac{A[M]}{((J\cup \p_1)\cap M)A[M]} \subset  \frac{B[N]}{(J\cup \p_1)B[N]}}.
\]

This consists of two Milnor squares in the category of commutative rings, one to the left of the inclusions, and one to the right, with the inclusions $\subset$ giving a map between them. The left Milnor square is constructed as in Example \ref{LmFibProd_1} with $A$ in Example \ref{LmFibProd_1} replaced by $A[M], I$ replaced by $(J \cap M) A[M]$ and $J$ replaced by $\left(\mathfrak{p}_{1} \cap M\right) A[M]$. The right Milnor square is similarly constructed from Example \ref{LmFibProd_1} with $A$ replaced by $B[N], I$ replaced by $J B[N]$ and $J$ replaced by $\mathfrak{p}_{1} B[N]$. The upper left $\subset$ was given a name $i_{I}$, which is an inclusion $i_{I}: \frac{A[M]}{(I \cap M) A[M]} \hookrightarrow \frac{B[N]}{I B[N]}$. Similarly inclusions $i_{J}, i_{\mathfrak{p}_{1}}$, and $i_{J \cup P_{1}}$ are defined, each indicated by $\subset$ at the appropriate place in the diagram. At the coset level $i_{I}(a+(I \cap M) A[M])=a+I B[N]$ for some $a \in A[M]$. The existence or not of such an $a$ may not be apparent if we are given a coset $b+I B[N] \in \frac{B[N]}{I B[N]}$, and if $a$ exists it will not be unique. And as sets $(a+(I \cap M) A[M])$ and its image $a+I B[N]$ are different, the first being a copy of $(I \cap M) A[M]$ and the second of $I B[N]$. To simplify notation we may abusively write $f \in \frac{B[N]}{I B[N]}$ instead of $f+I B[N], f \in B[N]$. If $f \in \operatorname{Im}\left(i_{I}\right)$ it is convenient to write $f \in \frac{A[M]}{(I \cap M) A[M]}$ which will not mean that $f \in A[M]$, but rather that the coset $f+I B[N]$ contains an element of $A[M]$. The restriction of $p_{1}$ to $\frac{A[M]}{I \cap M) A[M]}$ will be denoted by $p_{1 \mid}$. Similarly we write $p_{2 \mid}, q_{1 \mid}$, and $q_{2\mid}$.

We wish to prove that $\frac{A[M]}{(I \cap M) A[M]}$ is subintegrally closed in $\frac{B[N]}{I B[N]}$. This will be the case if every element $f \in \frac{B[N]}{I B N}$ such that $f^{k} \in \frac{A[M]}{(I \cap M) A[M]}, k \geq 2$ is already in $\frac{A[M]}{(I \cap M) A[M]}$. So start with such an $f \in B[N]$ (variously thought of as $f \in \frac{B[N]}{I B[N]}$ or as the coset $f+I B[N]$). Then we have $p_{1}(f)=f+J B[N]$ and for $k \geq 2$ we have $p_{1}\left(f^{k}\right)=$ $p_{1}(f)^{k} \in \frac{A[M]}{(J \cap M) A[M]}$. By the induction hypothesis $\frac{A[M]}{(J \cap M) A[M]}$ is subintegrally closed in $\frac{B[N]}{J B[N]}$ so $p_{1}(f) \in \frac{A[M]}{(J \cap M) A[M]}$. Similarly $\frac{A[M]}{\left(\left(\mathfrak{p}_{1}\right) \cap M\right) A[M]}$ is subintegrally closed in $\frac{B[N]}{\mathfrak{p}_{1} B[N]}$ so $p_{2}(f) \in \frac{A[M]}{\left(\mathfrak{p}_{1} \cap M\right) A[M]}$. Therefore $p_{1}(f)=$ $f+J B[N]=a_{1}+J B[N]$ and $p_{2}(f)=f+\mathfrak{p}_{1} B[N]=a_{2}+\mathfrak{p}_{1} B[N]$, for some $a_{1}, a_{2} \in A[M]$. These must have the same image when mapped into the lower right corner of the right hand Milnor square so we must have $a_{1}+\left(J \cup p_{1}\right) B[N]=a_{2}+\left(J \cup \mathfrak{p}_{1}\right) B[N]$. Now we have $a_{1}-a_{2} \in A[M]$ and $a_{1}-a_{2} \in\left(J \cup \mathfrak{p}_{1}\right) B[N]$. But $A[M] \cap\left(J \cup \mathfrak{p}_{1}\right) B[N]=\left(\left(J \cup \mathfrak{p}_{1}\right) \cap M\right) A[M]$ so $a_{1}+\left(\left(J \cup \mathfrak{p}_{1}\right) \cap M\right) A[M]=a_{2}+\left(\left(J \cup \mathfrak{p}_{1}\right) \cap M\right) A[M]$. This means that $a_{1}+(J \cap M) A[M] \in \frac{A[M]}{(J \cap M) A[M]}$ and $a_{2}+\left(\mathfrak{p}_{1} \cap M\right) A[M] \in \frac{A[M]}{\left(\mathfrak{p}_{1} \cap M\right) A[M]}$ have the same image in the lower right hand corner of the left Milnor square. Therefore they patch in the left hand Milnor square to $g=$ $a+(I \cap M) A[M]$ such that $p_{1 \mid}(g)=a+(J \cap M) A[M]=a_{1}+(J \cap M) A[M]$ and $p_{2 \mid}(g)=a+\left(\mathfrak{p}_{1} \cap M\right) A[M]=a_{2}+\left(\mathfrak{p}_{1} \cap M\right) A[M]$. Now we use the inclusions to map this pull back situation to the right Milnor square. This yields $p_{1} i_{I}(g)=i_{J} p_{1\mid}(g)=i_{J}\left(a_{1}+(J \cap M) A[M]\right)=a_{1}+J B[N]=$ $p_{1}(f)$ and $p_{2} i_{I}(g)=i_{\mathfrak{p}_{1}} p_{2 \mid}(g)=i_{\mathfrak{p}_{1}}\left(a_{2}+\left(\mathfrak{p}_{1} \cap M\right) A[M]\right)=a_{2}+\mathfrak{p}_{1} B[N]=$ $p_{2}(f)$. By the unique pullback property of the right Milnor square we have $i_{I}(g)=f$ which implies that $f \in \frac{A[M]}{(I \cap M) A[M]}$ completing the proof.
$\hfill \gj$
\end{proof}

\begin{lemma}\label{Lm3}
Let $A\subset B$ be an extension of rings such that $A$ is subintegrally closed in $B$ and $I$ is a radical ideal of a monoid $M$. If $B$ is reduced, then $\frac{A[M]}{IA[M]}$ is subintegrally closed in $\frac{B[M]}{IB[M]}$ .
\end{lemma}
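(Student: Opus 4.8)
The plan is to deduce the statement from the affine case already established in Lemma \ref{Lm2}, by passing to a finitely generated submonoid of $M$. First I would record the elementary identity $IB[M]\cap A[M]=IA[M]$ (both monoid algebras are free modules on $M$, and $IB[M]$, $IA[M]$ are the submodules supported on the ideal $I$), so that the inclusion $A[M]\hookrightarrow B[M]$ induces an injection $\frac{A[M]}{IA[M]}\hookrightarrow\frac{B[M]}{IB[M]}$ and the assertion is meaningful. By the criterion in Definition \ref{Weakly subintegral extensions}(\ref{RingExtn:Item3}) it then suffices to take $f\in B[M]$ such that the classes $f^{2}+IB[M]$ and $f^{3}+IB[M]$ lie in $\frac{A[M]}{IA[M]}$ --- equivalently, there are $a_{2},a_{3}\in A[M]$ with $f^{2}-a_{2}\in IB[M]$ and $f^{3}-a_{3}\in IB[M]$ --- and to show that $f+IB[M]\in\frac{A[M]}{IA[M]}$.

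The core of the argument is the passage to a submonoid. Since $f$, $a_{2}$, $a_{3}$ involve only finitely many monomials, I would let $M_{0}\subseteq M$ be the submonoid generated by all the monomials occurring in them. Then $M_{0}$ is finitely generated, and as a submonoid of the cancellative torsion-free monoid $M$ it is cancellative and torsion-free, hence affine by Definition \ref{sec-2:monoid}(\ref{Monoid-Item5}). Put $I_{0}:=I\cap M_{0}$; this is an ideal of $M_{0}$, it is radical in $M_{0}$ (if $m\in M_{0}$ and $m^{n}\in I_{0}$ then $m^{n}\in I$, so $m\in I$ since $I$ is radical, whence $m\in I\cap M_{0}=I_{0}$), and one has $I_{0}B[M_{0}]=IB[M]\cap B[M_{0}]$ and $I_{0}A[M_{0}]=IA[M]\cap A[M_{0}]$. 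Now I would apply Lemma \ref{Lm2} with both monoids taken to be $M_{0}$ (so the monoid hypothesis, $M_{0}$ subintegrally closed in $M_{0}$, holds trivially), with the ring extension $A\subset B$ --- an extension of reduced rings since $B$ is reduced, with $A$ subintegrally closed in $B$ by hypothesis --- and with the radical ideal $I_{0}$ of $M_{0}$. This yields that $\frac{A[M_{0}]}{I_{0}A[M_{0}]}$ is subintegrally closed in $\frac{B[M_{0}]}{I_{0}B[M_{0}]}$.

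To finish, observe that $f,a_{2},a_{3}\in B[M_{0}]$ by construction (indeed $a_{2},a_{3}\in A[M_{0}]$), so $f^{2}-a_{2},\,f^{3}-a_{3}\in B[M_{0}]\cap IB[M]=I_{0}B[M_{0}]$; hence the class $\bar{f}_{0}:=f+I_{0}B[M_{0}]$ in $\frac{B[M_{0}]}{I_{0}B[M_{0}]}$ satisfies $\bar{f}_{0}^{\,2},\bar{f}_{0}^{\,3}\in\frac{A[M_{0}]}{I_{0}A[M_{0}]}$. Subintegral closedness over $M_{0}$ then provides $a\in A[M_{0}]$ with $f-a\in I_{0}B[M_{0}]\subseteq IB[M]$, and since $a\in A[M_{0}]\subseteq A[M]$ this shows $f+IB[M]=a+IB[M]\in\frac{A[M]}{IA[M]}$, as required. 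I do not expect a genuine obstacle here: the one point needing care is the bookkeeping around $M_{0}$ --- that it is affine, that $I_{0}$ remains radical, and that the ideal contractions along $B[M_{0}]\subseteq B[M]$ behave as stated --- which is precisely what is needed to invoke Lemma \ref{Lm2} verbatim.
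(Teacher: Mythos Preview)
Your proof is correct. The paper's proof of this lemma is a single line: it simply records that the statement is the special case of Lemma~\ref{Lm2} obtained by setting $M=N$. Both approaches therefore rest on Lemma~\ref{Lm2}, but there is a genuine difference in completeness: Lemma~\ref{Lm2} as stated assumes the monoids are \emph{affine}, whereas Lemma~\ref{Lm3} carries no such hypothesis on $M$. Your argument addresses exactly this point by passing to a finitely generated (hence affine) submonoid $M_{0}$ containing the support of the relevant elements, verifying that $I_{0}=I\cap M_{0}$ is still radical and that the contractions $IB[M]\cap B[M_{0}]=I_{0}B[M_{0}]$ (and similarly for $A$) hold, and then invoking Lemma~\ref{Lm2} with $M=N=M_{0}$. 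The paper's one-line proof leaves this reduction implicit; elsewhere (e.g.\ in the proof of Theorem~\ref{MainTheorem-1}(b),(c)) the non-affine case is handled by a filtered-union argument, but that is not spelled out here. So your route is essentially the paper's, made rigorous for arbitrary $M$.
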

\begin{proof}
    This is a special case of the Lemma \ref{Lm2} by considering $M=N$.
$\hfill \gj$
\end{proof}

\begin{lemma}\label{LmUnits}
Let $A$ be a reduced ring and $M$ an affine positive monoid. Let $I$ be a radical ideal in $M$. Then $\U(A)=\U\left(\frac{A[M]}{IA[M]}\right)$.
\end{lemma}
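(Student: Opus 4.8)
The inclusion $A\hookrightarrow R:=A[M]/IA[M]$ is injective since $1\notin I$, so $\U(A)\subseteq \U(R)$ is automatic and the content of the lemma is the reverse inclusion $\U(R)\subseteq \U(A)$. The plan is to realize $R$ as a \emph{reduced} $\BZ_{\ge 0}$-graded ring whose degree-zero component is precisely $A$, and then to invoke the general principle that a reduced $\BZ_{\ge 0}$-graded ring has all of its units in degree zero.

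For the grading: because $M$ is affine and positive, it embeds in some $\BZ^d$ so that the cone $\BR_{\ge 0}M$ is pointed; a supporting-hyperplane argument then produces a monoid homomorphism $\deg\colon M\to\BZ_{\ge 0}$ with $\deg^{-1}(0)=\{1\}$. This makes $A[M]=\bigoplus_{n\ge 0}A[M]_n$ a $\BZ_{\ge 0}$-graded ring with $A[M]_0=A$, and since $IA[M]$ is generated by monomials it is a homogeneous ideal with $1\notin I$, so $R=A[M]/IA[M]$ inherits a $\BZ_{\ge 0}$-grading with $R_0=A$. For reducedness: since $I$ is radical, $I=\p_1\cap\cdots\cap\p_n$ for prime ideals $\p_i$ of $M$ by \cite[Proposition 2.36]{B-G}; comparing monomial supports gives $IA[M]=\bigcap_i\p_iA[M]$, hence an embedding $R\hookrightarrow\prod_{i=1}^n A[M]/\p_iA[M]\cong\prod_{i=1}^n A[M\setminus\p_i]$. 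Each $M\setminus\p_i$ is a cancellative torsion-free submonoid of $M$ and $A$ is reduced, so each factor $A[M\setminus\p_i]$ is reduced \cite{B-G}; therefore $R$ is reduced.

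It then remains to establish the graded principle: if $S=\bigoplus_{n\ge 0}S_n$ is a reduced $\BZ_{\ge 0}$-graded ring, then $\U(S)=\U(S_0)$. I would argue that every minimal prime $\mathfrak q$ of $S$ is homogeneous — the ideal generated by the homogeneous elements of $\mathfrak q$ is prime and contained in $\mathfrak q$, hence equals $\mathfrak q$ — so $S/\mathfrak q$ is a $\BZ_{\ge 0}$-graded domain; in a graded domain the top homogeneous component of a product $uv$ is the product of the top components of $u$ and of $v$, hence nonzero, so $uv=1$ forces $u,v\in (S/\mathfrak q)_0$. Consequently a unit of $S$ maps into degree zero modulo every minimal prime, i.e.\ all of its positive-degree homogeneous components lie in every minimal prime, hence vanish because $S$ is reduced. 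Applying this to $S=R$ yields $\U(R)\subseteq R_0=A$, which is what we want.

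The main obstacle is not any one step but keeping the two structural inputs honest: the existence of the positive $\BZ_{\ge 0}$-grading on an affine positive monoid (the pointed-cone/positive-functional fact) and the reducedness of $R$ (via the primary decomposition of $I$ together with the reducedness of monoid algebras of torsion-free monoids over reduced rings). These are precisely the places where the hypotheses ``$M$ affine and positive'' and ``$A$ reduced, $I$ radical'' are genuinely used, and both could alternatively be routed through standard results on monoid algebras.
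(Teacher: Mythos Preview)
Your argument is correct and takes a genuinely different route from the paper. The paper proceeds by induction on the number $n$ of prime components of $I=\p_1\cap\cdots\cap\p_n$: the base case $n=1$ identifies $A[M]/\p_1A[M]$ with $A[M\setminus\p_1]$ and invokes \cite[Proposition~4.20]{B-G} directly (this is essentially the same fact you use, that units of a positive monoid algebra over a reduced ring lie in the base), while the inductive step sets up the Milnor square associated to $J=\p_2\cap\cdots\cap\p_n$ and $\p_1$, writes down the two resulting unit-group exact sequences from Proposition~\ref{Bass-Sequence}, and runs the snake lemma together with a retraction argument to force $\theta_1\colon\U(A)\to\U(A[M]/IA[M])$ to be an isomorphism. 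Your approach instead handles all $n$ at once: you globalize the positivity of $M$ into a $\BZ_{\ge 0}$-grading on $R=A[M]/IA[M]$ with $R_0=A$, establish reducedness of $R$ via the embedding into $\prod_i A[M\setminus\p_i]$, and then appeal to the general principle that units in a reduced non-negatively graded ring are concentrated in degree zero. Your method is more elementary and self-contained for this lemma in isolation; the paper's inductive Milnor-square argument is heavier here but is consistent with the machinery used throughout Section~\ref{Sec_4}, where the same decomposition drives the proofs of Lemma~\ref{Lm2}, Theorem~\ref{Nil-Thm}, and Theorem~\ref{MainTheorem-1}.
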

\begin{proof}
Since $I$ is a radical ideal, by \cite[Proposition 2.36]{B-G}, we can assume that $I=\p_1\cap \ldots \cap \p_n$ where $\p_i$ are prime ideals in $M$. We prove the result by induction on $n$. If $n=1$, then $I=\p_1$. Then $M'=M \setminus \mathfrak{p_1}$, is a positive submonoid of $M$. Hence  by \cite[Proposition 4.20]{B-G}, we get $\U(A)=\U(A[M'])$. Note that $A[M']=\frac{A[M]}{IA[M]}$, which establishes the case  $n=1$. For $n\geq 2$, let $J=\p_2\cap \ldots \cap \p_n$. Then  the following Milnor squares

\begin{center}

\begin{tikzcd}
A \arrow[d, "\Id"] \arrow[r, "\Id"] & A \arrow[d, "\Id"] &  & {\frac{A[M]}{IA[M]}} \arrow[r, "p_1"] \arrow[d, "p_2"] & {\frac{A[M]}{JA[M]}} \arrow[d, "q_1"']                                            \\
A \arrow[r, "\Id"]                 & A                 &  & {\frac{A[M]}{\mathfrak{p_1}A[M]}} \arrow[r, "q_2"]     & {\frac{A[M]}{\mathfrak({p_1}A[M]+JA[M])}=\frac{A[M]}{(J\cup \mathfrak{p_1})A[M]}}
\end{tikzcd}
\end{center}

yield the following commutative diagram 
\begin{center}
\begin{tikzcd}
1 \arrow[r] & \U(A) \arrow[d, "\theta_1"] \arrow[r] & \U(A)\oplus \U(A) \arrow[d, "\theta_2 "] \arrow[r]                   & \U(A) \arrow[d, "\theta_3"] \arrow[r]        &     1                                           \\
1 \arrow[r] & {\U\left(\frac{A[M]}{IA[M]}\right)} \arrow[r]        & {\U\left(\frac{A[M]}{JA[M]}\right)\oplus \U\left(\frac{A[M]}{\mathfrak{p_1}A[M]}\right)} \arrow[r] & {\U\left(\frac{A[M]}{(\mathfrak{p_1}\cup J)A[M]}\right)},
\end{tikzcd}
\end{center}
where $\theta_1$, $\theta_2$ and $\theta_3$
are the group homomorphisms induced by the inclusions
$A\hookrightarrow \frac{A[M]}{IA[M]}$,
$A\hookrightarrow \frac{A[M]}{JA[M]}$,
$A\hookrightarrow \frac{A[M]}{\p_1A[M]}$ and
$A\hookrightarrow \frac{A[M]}{(J\cup \p_1)A[M]}$ respectively. Rows are exact by the Proposition \ref{Bass-Sequence}.

  Now by applying the snake lemma \cite[Lemma 1.3.2]{CW1994Hom}, we have an exact sequence
  $\ker(\theta_1)\rightarrow\ker(\theta_2)\rightarrow\ker(\theta_3)\rightarrow\coker(\theta_1)\rightarrow\coker(\theta_2)\rightarrow\coker(\theta_3)$. By induction $\theta_2$ is an isomorphism, so $\ker(\theta_2)=\coker(\theta_2)=0$. Also $\ker(\theta_1)\rightarrow \ker(\theta_2)$ is an inclusion because $\U(A)\rightarrow \U(A)\oplus \U(A)$ is an inclusion, so $\ker(\theta_1) =0$, i.e., $\theta_1$ is injective. Now we only need to prove $\coker(\theta_1)=0$ and which follows from $\ker(\theta_3) =0$ as $\coker(\theta_2)=0$.
  
  So it remains to prove $\theta_3$ is injective. For this, consider the following sequence of maps $A\xrightarrow{} \frac{A[M]}{(\mathfrak{p}_1\cup J)A[M]} \xrightarrow{} A$, where the first map is an inclusion and the second map is a projection, i.e., induced by the map $A[M]\rightarrow A$, $M\setminus\{1\}\xrightarrow{} 0$, such that composition is the identity map. Since $\U$ is a functor, we get that $\theta_3$ is injective. This completes the proof.
 $\hfill \gj$
\end{proof}

\vspace{.3cm}
  Our objective is to prove Theorem \ref{Nil-Thm}, which describes the nilradical $\Nil\left(\frac{A[M]}{IA[M]}\right)$ when $M$ is an affine monoid and  $I$ is a radical ideal in $M$. The proof proceeds by induction on the number of prime components of $I$, making essential use of Lemmas \ref{LmFibProdNill_1} and \ref{LmFibProdNill_2}.

  Let A be a commutative ring and let $I \subseteq J$ be ideals in A. Then the ring homomorphism $f: A / I \rightarrow A / J$ induces a homomorphism of $A$-modules $\left.f\right|_{\operatorname{Nil}(A / I)}: \operatorname{Nil}(A / I) \rightarrow \operatorname{Nil}(A / J)$. By abuse of notation we will write $\left.f\right|_{\mathrm{Nil}(A / I)}=f$.

\begin{lemma}\label{LmFibProdNill_1}
Let $A$ be a commutative ring and $M$ an affine monoid. Let $I=\p_1\cap \ldots \cap \p_n$ be a radical ideal in $M$ and  $J=\p_2\cap \ldots \cap \p_n$, where the $\mathfrak{p_i}$ are prime ideals in $M$. Then the following  Cartesian square
 \begin{equation} \label{FibDiagEqn}
\begin{tikzcd}
{\frac{A[M]}{IA[M]}} \arrow[r,"p_1"] \arrow[d,"p_2"]    & {\frac{A[M]}{JA[M]}} \arrow[d,"q_1"]                                      \\
{\frac{A[M]}{\mathfrak{p_1}A[M]}} \arrow[r,"q_2"] & {\frac{A[M]}{(J\cup \mathfrak{p_1})A[M]}} 
\end{tikzcd}
\end{equation}
induces the following Cartesian square

 \begin{equation}\label{FibDiagEqn*}
\begin{tikzcd}
{\Nil\left(\frac{A[M]}{IA[M]}\right)} \arrow[r,"p_1"] \arrow[d,"p_2"]    & {\Nil\left(\frac{A[M]}{JA[M]}\right)} \arrow[d,"q_1"]                                      \\
{\Nil\left(\frac{A[M]}{\mathfrak{p_1}A[M]}\right)} \arrow[r,"q_2"] & {\Nil\left(\frac{A[M]}{(J\cup \mathfrak{p_1})A[M]}\right)} 
\end{tikzcd}
\end{equation}
in the category of $A[M]$-modules.
\end{lemma}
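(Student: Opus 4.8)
The strategy is to verify the Cartesian-square property of \eqref{FibDiagEqn*} directly via the elementwise criterion of Lemma \ref{LmFibProdEqv}, using the already-known Cartesian square \eqref{FibDiagEqn} together with the observation that $\Nil$ behaves well with respect to the reductions involved. First I would note that all four maps in \eqref{FibDiagEqn} restrict to the nilradicals: if $I\subseteq J$ then the surjection $A[M]/IA[M]\twoheadrightarrow A[M]/JA[M]$ sends nilpotents to nilpotents, so $p_1,p_2,q_1,q_2$ do induce $A[M]$-module maps on the $\Nil$'s as indicated, and commutativity of \eqref{FibDiagEqn*} is inherited from that of \eqref{FibDiagEqn}. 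So the only real content is the universal (equivalently, elementwise pullback) property.

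Next, by Lemma \ref{LmFibProdEqv} applied to \eqref{FibDiagEqn} (valid since all four objects are rings), given $\bar x\in A[M]/JA[M]$ and $\bar y\in A[M]/\p_1 A[M]$ with $q_1(\bar x)=q_2(\bar y)$, there is a unique $\bar u\in A[M]/IA[M]$ with $p_1(\bar u)=\bar x$, $p_2(\bar u)=\bar y$. To prove \eqref{FibDiagEqn*} is Cartesian it suffices, again by Lemma \ref{LmFibProdEqv}, to show: if moreover $\bar x$ and $\bar y$ are nilpotent, then the patched element $\bar u$ is nilpotent. This is the crux of the argument. The key point is that $\IIm(i_I) $— i.e. the fiber product itself — sits inside $A[M]/IA[M]$, and that $I=\p_1\cap J$ gives $IA[M] = \p_1A[M]\cap JA[M]$ (this uses that $I,J,\p_1$ are radical/monoid ideals, so the corresponding ideals in $A[M]$ are spanned by monomials and intersections of such are computed monomial-wise; it is the same identity already exploited in Lemma \ref{Lm2}). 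Hence $\Nil(A[M]/IA[M])$, being the intersection of the nilradical with the pullback $A[M]/JA[M]\times_{A[M]/(J\cup\p_1)A[M]} A[M]/\p_1A[M]$, maps into $\Nil(A[M]/JA[M])\times \Nil(A[M]/\p_1A[M])$; what we must check is surjectivity onto the fiber product of the nilradicals.

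Concretely: pick a lift $u\in A[M]$ of $\bar u$. Since $\bar x=p_1(\bar u)$ is nilpotent, $u^{k}\in JA[M]$ for some $k$; since $\bar y=p_2(\bar u)$ is nilpotent, $u^{\ell}\in\p_1 A[M]$ for some $\ell$. Taking $m=\max(k,\ell)$ we get $u^{m}\in JA[M]\cap\p_1A[M] = IA[M]$, so $\bar u^{m}=0$ in $A[M]/IA[M]$, i.e. $\bar u\in\Nil(A[M]/IA[M])$. Then $\bar u$ is exactly the required unique element of $\Nil(A[M]/IA[M])$ pulling back to $(\bar x,\bar y)$ — uniqueness is inherited from uniqueness in \eqref{FibDiagEqn}. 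This establishes the elementwise pullback criterion, so \eqref{FibDiagEqn*} is Cartesian in $\mathbf{A[M]}$-$\mathbf{Mod}$; that the patching maps are $A[M]$-linear is immediate since they are restrictions of ring maps.

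The main obstacle — though it is mild — is justifying the ideal identity $IA[M]=\p_1A[M]\cap JA[M]$ inside the monoid algebra; everything else is a formal consequence of Lemma \ref{LmFibProdEqv} and the nilpotence bookkeeping above. One should be slightly careful that "$A[M]/\p A[M]$" for a monoid ideal is the free $A$-module on $M\setminus\p$, so intersections of these submodules are computed on the level of basis monomials, giving the identity cleanly; this is exactly the manipulation already made explicit in the proof of Lemma \ref{Lm2}, so I would simply cite that computation rather than repeat it.
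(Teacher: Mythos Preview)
Your proposal is correct and follows essentially the same route as the paper: verify the elementwise pullback criterion of Lemma~\ref{LmFibProdEqv}, and show that the unique patched element is nilpotent because its high power lies in $JA[M]\cap \mathfrak{p}_1A[M]=(J\cap\mathfrak{p}_1)A[M]=IA[M]$. The only cosmetic difference is that the paper reconstructs the pullback element $\bar F$ by hand (writing $f+u-u'=g+v'-v$ explicitly), whereas you invoke the Cartesian property of \eqref{FibDiagEqn} directly to get existence and uniqueness of $\bar u$ and then check nilpotence; your version is marginally cleaner but not materially different.
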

\begin{proof}
To prove the above square is a Cartesian square we will use Lemma \ref{LmFibProdEqv}. For any pair  $\bar{f}=f+JA[M]\in \Nil\left(\frac{A[M]}{JA[M]}\right)$ and $\bar{g}=g+\mathfrak{p_1}A[M]\in \Nil\left(\frac{A[M]}{\mathfrak{p_1}A[M]}\right)$ such that $q_1(\bar{f})=q_2(\bar{g})$,  we have $f+J A[M]+\mathfrak{p}_{1} A[M]=g+J A[M]+\mathfrak{p}_{1} A[M]$. Then there exists $u, u^{\prime} \in J A[M]$ and $v, v^{\prime} \in \mathfrak{p}_{1}A[M]$ such that $f+u+v=g+u^{\prime}+v^{\prime}$, which implies $f+u-u^{\prime}=g+v^{\prime}-v$. 
Now by taking $\bar{F} = f+u-u' + IA[M]$, we have $p_1(\bar{F}) = f+u-u'+JA[M] = f+JA[M] = \bar{f}$ and $p_2(\bar{F}) = f+u-u'+\mathfrak{p_1}A[M] = g+v'-v + \mathfrak{p_1}A[M] = g+\mathfrak{p_1}A[M]= \bar{g}$. Its remains to show that $\bar{F}\in  \Nil\left(\frac{A[M]}{IA[M]}\right)$. Since $\bar{f}\in \Nil\left(\frac{A[M]}{JA[M]}\right)$ and $\bar{g}\in \Nil\left(\frac{A[M]}{\mathfrak{p_1}A[M]}\right)$, there exists integers $k>0, k'>0$ such that $(f+u-u')^k\in JA[M]$ and $(g+v'-v)^{k'}\in \mathfrak{p_1}A[M]$. Then $(f+u-u')^{k+k'} = (g+v'-v)^{k+k'}\in (J\cap \mathfrak{p_1})A[M] = IA[M]$ and hence $\bar{F}=f+u-u'+IA[M]$ is a  nilpotent element. This $\bar{F}$ is unique because the diagram (\ref{FibDiagEqn}) of Lemma \ref{LmFibProdNill_1} is a Cartesian square. This completes the proof.
 $\hfill \gj$
\end{proof}
\vspace{.3cm}

    Let $A$ be a commutative ring and $M$  a  monoid. Observe that $\Nil(A)[M]$ is an abelian subgroup of $A[M]$. Now, let $f= \sum\limits_{ x\in M}f_{x}x\in A[M]$ and $g= \sum\limits_{ y\in M}g_{y}y\in \Nil(A)[M]$, where $f_{x}$ are in $A$ and $g_{y}$ are in $\Nil(A)$. Then $fg= \sum\limits_{x\in M,y\in M}f_{x}g_{y}xy\in \Nil(A)[M]$ as $f_xg_y$ are nilpotent. Therefore $\Nil(A)[M]$ is an $A[M]$-module.
   For any ideal $I$ in $M$, $I\Nil(A)[M]$ is a submodule of $\Nil(A)[M]$ and hence  $\frac{\Nil(A)[M]}{I\Nil(A)[M]}$ has an $A[M]$-module structure.
   

\begin{lemma}\label{LmFibProdNill_2}
Let $A$ be a commutative ring and $M$ an affine monoid. Let $I=\p_1\cap \ldots \cap \p_n$ be a radical ideal in $M$ and  $J=\p_2\cap \ldots \cap \p_n$, where the $\mathfrak{p_i}$ are prime ideals in $M$. 
Then the following commutative diagram 
\begin{equation} \label{Diag_LmFibProdNill_2}
\begin{tikzcd}
\frac{\Nil(A)[M]}{I\Nil(A)[M]} \arrow[r, "p'_1"] \arrow[d, "p'_2"]    & \frac{\Nil(A)[M]}{J\Nil(A)[M]} \arrow[d, "q'_1"]                                      \\
\frac{\Nil(A)[M]}{\p_1\Nil(A)[M]} \arrow[r, "q'_2"] & \frac{\Nil(A)[M]}{(J\cup \p_1)\Nil(A)[M]}
\end{tikzcd}
\end{equation}
is a Cartesian square in the category of $A[M]$-modules, where $p'_1$ is an $A[M]$-module homomorphism given by $p'_1(f+I\Nil(A)[M]) = f + J\Nil(A)[M]$ and is well-defined because $I\subset J$. Similarly one can define $A[M]$-module homomorphisms  $p'_2$, $q'_1$, $q'_2$.
\end{lemma}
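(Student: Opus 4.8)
The plan is to reduce the assertion to the $A$-module version of Example~\ref{LmFibProd_1}. The first step is to record a concrete description of the objects involved: $\Nil(A)[M]$ is the $A[M]$-submodule $\bigoplus_{x\in M}\Nil(A)\,x$ of $A[M]$, and for any ideal $K$ of $M$ the submodule $K\Nil(A)[M]$ equals $\bigoplus_{x\in K}\Nil(A)\,x$; indeed it is generated over $A[M]$ by the elements $x\cdot n$ with $x\in K$, $n\in\Nil(A)$, and $xy\in K$ for all $x\in K$, $y\in M$. In particular $\frac{\Nil(A)[M]}{K\Nil(A)[M]}\cong\bigoplus_{x\in M\setminus K}\Nil(A)\,x$ as $A[M]$-modules, and the maps $p'_1,p'_2,q'_1,q'_2$ of diagram~(\ref{Diag_LmFibProdNill_2}) are the evident projections.

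Next I would note the purely combinatorial identity $I=J\cap\p_1$, immediate from $I=\p_1\cap\dots\cap\p_n$ and $J=\p_2\cap\dots\cap\p_n$. Using the direct-sum description of the previous step, this upgrades to the two module identities inside $\Nil(A)[M]$
\[
J\Nil(A)[M]\cap\p_1\Nil(A)[M]=\bigoplus_{x\in J\cap\p_1}\Nil(A)\,x=I\Nil(A)[M],
\qquad
J\Nil(A)[M]+\p_1\Nil(A)[M]=(J\cup\p_1)\Nil(A)[M].
\]
With these identifications, the square~(\ref{Diag_LmFibProdNill_2}) is exactly the square furnished by the $A$-module version of Example~\ref{LmFibProd_1}, applied with $A$ there replaced by the commutative ring $A[M]$, the module replaced by $\Nil(A)[M]$, the ideal ``$I$'' replaced by $JA[M]$ and the ideal ``$J$'' replaced by $\p_1A[M]$: its top-left corner is $\Nil(A)[M]\big/\big(J\Nil(A)[M]\cap\p_1\Nil(A)[M]\big)=\frac{\Nil(A)[M]}{I\Nil(A)[M]}$ by the first displayed identity, its bottom-right corner is $\Nil(A)[M]\big/\big(J\Nil(A)[M]+\p_1\Nil(A)[M]\big)=\frac{\Nil(A)[M]}{(J\cup\p_1)\Nil(A)[M]}$ by the second, and the remaining two corners and all four maps coincide on the nose. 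Hence the square is Cartesian in the category of $A[M]$-modules.

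Alternatively, one could avoid Example~\ref{LmFibProd_1} altogether and run a direct element chase via Lemma~\ref{LmFibProdEqv}, exactly as in the proof of Lemma~\ref{LmFibProdNill_1} but with the nilpotency bookkeeping deleted (the elements of $\Nil(A)[M]$ play no special role here beyond lying in a common ambient module). Either way, the single point that is not completely formal — and where I expect the only friction to lie — is the identity $J\Nil(A)[M]\cap\p_1\Nil(A)[M]=(J\cap\p_1)\Nil(A)[M]$: intersection of two ``extended'' submodules agrees with the submodule extended from the intersection only because $\Nil(A)[M]$ is $M$-graded and each $K\Nil(A)[M]$ is precisely the sum of the homogeneous components indexed by $K$. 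Everything else is a matter of matching the four corners of~(\ref{Diag_LmFibProdNill_2}) with the four corners produced by Example~\ref{LmFibProd_1}.
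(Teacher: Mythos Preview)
Your proposal is correct. Your primary route---identifying the square~(\ref{Diag_LmFibProdNill_2}) with an instance of the module version of Example~\ref{LmFibProd_1} via the graded identities $J\Nil(A)[M]\cap\p_1\Nil(A)[M]=I\Nil(A)[M]$ and $J\Nil(A)[M]+\p_1\Nil(A)[M]=(J\cup\p_1)\Nil(A)[M]$---is different from what the paper does. The paper instead follows your ``alternative'': it runs the element chase of Lemma~\ref{LmFibProdEqv} directly, using only the sum identity to construct the lift $\bar F=f+u+I\Nil(A)[M]$, and then gets uniqueness not from the intersection identity but from the fact that~(\ref{Diag_LmFibProdNill_2}) embeds in the Cartesian square~(\ref{FibDiagEqn}) of Lemma~\ref{LmFibProdNill_1}. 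Your approach is more self-contained (it does not rely on Lemma~\ref{LmFibProdNill_1}) and arguably cleaner, at the cost of having to verify the intersection identity---which, as you correctly note, is where the $M$-grading does the real work. The paper's approach trades that verification for an appeal to the previously established lemma.
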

\begin{proof}
Let $\bar{f}\in \frac{\Nil(A)[M]}{J\Nil(A)[M]}$ and $\bar{g}\in \frac{\Nil(A)[M]}{p_1\Nil(A)[M]}$
  such that $q'_1(\bar{f})=q'_2(\bar{g})$. 
 Then $\bar{f}=f+J \Nil(A)[M]$ and $\bar{g}=g+$ $\mathfrak{p}_{1} \Nil(A)[M]$ for some $f, g \in \Nil(A)[M]$. Thus $f+\left(J \cup \mathfrak{p}_{1}\right) \Nil(A)[M]=$ $g+\left(J \cup \mathfrak{p}_{1}\right) \Nil(A)[M]$. Because $(J\cup\p_1)\Nil(A)[M] = J\Nil(A)[M]+\p_1\Nil(A)[M]$, there exists $u\in J\Nil(A)[M]$ and $v\in p_1\Nil(A)[M]$ such that $f+u=g+v$. Now by taking $\bar{F} = f+u + I\Nil(A)[M]$, we have $p'_1(\bar{F}) = f+u+J\Nil(A)[M] = f+J\Nil(A)[M] = \bar{f}$ and $p'_2(\bar{F}) = f+u+\p_1\Nil(A)[M] = g+v + \p_1\Nil(A)[M] = g+\p_1\Nil(A)[M]= \bar{g}$.
 This $\bar{F}$ is unique because the diagram $(\ref{Diag_LmFibProdNill_2})$ is a sub-diagram of the Cartesian square $(\ref{FibDiagEqn})$ of Lemma \ref{LmFibProdNill_1}. Hence by  Lemma \ref{LmFibProdEqv}, the above square is a Cartesian square in the category of $A[M]$-modules. 
$\hfill \gj$
\end{proof}

 \begin{theorem}\label{Nil-Thm}
 Let $A$ be a commutative ring and $M$ an affine monoid. Let $I$ be a radical ideal in $M$. Then $\Nil\left(\frac{A[M]}{IA[M]}\right) \cong \frac{\Nil(A)[M]}{I\Nil(A)[M]}$ as $A[M]$-modules.
 \end{theorem}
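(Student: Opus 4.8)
The plan is to induct on the number $n$ of prime components of the radical ideal $I$. By \cite[Proposition 2.36]{B-G} (available because $M$ is affine) I may write $I=\mathfrak{p}_1\cap\cdots\cap\mathfrak{p}_n$ with the $\mathfrak{p}_i$ prime ideals of $M$; the degenerate cases $I=\emptyset$, where $IA[M]=0$, and $I=M$ are trivial. For an arbitrary ideal $K$ of $M$, the composite $\Nil(A)[M]\hookrightarrow A[M]\twoheadrightarrow A[M]/KA[M]$ has image inside $\Nil\!\left(A[M]/KA[M]\right)$ and annihilates $K\Nil(A)[M]$, so it induces a natural $A[M]$-linear map
\[
\Phi_K\colon \frac{\Nil(A)[M]}{K\Nil(A)[M]} \longrightarrow \Nil\!\left(\frac{A[M]}{KA[M]}\right),
\]
and these maps are compatible with the homomorphisms induced by inclusions $K\subseteq K'$. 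The assertion of the theorem is that $\Phi_I$ is an isomorphism. I would begin by observing that every $\Phi_K$ is injective: comparing coefficients in the $A$-basis $M$ gives $K\Nil(A)[M]=KA[M]\cap\Nil(A)[M]$, so it remains only to prove surjectivity of $\Phi_I$.

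For the base case $n=1$ the ideal $I=\mathfrak{p}_1$ is prime, so $M':=M\setminus\mathfrak{p}_1$ is a submonoid of $M$ and $A[M]/\mathfrak{p}_1A[M]\cong A[M']$ as rings, the isomorphism sending the $A$-basis $M'$ onto itself. Since $M'\subseteq\gp(M)\cong\mathbb{Z}^d$, the group $\gp(M')$ is torsion-free and hence admits a compatible total order, so the classical leading-coefficient argument (the coefficient of the largest element in the support of a nilpotent is nilpotent; then induct on the size of the support) yields $\Nil(A[M'])=\Nil(A)[M']$. Transporting this along the above identifications shows that $\Phi_{\mathfrak{p}_1}$ is an isomorphism. (The case $n=0$, i.e. $IA[M]=0$, is this same identity $\Nil(A[M])=\Nil(A)[M]$.)

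For the inductive step, let $n\ge 2$ and set $J:=\mathfrak{p}_2\cap\cdots\cap\mathfrak{p}_n$. What makes the induction close is the monoid fact that the union of two prime ideals of a monoid is again prime: if $xy\in\mathfrak{p}\cup\mathfrak{q}$ then $xy\in\mathfrak{p}$ or $xy\in\mathfrak{q}$, and in either case $x$ or $y$ lies in $\mathfrak{p}\cup\mathfrak{q}$; and $\mathfrak{p}\cup\mathfrak{q}$ is proper since $1\notin\mathfrak{p},\mathfrak{q}$. Hence $J\cup\mathfrak{p}_1=\bigcap_{i=2}^{n}(\mathfrak{p}_i\cup\mathfrak{p}_1)$ is a radical ideal with at most $n-1$ prime components, $J$ has $n-1$, and $\mathfrak{p}_1$ has one, so by the induction hypothesis $\Phi_J$, $\Phi_{\mathfrak{p}_1}$ and $\Phi_{J\cup\mathfrak{p}_1}$ are isomorphisms. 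Now Lemma \ref{LmFibProdNill_1} presents $\Nil(A[M]/IA[M])$ as the fiber product, in $A[M]$-modules, of $\Nil(A[M]/JA[M])$ and $\Nil(A[M]/\mathfrak{p}_1A[M])$ over $\Nil(A[M]/(J\cup\mathfrak{p}_1)A[M])$, while Lemma \ref{LmFibProdNill_2} presents $\Nil(A)[M]/I\Nil(A)[M]$ as the fiber product of the corresponding three modules $\Nil(A)[M]/K\Nil(A)[M]$, $K\in\{J,\mathfrak{p}_1,J\cup\mathfrak{p}_1\}$. The triple $\Phi_J,\Phi_{\mathfrak{p}_1},\Phi_{J\cup\mathfrak{p}_1}$ is a morphism between these two cospans that is an isomorphism at each vertex, so the unique-patching characterization of Cartesian squares in Lemma \ref{LmFibProdEqv} shows that the induced map on fiber products — which is $\Phi_I$ — is an isomorphism. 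This finishes the induction.

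I expect the real content to sit in two spots. First, the base-case identity $\Nil(A[M])=\Nil(A)[M]$: this is classical but deserves a clean statement, since it is what ultimately supplies surjectivity. Second, and this is the crux, the observation that $J\cup\mathfrak{p}_1$ is still a radical ideal with strictly fewer prime components — this uses the monoid hypothesis in an essential way, for the sum of two prime ideals of a ring is in general far from prime and its radical may acquire many new components, so the corresponding induction would not terminate for arbitrary rings. Everything else is bookkeeping with the Cartesian squares already produced by Lemmas \ref{LmFibProdNill_1} and \ref{LmFibProdNill_2}.
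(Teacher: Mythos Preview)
Your proof is correct and follows the same inductive architecture as the paper: induction on the number of prime components of $I$, the base case via $A[M]/\mathfrak{p}_1A[M]\cong A[M']$ together with $\Nil(A[M'])=\Nil(A)[M']$, and the inductive step via the two Cartesian squares of Lemmas~\ref{LmFibProdNill_1} and~\ref{LmFibProdNill_2}.

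The one noteworthy difference is in how the inductive step is closed. You invoke the monoid-specific fact that a union of prime ideals is prime, so that $J\cup\mathfrak{p}_1$ is radical with at most $n-1$ components and hence $\Phi_{J\cup\mathfrak{p}_1}$ is already an isomorphism by induction; then an isomorphism of cospans gives an isomorphism of pullbacks. The paper does not use this observation at all: it converts the two Cartesian squares into left-exact rows, notes that all three vertical maps $\theta_i$ are injective and that $\theta_2=\Phi_J\oplus\Phi_{\mathfrak{p}_1}$ is an isomorphism by induction, and then the snake lemma forces $\coker(\theta_1)=0$. So the paper needs only injectivity of $\Phi_{J\cup\mathfrak{p}_1}$, whereas your route needs its bijectivity---which you buy with the extra (correct and genuinely monoid-flavored) observation about unions of primes. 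Both arguments are clean; the paper's is slightly more economical in its hypotheses, while yours avoids the snake lemma and makes the monoid structure do visible work.
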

 \begin{proof}
 Since $I$ is a radical ideal, by \cite[Proposition 2.36]{B-G}, we can assume that $I=\p_1\cap \ldots \cap \p_n$,  where $\mathfrak{p_i}$ are prime ideals in $M$. We prove the result by induction on $n$. If $n=1$ then $I=\mathfrak{p}_{1}$. Since $I$ is a prime ideal, $M^{\prime}=M \backslash \p_1$ is a submonoid of $M$ and the $A[M] \rightarrow A[M']$ that sends $\sum\limits_{m_i\in M} a_{i} m_{i}$ to $\sum\limits_{m_i\notin \p_1} a_{i} m_{i}$ with $a_{i}\in A$ is an $A[M]$-algebra homomorphism with kernel $\mathfrak{p}_{1} A[M]$. Therefore $A[M'] \cong \frac{A[M]}{\mathfrak{p}_{1} A[M]}$. Similarly $\Nil(A)[M'] \cong \frac{\Nil(A)[M]}{\mathfrak{p}_{1} \Nil(A)[M]}$. Thus it is enough to show that  $\Nil(A[M'])=\Nil(A)[M']$. Elements of $\Nil(A)\left[M^{\prime}\right]$ are finite sums $\Sigma a_{i} m_{i}$ with $a_{i}$ a nilpotent element of $A$ and $m_{i} \notin \mathfrak{p}_{1}$. Such elements are obviously nilpotent so we have an inclusion of $A$-modules $\Nil(A)\left[M^{\prime}\right] \subset$ $\Nil\left(A\left[M^{\prime}\right]\right)$, which we follow by $\Nil\left(A\left[M^{\prime}\right]\right) \subset A\left[M^{\prime}\right]$.
The surjection $A\left[M^{\prime}\right] \twoheadrightarrow (A / \Nil(A))\left[M^{\prime}\right]$ with kernel $\Nil(A)[M']$ induces the isomorphism
$A\left[M^{\prime}\right] / \Nil(A)\left[M^{\prime}\right] \cong
(A / \Nil(A))\left[M^{\prime}\right]$
. The last ring is reduced by \cite[Theorem 4.19]{B-G}, from which it follows that $\Nil\left(A\left[M^{\prime}\right]\right) \subset$ $\Nil(A)\left[M^{\prime}\right]$. Hence we get $\Nil(A[M'])=\Nil(A)[M']$ which establishes the case $n=1$.

   For $n\geq 2$, let $J=\p_2\cap \ldots \cap \p_n$. Consider  the following Cartesian square 
\begin{center}
\begin{tikzcd}
{\frac{A[M]}{IA[M]}} \arrow[r,"p_1"] \arrow[d,"p_2"]    & {\frac{A[M]}{JA[M]}} \arrow[d,"q_1"]                                      \\
{\frac{A[M]}{\mathfrak{p_1}A[M]}} \arrow[r,"q_2"] & {\frac{A[M]}{(J\cup \mathfrak{p_1})A[M]}} 
\end{tikzcd}
\end{center}
in the category of rings, where all  $p_1, p_2, q_1, q_2$ are natural quotient maps.
 $\Nil : \textbf{Rings}\longrightarrow \textbf{AbGr}$ is a functor.
  After applying $\Nil$ and  by Lemma \ref{LmFibProdNill_1} the following diagram is a Cartesian square
 \begin{center}
\begin{tikzcd}
{\Nil\left(\frac{A[M]}{IA[M]}\right)} \arrow[r,"p_1"] \arrow[d,"p_2"]    & {\Nil\left(\frac{A[M]}{JA[M]}\right)} \arrow[d,"q_1"]                                      \\
{\Nil\left(\frac{A[M]}{\mathfrak{p_1}A[M]}\right)} \arrow[r,"q_2"] & {\Nil\left(\frac{A[M]}{(J\cup \mathfrak{p_1})A[M]}\right)} 
\end{tikzcd}
\end{center}
 in the category of  $A[M]$-modules, where all maps are restricted to the corresponding \Nil ideals of natural quotient maps  $p_1, p_2, q_1,q_2$ and then again we use the same notations. Then we have the following exact sequence
 \[
       \xymatrix{
        0\ar@{->}[rr]&& \Nil\left(\frac{A[M]}{IA[M]}\right)\ar@{->}[rr]^(.35){(p_1,p_2)} & & \Nil\left(\frac{A[M]}{JA[M]}\right)\op \Nil\left(\frac{A[M]}{\p_1 A[M]}\right)\ar@{->}[rr]^(.6){q_1-q_2} & & \Nil\left(\frac{A[M]}{(J\cup\p_1)A[M]}\right)
       }.
      \] 
Again by Lemma \ref{LmFibProdNill_2}, we have the following Cartesian square
\begin{center}
\begin{tikzcd}
\frac{\Nil(A)[M]}{I\Nil(A)[M]} \arrow[r, "p'_1"] \arrow[d, "p'_2"]    & \frac{\Nil(A)[M]}{J\Nil(A)[M]} \arrow[d, "q'_1"]                                      \\
\frac{\Nil(A)[M]}{\p_1\Nil(A)[M]} \arrow[r, "q'_2"] & \frac{\Nil(A)[M]}{(J\cup \p_1)\Nil(A)[M]}
\end{tikzcd}
\end{center}
in the category of $A[M]$-modules. In this diagram $p_{1}'$ is defined by $p_{1}'(f+I \Nil(A)[M])=f+J \Nil(A)[M]$ (which is well-defined because $I \subset J)$. Similarly one defines the other maps $p_{2}', q_{1}', q_{2}'$. Note that the maps $q_{1}', q_{2}'$ are onto, and hence $q_{1}'-q_{2}'$ is onto.

Then we have the following exact sequence
\[
       \xymatrix{
        0\ar@{->}[rr]&&\frac{\Nil(A)[M]}{I\Nil(A)[M]}\ar@{->}[rr]^(.35){(p'_1,p'_2)} & & \frac{\Nil(A)[M]}{J\Nil(A)[M]}\op \frac{\Nil(A)[M]}{\p_1\Nil(A)[M]}\ar@{->}[rr]^(.6){q'_1-q'_2} & & \frac{\Nil(A)[M]}{(J\cup \p_1)\Nil(A)[M]}\ar@{->}[rr]&& 0
       }.
      \] 
Combining the above two exact sequences, we get the following 
\begin{equation}\label{EqNilDiagSnake1}
\begin{tikzcd}
0 \arrow[r] & \frac{\Nil(A)[M]}{I\Nil(A)[M]} \arrow[d, "\theta_1"] \arrow[r, "{(p'_1, \ p'_2)}"] & \frac{\Nil(A)[M]}{J\Nil(A)[M]}\oplus \frac{\Nil(A)[M]}{\p_1\Nil(A)[M]} \arrow[d, "\theta_2"] \arrow[r, "q'_1-q'_2"] & \frac{\Nil(A)[M]}{(J\cup \p_1)\Nil(A)[M]}\arrow[d, "\theta_3"] \arrow[r, " "]& 0\\
0 \arrow[r] & \Nil\left(\frac{A[M]}{IA[M]}\right) \arrow[r, "{(p_1, \ p_2)}"]                       & \Nil\left(\frac{A[M]}{JA[M]}\right)\oplus \Nil\left(\frac{A[M]}{\p_1A[M]}\right) \arrow[r, "q_1-q_2"]                              & \Nil\left(\frac{A[M]}{(J\cup \p_1)A[M]}\right)                             
\end{tikzcd}
\end{equation}
commutative diagram with exact rows. The maps $\theta_{i}$ are defined as follows. Every element of $\frac{\Nil(A)[M]}{I\Nil(A)[M]}$ is of the form $F= \sum\limits_{ x\in M}f_{x}x +I\Nil(A)[M]$ with finitely many $f_x\in \Nil(A)$ non-zero. Since $I\Nil(A)[M]\subset IA[M]$, $F$ also represents a nilpotent element of $\frac{A[M]}{IA[M]}$. Thus we have the inclusion  $\theta_1: \frac{\Nil(A)[M]}{I\Nil(A)[M]} \rightarrow \Nil\left(\frac{A[M]}{IA[M]}\right)$. Replacing $I$ by $J, \mathfrak{p}_{1}$ and $J \cup \mathfrak{p}_{1}$ we also obtain inclusions $\theta_{2}$ and $\theta_{3}$.

Now by applying the snake lemma \cite[Lemma 1.3.2]{CW1994Hom} to the diagram $(\ref{EqNilDiagSnake1})$, we get the exact sequence $\ker(\theta_1)\rightarrow\ker(\theta_2)\rightarrow\ker(\theta_3)\rightarrow\coker(\theta_1)\rightarrow\coker(\theta_2)\rightarrow\coker(\theta_3)$. Since $\theta_{1}, \theta_{2}, \theta_{3}$ are inclusions we have $\ker(\theta_{1})=\ker(\theta_{2})=\ker(\theta_{3})=0$. By induction $\theta_{2}$ is an isomorphism so we also have $\coker(\theta_{2})=0$. Thus the snake lemma simplifies to $0 \rightarrow \coker(\theta_{1}) \rightarrow 0$ making $\theta_{1}$ an isomorphism, which completes the proof.
 $\hfill \gj$
 \end{proof}

\begin{corollary}
    Let $A$ be a commutative ring and $M$ an affine monoid. Let $I$ be a radical ideal in $M$. If $\Nil(A) = 0$, then $\Nil\left(\frac{A[M]}{IA[M]}\right)= 0$.
\end{corollary}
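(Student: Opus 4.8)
The plan is to deduce this directly from Theorem~\ref{Nil-Thm}. That theorem already supplies an isomorphism of $A[M]$-modules
\[
\Nil\left(\frac{A[M]}{IA[M]}\right) \;\cong\; \frac{\Nil(A)[M]}{I\,\Nil(A)[M]},
\]
valid for any commutative ring $A$, any affine monoid $M$, and any radical ideal $I$ of $M$. So the only work left is to compute the right-hand side under the extra hypothesis $\Nil(A)=0$.

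First I would observe that $\Nil(A)=0$ forces $\Nil(A)[M]=0$: the monoid algebra $\Nil(A)[M]$ is, as an $A$-module, free on the basis $M$ with coefficients drawn from $\Nil(A)$, so if $\Nil(A)$ is the zero ideal then every element of $\Nil(A)[M]$ is the zero element. Consequently the submodule $I\,\Nil(A)[M]$ is also zero, and the quotient $\frac{\Nil(A)[M]}{I\,\Nil(A)[M]}$ is the zero module.

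Combining these two facts, the isomorphism of Theorem~\ref{Nil-Thm} gives $\Nil\left(\frac{A[M]}{IA[M]}\right)\cong 0$, i.e.\ $\frac{A[M]}{IA[M]}$ is reduced, which is the assertion. (Equivalently, one could phrase this as: $\Nil(A)=0$ means $A$ is reduced, and then $\frac{A[M]}{IA[M]}$ has trivial nilradical by the displayed isomorphism.) There is no real obstacle here; the corollary is a one-line consequence of the theorem, and the slightly delicate point — that the nilradical of the generalized discrete Hodge algebra $\frac{A[M]}{IA[M]}$ is concentrated in the coefficients — has already been absorbed into the proof of Theorem~\ref{Nil-Thm} via the induction on the number of prime components of $I$ and Lemmas~\ref{LmFibProdNill_1} and \ref{LmFibProdNill_2}.
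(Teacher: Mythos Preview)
Your proof is correct and takes essentially the same approach as the paper's own proof, which simply reads ``Follows from Theorem~\ref{Nil-Thm}.'' You have merely spelled out the trivial observation that $\Nil(A)=0$ forces the right-hand side of the isomorphism in Theorem~\ref{Nil-Thm} to vanish.
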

\begin{proof}
Follows form Theorem \ref{Nil-Thm}.
 $\hfill \gj$
\end{proof}

 \begin{corollary}\label{Coro-Nil}
Let $A \subset B$ be an extension of commutative rings with $\Nil(A)=\Nil(B)$. Let $M\subset N$ be an extension of affine monoids. Furthermore, assume that $I$ is a radical ideal in $N$. Then $\Nil\left(\frac{A[M]}{(I \cap M) A[M]}\right) \cong \Nil\left(\frac{B[N]}{I B[N]}\right)$ if and only if $B$ is reduced or $M=N$.
 
 \end{corollary}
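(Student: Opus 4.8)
The plan is to reduce both nilradicals to the monoid-graded description furnished by Theorem~\ref{Nil-Thm}, and then to read the two implications off an explicit monoid basis. Throughout I interpret the relation $\cong$ in the statement as the assertion that the natural map of nilradicals induced by the ring homomorphism $\frac{A[M]}{(I\cap M)A[M]}\to\frac{B[N]}{IB[N]}$ (well defined since $(I\cap M)A[M]$ maps into $IB[N]$) is an isomorphism.

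First I would note that $I\cap M$ is a radical ideal of $M$: if $x\in M$ and $x^{n}\in I\cap M$, then $x^{n}\in\Rad(I)=I$, so $x\in I$ and hence $x\in I\cap M$. Since $M$ and $N$ are affine, Theorem~\ref{Nil-Thm} applies to $\frac{A[M]}{(I\cap M)A[M]}$ and to $\frac{B[N]}{IB[N]}$; writing $\mn:=\Nil(A)=\Nil(B)$, it yields module isomorphisms
\[
\Nil\left(\frac{A[M]}{(I\cap M)A[M]}\right)\cong\frac{\mn[M]}{(I\cap M)\mn[M]}
\qquad\text{and}\qquad
\Nil\left(\frac{B[N]}{IB[N]}\right)\cong\frac{\mn[N]}{I\mn[N]}.
\]
Since the isomorphism in Theorem~\ref{Nil-Thm} is assembled functorially out of natural quotient maps and $\mn\subseteq A\subseteq B$, these identifications are compatible with the inclusion $A[M]\hookrightarrow B[N]$, so the comparison map above corresponds to the obvious map $\frac{\mn[M]}{(I\cap M)\mn[M]}\to\frac{\mn[N]}{I\mn[N]}$.

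Next I would make both modules explicit. As $A[M]$ is $A$-free on $M$, one has $\mn[M]=\bigoplus_{m\in M}\mn\,m$ and $(I\cap M)\mn[M]=\bigoplus_{m\in I\cap M}\mn\,m$, so $\frac{\mn[M]}{(I\cap M)\mn[M]}=\bigoplus_{m\in M\setminus I}\mn\,m$; similarly $\frac{\mn[N]}{I\mn[N]}=\bigoplus_{x\in N\setminus I}\mn\,x$, and under these identifications the comparison map is the inclusion of the direct summands indexed by $M\setminus I\subseteq N\setminus I$. The ``if'' direction is now immediate: if $B$ is reduced then $\mn=0$ and both modules vanish; if $M=N$ then $I\cap M=I$ and the comparison map is the identity. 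In either case the two nilradicals are isomorphic.

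For the ``only if'' direction I would argue by contraposition. Assume $B$ is not reduced, so $\mn\neq 0$, and assume $M\neq N$; I must show the comparison map is not surjective. As it equals the inclusion $\bigoplus_{m\in M\setminus I}\mn\hookrightarrow\bigoplus_{x\in N\setminus I}\mn$ with $\mn\neq 0$, it suffices to exhibit an element of $N$ lying in neither $M$ nor $I$, that is, to prove $N\setminus M\not\subseteq I$. This monoid-theoretic statement is the step I expect to be the main obstacle: one must rule out the possibility that the radical ideal $I$ absorbs every element of $N$ outside $M$. I would approach it by writing $I=\p_1\cap\cdots\cap\p_n$ via \cite[Proposition~2.36]{B-G}, choosing $x\in N\setminus M$, and tracking, for each $\p_i$, which of the powers $x^{j}$ and products $x^{j}m$ with $m\in M$ lie in $\p_i$, using that $M\subseteq N$ are affine (positive) and the induced inclusion $\gp(M)\subseteq\gp(N)$ to force some such element out of $I$. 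Verifying that $N\setminus M\subseteq I$, under these structural hypotheses, already forces $M=N$ is where the real content of the corollary lies, and is the part requiring genuine care.
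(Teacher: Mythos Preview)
Your approach is essentially identical to the paper's: both invoke Theorem~\ref{Nil-Thm} to identify the two nilradicals with $\frac{\mn[M]}{(I\cap M)\mn[M]}$ and $\frac{\mn[N]}{I\mn[N]}$ (where $\mn=\Nil(A)=\Nil(B)$), describe these as free $\mn$-modules on $M\setminus I$ and $N\setminus I$, and compare. The ``if'' direction is handled the same way in both.

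You have, however, put your finger on a genuine gap that the paper's proof simply glosses over. The paper writes that ``if $M\subsetneq N$ there are more summands'' in the target, i.e.\ it tacitly asserts $(N\setminus I)\supsetneq(M\setminus I)$ whenever $M\neq N$. You correctly isolate this as the claim $N\setminus M\not\subseteq I$ and flag it as the step ``requiring genuine care''. Unfortunately, the step cannot be completed under the stated hypotheses: take $N=\mathbb{Z}_{+}^{2}$ (written multiplicatively in $x,y$), $M=\langle x\rangle$, and $I=\{x^{a}y^{b}:b\geq 1\}$, the prime (hence radical) ideal of $N$ consisting of all monomials with positive $y$-degree. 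Then $I\cap M=\emptyset$, $N\setminus I=M$, and the comparison map $\frac{\mn[M]}{(I\cap M)\mn[M]}\to\frac{\mn[N]}{I\mn[N]}$ is the identity on $\mn[M]$, hence an isomorphism, even though $M\neq N$ and $\mn$ may be nonzero. So the ``only if'' direction fails as stated, and your proposed attack via the prime decomposition of $I$ cannot succeed without an additional hypothesis (for instance, that $N\setminus I$ generates $N$, or equivalently that no $\p_i$ contains a face cutting $N$ down to $M$). The paper's argument shares the same defect; you were right to be suspicious.
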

 \begin{proof}
$(:\Rightarrow)$ Let $\Nil\left(\frac{A[M]}{(I \cap M) A[M]}\right) \cong \Nil\left(\frac{B[N]}{I B[N]}\right)$ holds true for a radical ideal $I$ in $N$. Suppose that $\Nil(B)\neq 0$, by Theorem \ref{Nil-Thm} we have $\Nil\left(\frac{A[M]}{(I \cap M) A[M]}\right) \cong \frac{\Nil(A)[M]}{(I \cap M) \Nil(A)[M]}$ and  $\Nil\left(\frac{B[N]}{I B[N]}\right)\cong\frac{\Nil(B)[N]}{I\Nil(B)[N]}$. Since $\Nil(A)=\Nil(B)$ we have $\frac{\Nil(A)[M]}{(I \cap M) \Nil(A)[M]} \cong \frac{\Nil(B)[M]}{(I \cap M) \Nil(B)[M]}$. Thus by assumption  $\frac{\Nil(B)[M]}{(I\cap M)\Nil(B)[M]} \cong \frac{\Nil(B)[N]}{I \Nil(B)[N]}$, but this is not true unless $M=N$. Indeed, if $M \subsetneq N$. We have the natural map $f : \frac{\Nil(B)[M]}{(I \cap M) \Nil(B)[M]} \rightarrow \frac{\Nil(B)[N]}{I \Nil(B)[N]}$ induced by $M \subset N$. Elements of the image of $f$ are uniquely sums $\sum\limits_{n_{i}\in N\backslash I}b_{i}n_{i}$ with $b_{i}\in \Nil(B)$ and those in the domain of $f$ are uniquely sums $\sum\limits_{n_{i}\in M\backslash I}b_{i}n_{i}= \sum\limits_{n_{i}\in M\backslash (I\cap M)}b_{i}n_{i}$ with $b_{i}\in \Nil(B)$ and if $M \subsetneq N$ there are more summands in the image so $f$ is not onto and thus not isomorphism. Hence we must have $M=N$.

$(\Leftarrow:)$ Suppose either  $\Nil(B)=0$ or $M=N$. After using Theorem \ref{Nil-Thm}  we have  $\Nil\left(\frac{A[M]}{(I \cap M) A[M]}\right) \cong \frac{\Nil(A)[M]}{(I \cap M) \Nil(A)[M]}$ and $\Nil\left(\frac{B[N]}{I B[N]}\right)\cong\frac{\Nil(B)[N]}{I \Nil(B)[N]}$. Since $\Nil(A)=\Nil(B)$ we have $\Nil\left(\frac{A[M]}{(I \cap M) A[M]}\right) \cong \frac{\Nil(B)[M]}{(I \cap M) \Nil(B)[M]}$. Thus to show $\Nil\left(\frac{A[M]}{(I \cap M) A[M]}\right) \cong \Nil\left(\frac{B[N]}{I B[N]}\right)$ it is equivalent to show that $\frac{\Nil(B)[M]}{(I\cap M)\Nil(B)[M]} \cong \frac{\Nil(B)[N]}{I \Nil(B)[N]}$, but the last isomorphism holds true if either $\Nil(B)=0$ or $M=N$.
 $\hfill \gj$
 \end{proof}

 \begin{lemma}\label{Ker-Main Theorem}
     Let $B$ be a commutative ring and $M\subset N$ an extension of affine monoids. Let $I$ be an ideal in $N$. Then the group $\frac{1+\frac{\Nil(B)[N]}{I\Nil(B)[N]}}{1+\frac{\Nil(B)[M]}{(I\cap M)\Nil(B)[M]}}$ is a trivial group if and only if $B$ is reduced or $M=N$.
     
 \end{lemma}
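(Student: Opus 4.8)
The plan is to convert the multiplicative quotient into an additive comparison of free $\Nil(B)$-modules. Since $\Nil(B)$ is a nil ideal of $B$, every element of $\Nil(B)[N]$ is nilpotent, so the map $\Nil(B)[N]\to\frac{B[N]}{IB[N]}$ induced by $\Nil(B)[N]\hookrightarrow B[N]$ has nil image, and it is injective because $\Nil(B)[N]\cap IB[N]=I\Nil(B)[N]$. Hence $G:=1+\frac{\Nil(B)[N]}{I\Nil(B)[N]}$ is an abelian subgroup of the unit group of $\frac{B[N]}{IB[N]}$, with inverses given by the finite geometric series $(1+f)^{-1}=1-f+f^{2}-\cdots$, and likewise $H:=1+\frac{\Nil(B)[M]}{(I\cap M)\Nil(B)[M]}$; the injective ring homomorphism $\frac{B[M]}{(I\cap M)B[M]}\to\frac{B[N]}{IB[N]}$ induced by $M\subset N$ restricts to an embedding $H\hookrightarrow G$. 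The first step is to note that $v\mapsto 1+v$ is a bijection of sets carrying $\frac{\Nil(B)[M]}{(I\cap M)\Nil(B)[M]}$ onto $H$ and $\frac{\Nil(B)[N]}{I\Nil(B)[N]}$ onto $G$; therefore $G/H$ is trivial if and only if the (automatically injective) natural inclusion
\[
\iota:\quad \frac{\Nil(B)[M]}{(I\cap M)\Nil(B)[M]}\ \hookrightarrow\ \frac{\Nil(B)[N]}{I\Nil(B)[N]}
\]
is surjective.

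Next I would make both ends of $\iota$ explicit. Because $B[N]$ is $B$-free on $N$ and $IB[N]$ is the $B$-span of $I$, the target of $\iota$ is the free $\Nil(B)$-module on $N\setminus I$; similarly the source is the free $\Nil(B)$-module on $M\setminus I=M\setminus(I\cap M)$, and $\iota$ is induced by the inclusion $M\setminus I\subseteq N\setminus I$. Hence $\iota$ is surjective exactly when $\Nil(B)=0$ or $M\setminus I=N\setminus I$. This settles the ``if'' direction at once: if $B$ is reduced then $\Nil(B)=0$ and $G=H=\{1\}$; if $M=N$ then $I\cap M=I$ and $\iota$ is the identity, so again $G=H$. (For radical $I$ one could instead identify the two modules with $\Nil\!\left(\frac{B[M]}{(I\cap M)B[M]}\right)$ and $\Nil\!\left(\frac{B[N]}{IB[N]}\right)$ via Theorem~\ref{Nil-Thm}, check that under these identifications $\iota$ becomes the canonical comparison map, and quote Corollary~\ref{Coro-Nil} with $A=B$; the module-theoretic argument above is cleaner and covers arbitrary ideals $I$.)

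For the ``only if'' direction I would argue contrapositively: assuming $\Nil(B)\neq 0$ and $M\subsetneq N$, I would produce a nontrivial class in $G/H$. Choose $0\neq b\in\Nil(B)$ and an element $n_{0}\in N\setminus M$, and consider $1+bn_{0}\in G$. Comparing unique $B$-linear expansions over $N\setminus I$, the coefficient of $1+bn_{0}$ at $n_{0}$ is $b\neq 0$ (note $n_{0}\neq 1$ since $1\in M$), whereas every element of $H$ has coefficient $0$ at each $n\in N\setminus M$. Thus $1+bn_{0}\notin H$, so $G/H\neq\{1\}$.

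The crux — the step I would isolate as a short preliminary claim — is the choice of the witness $n_{0}$: the argument needs $n_{0}\in N\setminus M$ that also avoids $I$, i.e.\ $N\setminus(M\cup I)\neq\varnothing$, equivalently $M\cup I\subsetneq N$. This is exactly the condition (together with $\Nil(B)\neq 0$) under which the quotient is nontrivial, since otherwise $M\setminus I=N\setminus I$ and $\iota$ is already an equality. So the hypothesis ``$M\subsetneq N$'' is to be read in this sharper form $M\cup I\subsetneq N$, and under the standing assumption that $M$ and $N$ are affine positive monoids I would first record this reduction (or, if it is simpler, adopt $M\cup I\subsetneq N$ as the precise hypothesis) before running the last paragraph. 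Apart from that point, the whole proof is the bookkeeping with unique $B$-linear expansions set up in the first two paragraphs.
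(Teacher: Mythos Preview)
Your approach is essentially the same as the paper's, only more direct: the paper reduces the triviality of $G/H$ to the equality $\frac{\Nil(B)[M]}{(I\cap M)\Nil(B)[M]}\cong\frac{\Nil(B)[N]}{I\Nil(B)[N]}$, then invokes Theorem~\ref{Nil-Thm} and Corollary~\ref{Coro-Nil} (with $A=B$) to convert this into ``$\Nil(B)=0$ or $M=N$'', whereas you bypass those results and compare the free $\Nil(B)$-modules on $M\setminus I$ and $N\setminus I$ by hand. The core computation is identical; your version simply unpacks what Corollary~\ref{Coro-Nil} is doing.

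More importantly, you have correctly put your finger on a genuine subtlety that the paper's own proof glosses over. The ``only if'' direction, as stated, is not literally true: the quotient $G/H$ is trivial precisely when $\Nil(B)=0$ or $N\setminus I\subseteq M$ (equivalently $M\cup I=N$), and the latter can happen with $M\subsetneq N$ --- for instance $M=\{0,2,3,4,\ldots\}\subset N=\mathbb{Z}_{+}$ with $I$ the maximal ideal $N\setminus\{0\}$. The paper's argument inherits exactly this gap from the proof of Corollary~\ref{Coro-Nil}, where the claim ``if $M\subsetneq N$ there are more summands'' tacitly assumes $(N\setminus I)\not\subseteq M$. Your suggestion to sharpen the hypothesis to $M\cup I\subsetneq N$ (or equivalently to state the conclusion as ``$B$ is reduced or $N\setminus I\subseteq M$'') is the right repair, and it is worth tracing through how this affects the downstream application in Theorem~\ref{MainTheorem-1}(d)(iii).
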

 \begin{proof}
  Let the group $\frac{1+\frac{\Nil(B)[N]}{I\Nil(B)[N]}}{1+\frac{\Nil(B)[M]}{(I\cap M)\Nil(B)[M]}}$ be trivial. Then either $\Nil(B) = 0$ or $\frac{\Nil(B)[M]}{(I\cap M)\Nil(B)[M]}  \cong \frac{\Nil(B)[N]}{I\Nil(B)[N]}$. Applying Theorem \ref{Nil-Thm} and then by Corollary \ref{Coro-Nil} we get either $\Nil(B)=0$ or $M=N$.
  \par
  Conversely, assume that $\Nil(B)=0$ or $M=N$. Then in either case the group $\frac{1+\frac{\Nil(B)[N]}{I\Nil(B)[N]}}{1+\frac{\Nil(B)[M]}{(I\cap M)\Nil(B)[M]}}$ is trivial. 
 $\hfill \gj$
 \end{proof}

\subsection{Main Theorem}

The following result due to Sarwar \cite[Theorem 1.2]{Sarwar} is essential to our subsequent theorem.
 \begin{theorem}\cite[Theorem $1.2$]{Sarwar}
  \label{Sarwar-Main-Thm}
  Let $A\subset B$ be an extension of rings and $M \subset N$ an extension of positive monoids.

  \begin{enumerate}
      \item[(a)]
    If $A[M]$ is subintegrally closed in $B[N]$ and $N$ is affine, then $\mathcal{I}(A, B) \cong \mathcal{I}(A[M], B[N])$.
      \item[(b)]
    If $B$ is reduced, $A$ is subintegrally closed in $B$ and $M$ is subintegrally closed in $N$, then $\mathcal{I}(A, B) \cong \mathcal{I}(A[M], B[N])$.

      \item[(c)] 
    If $M = N$, then the reduced condition on $B$ is not needed i.e. if $A$ is subintegrally closed in $B$, then $\mathcal{I}(A, B) \cong \mathcal{I}(A[M], B[M])$.

      \item[(d)] 
    (Converse of (a),(b) and (c)) If $\mathcal{I}(A, B) \cong \mathcal{I}(A[M], B[N])$, then (i) $A$ is subintegrally closed in
$B$, (ii) $A[M]$ is subintegrally closed in $B[N]$, and (iii) $B$ is reduced or $M = N$.
  \end{enumerate}
\end{theorem}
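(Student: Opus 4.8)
\textbf{Proof idea for Theorem \ref{Sarwar-Main-Thm}.}
The natural morphism of ring extensions $p\colon(A\subset B)\to(A[M]\subset B[N])$ induces, via the functor $\CI$ of Definition \ref{Weakly subintegral extensions}(\ref{RingExtn:Item5}), a homomorphism $\iota:=\CI(p)\colon\CI(A,B)\to\CI(A[M],B[N])$, $\iota(I)=I\,A[M]$, and every clause of the statement is an assertion about this one map. I would first dispose of injectivity, then reduce surjectivity to two separate questions about units and about Picard groups by means of the Roberts--Singh sequence, and finally obtain the converse by reversing the same analysis. Injectivity of $\iota$ is unconditional: since $M$, hence $N$, is positive, every element of $A[M]$ has a well-defined constant term lying in $A$, so an element of $B\subseteq B[N]$ that lies in $A[M]$ already lies in $A$; thus $\iota(I)=A[M]$ forces $I=I\cdot 1\subseteq A[M]$, i.e.\ $I\subseteq A$, and applying this to $I^{-1}$ (as $\iota$ is a group homomorphism) gives $I^{-1}\subseteq A$, whence $I^{-1}=A$ and $I=A$. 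So the content of (a)--(c) is the surjectivity of $\iota$, and the content of (d) is that surjectivity of $\iota$ forces the three conclusions.

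Next I would superimpose the exact sequence of Proposition \ref{Lmexactseq} for $A\subset B$ on the one for $A[M]\subset B[N]$, producing a commutative ladder with exact rows; a five-lemma-style chase then reduces ``$\iota$ is an isomorphism'' to \textbf{(U)} the induced map $\coker\big(\U(A)\to\U(B)\big)\to\coker\big(\U(A[M])\to\U(B[N])\big)$ being an isomorphism, and \textbf{(P)} the induced map $\ker\big(\Pic(A)\to\Pic(B)\big)\to\ker\big(\Pic(A[M])\to\Pic(B[N])\big)$ being an isomorphism. For (U), using $\Nil(R[M])=\Nil(R)[M]$ and positivity one has $\U(R[M])=\U(R)\times\big(1+\big(\Nil(R)[M]\big)_{>0}\big)$ with $(\cdot)_{>0}$ the non-constant part, so (U) amounts to triviality of $\big(1+\big(\Nil(B)[N]\big)_{>0}\big)\big/\big(1+\big(\Nil(A)[M]\big)_{>0}\big)$. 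In each of (a)--(c) the ring $A$ is subintegrally closed in $B$ --- in (a) by the constant-term argument applied to the hypothesis that $A[M]$ is subintegrally closed in $B[N]$, in (b),(c) by assumption --- and this forces $\Nil(A)=\Nil(B)$, because adjoining a nilpotent of $B$ to $A$ is a subintegral extension, so $\Nil(B)\subseteq A_{B}^{s}=A$. Hence (U) holds exactly when $\Nil(B)[M]=\Nil(B)[N]$, i.e.\ when $B$ is reduced or $M=N$: this is the hypothesis in (b) and (c), and in (a) it is forced, since $A[M]$ subintegrally closed in $B[N]$ gives $\Nil(A[M])=\Nil(B[N])$ --- which is precisely conclusion (d)(iii).

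Assertion (P) is the crux and is where affineness is used. I would first reduce to $N$ affine: $N$ is the directed union of its finitely generated (hence affine) submonoids $N_\lambda$, with $M_\lambda:=M\cap N_\lambda$ still positive and still subintegrally closed in $N_\lambda$ when $M$ is subintegrally closed in $N$; since invertible modules are finitely generated, $\CI(A[M],B[N])=\varinjlim_\lambda\CI(A[M_\lambda],B[N_\lambda])$, so one may assume $N$ affine. Then one embeds $N$, and with it $M$, into a free monoid $\BZ_{\geq 0}^{k}$ --- a positive affine monoid sits in a pointed rational cone whose dual supplies the needed integral functionals --- so that $A[M]\subseteq B[N]\subseteq B[X_1,\dots,X_k]$, with $B[N]$ and $A[M]$ graded by total degree and degree-zero parts $B$ and $A$. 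Injectivity of the map in (P) comes from the graded retractions $A[M]\to A$, $B[N]\to B$ (sending $N\setminus\{1\}$ to $0$) exactly as in Lemma \ref{LmUnits}; surjectivity is a homotopy-invariance statement: a graded invertible submodule can be untwisted along the grading onto its degree-zero part precisely when $A$ is subintegrally closed in $B$. This is the Swan--Traverso mechanism, in the relative form $\CI(A,B)\cong\CI(A[X],B[X])$ proved by Sadhu--Singh for polynomial extensions and extended to monoid algebras through the work of Anderson and Gubeladze; it uses no reducedness, which is why (c) needs none. Granting (U) and (P), the ladder chase proves (a); (b) follows by reducing to $N$ affine and invoking Corollary \ref{Lm4Coro} to put oneself in case (a); and (c) follows because $M=N$ makes the obstruction in (U) vanish.

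For the converse (d): injectivity of $\iota$ is free, so assume $\iota$ surjective. The retraction above makes $\Pic(A)\to\Pic(A[M])$ injective and identifies $\ker(\cl)$ inside each $\CI$ with the subgroup of principal invertible submodules; pulling a principal invertible $A[M]$-submodule of $B[N]$ back through $\iota$ then forces the unit cokernels to match, which by the analysis of (U) forces $\Nil(A)[M]=\Nil(B)[N]$, hence $B$ reduced or $M=N$ --- this is (iii). For (i), if $A$ were not subintegrally closed in $B$ there would be $b\in B\setminus A$ with $b^2,b^3\in A$, and Swan's construction in the Sadhu--Singh relative form (applied after the polynomial embedding) produces an invertible $A[M]$-submodule of $B[N]$ outside the image of $\iota$, a contradiction; (ii) is the same argument starting from $f\in B[N]\setminus A[M]$ with $f^2,f^3\in A[M]$. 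The step I expect to be the main obstacle is the surjectivity half of (P): establishing that subintegral-closedness of the degree-zero extension $A\subset B$ is exactly what trivialises the graded invertible submodules of $A[M]\subset B[N]$, and in particular propagating this from the free case $N=\BZ_{\geq 0}^{k}$ to an arbitrary affine pair $M\subseteq N$; the constant-term bookkeeping, the $\Nil$ identities, the direct-limit reduction, and the embedding into a polynomial extension are all routine once that input is in place.
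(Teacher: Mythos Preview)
The paper does not prove Theorem \ref{Sarwar-Main-Thm}; it is quoted verbatim from \cite[Theorem 1.2]{Sarwar} as an external input to the paper's own Theorem \ref{MainTheorem-1}. Indeed, the proof of Theorem \ref{MainTheorem-1}(a) invokes Theorem \ref{Sarwar-Main-Thm}(a) in the base case $n=1$ of its induction, and parts (b), (c), (d) of Theorem \ref{MainTheorem-1} likewise lean on it, so nothing in the present paper constitutes an independent argument for the cited statement. There is therefore no proof here against which to compare your proposal.

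That said, your framework is a reasonable route to a direct proof. The injectivity of $\iota$ via the augmentation retraction $(A\subset B)\to(A[M]\subset B[N])\to(A\subset B)$ is correct and unconditional for positive $N$; the reduction via the Roberts--Singh ladder (Proposition \ref{Lmexactseq}) to a unit-cokernel condition (U) and a Picard-kernel condition (P) is sound; and your analysis of (U) through $\Nil(A)=\Nil(B)$ and the decomposition $\U(R[M])=\U(R)\times\big(1+\Nil(R)[M]_{>0}\big)$ is correct. The genuine content, as you yourself flag, sits entirely in (P), and here your sketch is too thin to count as a proof: the sentence ``a graded invertible submodule can be untwisted along the grading onto its degree-zero part precisely when $A$ is subintegrally closed in $B$'' is exactly the assertion to be established, and citing Sadhu--Singh for the polynomial case together with unspecified Anderson--Gubeladze machinery does not by itself propagate to an arbitrary positive affine pair $M\subset N$. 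If you want to compare your outline to an actual argument, you will have to consult \cite{Sarwar} directly.
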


\begin{theorem}\label{MainTheorem-1}
Let $A\subset B$ be an extension of rings and $M\subset N$ an extension of positive monoids. Let $I$ be a radical ideal in $N$.
  \begin{enumerate}
  \item[(a)]
   If $\frac{A[M]}{(I\cap M)A[M]}$ is subintegrally closed in $\frac{B[N]}{IB[N]}$ and $N$ is affine, then $\mathcal{I}(A, B)\cong \mathcal{I}\left(\frac{A[M]}{(I\cap M)A[M]}, \frac{B[N]}{IB[N]}\right)$.
   \item[(b)]
   If $B$ is reduced, $A$ is subintegrally closed in $B$, and $M$ is subintegrally closed in $N$, then $\mathcal{I}(A, B)\cong \mathcal{I}\left(\frac{A[M]}{(I\cap M)A[M]}, \frac{B[N]}{IB[N]}\right)$.
   \item[(c)]
   If $M=N$, and $A$ is subintegrally closed in $B$, then $\mathcal{I}(A, B)\cong \mathcal{I}\left(\frac{A[M]}{IA[M]}, \frac{B[M]}{IB[M]}\right)$.
   \item[(d)]
   If $\mathcal{I}(A, B)\cong \mathcal{I}\left(\frac{A[M]}{(I\cap M)A[M]}, \frac{B[N]}{IB[N]}\right)$ and $N$ is affine, then (i) $A$ is subintegrally closed in $B$, (ii) $\frac{A[M]}{(I\cap M)A[M]}$ is subintegrally closed in $\frac{B[N]}{IB[N]}$, (iii) $B$ is reduced or $M=N$.
\end{enumerate}
\end{theorem}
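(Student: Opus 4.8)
Write $\Lambda=\frac{A[M]}{(I\cap M)A[M]}$ and $\Gamma=\frac{B[N]}{IB[N]}$. The inclusions $A\hookrightarrow A[M]$, $B\hookrightarrow B[N]$ followed by the quotient maps give a morphism of ring extensions $\iota\colon(A\subset B)\to(\Lambda\subset\Gamma)$, and applying the functor $\CI$ yields $\eta=\CI(\iota)\colon\CI(A,B)\to\CI(\Lambda,\Gamma)$; parts (a),(b),(c) assert that $\eta$ is an isomorphism, and (d) is the converse. Since $M$, $N$ are positive and $I$ is proper (so $1\notin I$), the monoid augmentations $M\to\{1\}$, $N\to\{1\}$ descend to ring maps $\Lambda\to A$, $\Gamma\to B$ giving a retraction $\epsilon\colon(\Lambda\subset\Gamma)\to(A\subset B)$ with $\epsilon\iota=\mathrm{id}$; hence $\eta$ is always a split monomorphism, in particular injective, so in (a),(b),(c) only surjectivity is at stake. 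Moreover the properties ``$C$ subintegrally closed in $D$'', ``$A$ subintegrally closed in $B$'', and ``$B$ reduced or $M=N$'' are each inherited by retracts in the category of ring extensions, a fact I will use repeatedly.

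\textbf{Part (a).} The key reduction is to prime ideals: if $I=\mathfrak p$ is prime then $\Lambda\cong A[M']$, $\Gamma\cong B[N']$ with $M'=M\setminus(\mathfrak p\cap M)$ and $N'=N\setminus\mathfrak p$ positive monoids, $N'$ affine whenever $N$ is (faces of affine monoids are affine, \cite{B-G}), and all four assertions become Theorem~\ref{Sarwar-Main-Thm}. For a general radical ideal $I=\mathfrak p_1\cap\dots\cap\mathfrak p_n$ (irredundant), set $J=\mathfrak p_2\cap\dots\cap\mathfrak p_n$ and form the Milnor square in the category of ring extensions used in the proof of Lemma~\ref{Lm2}, with corner $(\Lambda\subset\Gamma)$ and remaining vertices the analogous quotients by $J$, $\mathfrak p_1$, $J\cup\mathfrak p_1$ (write $\Lambda_{(-)},\Gamma_{(-)}$ for these). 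Its right vertical arrow $q_1$ is a split surjection: the rule ``monomial $n\notin J\cup\mathfrak p_1\ \mapsto\ n$'' defines a ring homomorphism $\Gamma_{J\cup\mathfrak p_1}\to\Gamma_J$ splitting $q_1$ — multiplicativity holds because, $\mathfrak p_1$ being prime, whenever a product of two such monomials falls into $J\cup\mathfrak p_1$ it in fact falls into $J$, so both sides vanish — and it restricts to a section on the $A$-side. Hence Lemma~\ref{7l1} gives an exact sequence
\[
1\longrightarrow\CI(\Lambda,\Gamma)\xrightarrow{\ \phi\ }\CI(\Lambda_J,\Gamma_J)\oplus\CI(\Lambda_{\mathfrak p_1},\Gamma_{\mathfrak p_1})\xrightarrow{\ \psi\ }\CI(\Lambda_{J\cup\mathfrak p_1},\Gamma_{J\cup\mathfrak p_1})\longrightarrow1 .
\]
The canonical maps $\eta_J,\eta_{\mathfrak p_1},\eta_{J\cup\mathfrak p_1}$ out of $\CI(A,B)$ satisfy $\phi\eta=(\eta_J,\eta_{\mathfrak p_1})$ and $\CI(q_1)\eta_J=\eta_{J\cup\mathfrak p_1}=\CI(q_2)\eta_{\mathfrak p_1}$, so a short diagram chase shows that $\eta$ is an isomorphism once $\eta_J,\eta_{\mathfrak p_1}$ are isomorphisms and $\eta_{J\cup\mathfrak p_1}$ is injective. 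Now $\eta_{J\cup\mathfrak p_1}$ is a split monomorphism by the opening remark ($J\cup\mathfrak p_1$ is again a proper radical ideal); $(\Lambda_{\mathfrak p_1}\subset\Gamma_{\mathfrak p_1})$ is a retract of $(\Lambda\subset\Gamma)$ (the section $n\mapsto n$ on monomials off $\mathfrak p_1$ is multiplicative, a face being a submonoid), so $\Lambda_{\mathfrak p_1}=A[M']$ is subintegrally closed in $B[N']$ and $\eta_{\mathfrak p_1}$ is an isomorphism by Theorem~\ref{Sarwar-Main-Thm}; likewise $\Lambda_{\mathfrak p_j}$ is subintegrally closed in $\Gamma_{\mathfrak p_j}$ for every $j$, whence the patching argument in the proof of Lemma~\ref{Lm2} (which needs only the single-prime cases as input) shows $\Lambda_J$ is subintegrally closed in $\Gamma_J$, so $\eta_J$ is an isomorphism by the inductive hypothesis on $n$. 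This proves (a).

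\textbf{Parts (b), (c), (d).} For (b) one first reduces, by writing $M\subset N$ as a filtered union of affine sub-extensions carrying the relevant finite parts of $I$ and using that $\CI,\U,\Pic$ commute with filtered colimits, to the affine case; there Lemma~\ref{Lm2} (together with $B$ reduced $\Rightarrow A$ reduced) furnishes ``$\tfrac{A[M]}{(I'\cap M)A[M]}$ subintegrally closed in $\tfrac{B[N]}{I'B[N]}$'' for \emph{every} radical ideal $I'$, so the standing hypothesis of (a) holds at every stage and the induction above applies verbatim. For (c), $B$ may be non-reduced, so Lemma~\ref{Lm2} is unavailable; instead observe that $A$ subintegrally closed in $B$ forces $\Nil(A)=\Nil(B)\subseteq A$, hence $A/\Nil(A)\subseteq B/\Nil(B)$ is again a subintegrally closed extension of reduced rings, and by Theorem~\ref{Nil-Thm} the same nilradicals appear on the monoid level, $\Nil(\Lambda)=\Nil(\Gamma)=\tfrac{\Nil(B)[M]}{I\Nil(B)[M]}$; comparing the exact sequences of Proposition~\ref{Lmexactseq} for $(A\subset B)$, $(\Lambda\subset\Gamma)$ and their reductions (Picard being a nil-invariant, the unit quotients unchanged since the two unit columns have equal kernels $1+\Nil$) identifies $\CI(A,B)\cong\CI(A/\Nil A,B/\Nil B)$ and $\CI(\Lambda,\Gamma)\cong\CI(\Lambda/\Nil\Lambda,\Gamma/\Nil\Gamma)$, and the reduced case (b), with $M=N$, finishes (c). Finally (d): the retraction $\epsilon$ together with Theorem~\ref{Sarwar-Main-Thm}(d) yields (i) $A$ subintegrally closed in $B$; then $\Nil(A)=\Nil(B)$, and reading off the unit and Picard pieces of the isomorphism $\eta$ via Proposition~\ref{Lmexactseq} forces the group of Lemma~\ref{Ker-Main Theorem} to be trivial, giving (iii); and (ii) follows from (i), (iii) via Lemma~\ref{Lm2} / Lemma~\ref{Lm3}.

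\textbf{Main obstacle.} The technical heart is part (a): pinning down the correct Milnor square of ring extensions and, above all, verifying that $q_1$ is a \emph{split} surjection — this is precisely where primeness of $\mathfrak p_1$ is indispensable and where a naive ``restrict to a sub-basis'' map fails to be multiplicative — and then arranging that the inductive hypothesis can be fed back in, which succeeds only because single-prime quotients are retracts of the $I$-quotient and Lemma~\ref{Lm2}'s patching upgrades ``subintegrally closed at each single prime'' to ``subintegrally closed at $J$''. A secondary difficulty is the passage to the affine (respectively reduced) situation in parts (b) and (c), which must precede any use of Theorem~\ref{Nil-Thm}, Lemma~\ref{LmUnits}, and Corollary~\ref{Coro-Nil}.
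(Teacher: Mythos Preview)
Your treatment of parts (a) and (b) follows the paper's approach: induction on the number of prime components of $I$, with the Milnor square of Lemma~\ref{Lm2} and the exact sequence of Lemma~\ref{7l1} driving the inductive step. You are in fact more careful than the paper in justifying why the inductive hypothesis applies to $J$: your retract argument shows each $(\Lambda_{\mathfrak p_j}\subset\Gamma_{\mathfrak p_j})$ is subintegrally closed, and then the patching of Lemma~\ref{Lm2}'s proof upgrades this to $J$. The paper simply asserts ``by the induction hypothesis, $\theta_2$ is an isomorphism''.

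For part (c) you take a different route. The paper invokes Lemma~\ref{Lm3}, whose stated hypothesis includes ``$B$ reduced'', which (c) does not assume, so the paper's citation is at best imprecise. Your argument---pass to $\bar A=A/\Nil(A)\subset\bar B=B/\Nil(B)$ using $\Nil(A)=\Nil(B)$, identify the nilradicals on the quotient level via Theorem~\ref{Nil-Thm} and \cite[Proposition 2.6]{Roberts and Singh}, then appeal to the reduced case (b)---is cleaner and avoids this issue.

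However, your part (d) has a genuine gap. For (d)(ii) you claim it follows from (i) and (iii) via Lemma~\ref{Lm2} or Lemma~\ref{Lm3}, but neither applies: if (iii) gives ``$B$ reduced'', Lemma~\ref{Lm2} still requires $M$ subintegrally closed in $N$, which is nowhere in the hypotheses and does not follow from the isomorphism $\eta$; if (iii) gives ``$M=N$'', Lemma~\ref{Lm3} still requires $B$ reduced. Worse, the paper's (d)(iii) actually \emph{uses} (d)(ii), through Lemma~\ref{Lm_Bar}, to establish that $\phi_3$ is an isomorphism; your reversed order (ii)$\Leftarrow$(iii) is therefore circular. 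The paper instead proves (d)(i) and (d)(ii) by direct construction of explicit invertible $\Lambda$-submodules of $\Gamma$: for (d)(ii), given $\bar g\in\Gamma$ with $\bar g^{2},\bar g^{3}\in\Lambda$, one sets $J=(\bar g^{2},1+\bar g+\bar g^{2})$ and $J'=(\bar g^{2},1-\bar g+\bar g^{2})$, checks $JJ'=\Lambda$, and then uses the retraction $\epsilon$ together with the hypothesis that $\eta$ is an isomorphism to force $J=\Lambda$, whence $\bar g\in\Lambda$. This explicit step is what your sketch is missing. Your one-line invocation of Theorem~\ref{Sarwar-Main-Thm}(d) for (d)(i) is likewise insufficient---that theorem concerns $\CI(A[M],B[N])$, not the quotient $\CI(\Lambda,\Gamma)$---though the same explicit-module construction (with $J=(\bar b^{2},1-\overline{bm})$ for $m\in M\setminus I$) transplants directly and is what the paper does.
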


\begin{proof}
 (a)  Since $I$ is a radical ideal,
 by \cite[Proposition 2.36(a)]{B-G}, we can assume that $I=\p_1\cap \ldots \cap \p_n$ where $\mathfrak{p_i}$ are prime ideals in $N$. We prove the result by induction on $n$. If $n=1$, then $I=\p_1$. Hence by \cite[Proposition 2.36(b)]{B-G}, $I=N\setminus (F\cap N)$, where $F$ is a face of $\BR_+N$ (here we are writing $N$ additively). 
    Now we have $\frac{B[N]}{I B[N]} \cong$ $B[N \cap F]$ because both sides have $B$-basis those elements of $N$ not in $I$. The map  $\frac{A[M]}{(I \cap M) A[M]} \hookrightarrow \frac{B[N]}{I B[N]}$ is an inclusion because $A \subset B$ and the $A$-basis of the left hand side is a subset of the $A$-basis of the right hand side. We also have an isomorphism $\frac{A[M]}{(I \cap M) A[M]} \cong A[M \cap F]$ because again both sides are the free $A$-module with basis those elements $n \in M$ that are not contained in $I$. The hypothesis says that $A[M \cap F]$ is subintegrally closed in $B[N \cap$ $F]$. So apply Theorem \ref{Sarwar-Main-Thm}(a) with $A \subset B$ and $M \cap$ $F \subset N \cap F$ an extension of positive monoids and $N \cap F$ affine. This yields $\mathcal{I}(A, B) \cong \mathcal{I}(A[M \cap F], B[N \cap F])$. This establishes the case $n=1$.

 
 Assume that $n>1$.  Let $J=\p_2\cap \ldots \cap \p_n$. Note that $JB[N]+\p_1B[N] = (J\cup \p_1)B[N]$.
 Consider the following Milnor square in the category of ring extensions

\begin{equation}\label{MainMilnorsquare}
    \begin{tikzcd}
\frac{A[M]}{(I\cap M)A[M]}\subset  \frac{B[N]}{IB[N]} \ar[r] \arrow[d, "p_2"] \arrow[r, "p_1"] & \frac{A[M]}{(J\cap M)A[M]}\subset \frac{B[N]}{JB[N]} \arrow[d, "q_1"]  \\
\frac{A[M]}{(\p_1\cap M)A[M]} \subset  \frac{B[ N]}{\p_1B[N]} \arrow[r, "q_2"]              & \frac{A[M]}{((J\cup \p_1)\cap M)A[M]} \subset  \frac{B[N]}{(J\cup \p_1)B[N]}           \arrow[u, shift left=.9ex, "q'_1"]       
\end{tikzcd}
\end{equation}

Consider  the submonoid $N'=N\setminus \mathfrak{p_1}$ and observe that  
\begin{equation}\label{EqnInsideMain}
    \frac{B[N]}{(J\cup \mathfrak{p_1})B[N]}=\frac{B[N']}{(N'\cap J)B[N']}.
\end{equation}
Then $N'\subset N$ induces the following $B$-algebra homomorphisms $B[N']\rightarrow B[N]\rightarrow \frac{B[N]}{JB[N]}$. The  kernel of this composition of $B$-algebras is the ideal $(J\cap N')B[N']$ in $B[N']$, so we have an injective $B$-algebra homomorphism $\frac{B[N']}{(J\cap N')B[N']} \xrightarrow{} \frac{B[N]}{JB[N]}$.
By the identification in equation (\ref{EqnInsideMain}), we get an injective $B$-algebra homomorphism $q'_1: \frac{B[N]}{(J\cup \mathfrak{p_1})B[N]} \xrightarrow{} \frac{B[N]}{JB[N]}$ which is a retraction of the natural quotient map $q_1: \frac{B[N]}{JB[N]} \xrightarrow{} \frac{B[N]}{(J\cup \mathfrak{p_1})B[N]}$, i.e., $q_1\circ q'_1 = \Id$. Also, we have $q_1\circ q'_1 = \Id$ when we restrict to $A[M]/(((J\cup \p_1)\cap M)A[M])$. Thus $q'_1$ is the retraction of $q_1$, which is a homomorphism of ring extensions.

 Applying Lemma \ref{7l1} to the Milnor square (\ref{MainMilnorsquare}) we have the following exact sequence
  {\tiny
 \[
       \xymatrix{
        1\ar@{->}[r] &  \CI\left(\frac{A[M]}{(I\cap M)A[M]},\frac{B[N]}{IB[N]}\right)\ar@{->}[r]^(.35){\phi}  & \CI\left(\frac{A[M]}{(J\cap M)A[M]},\frac{B[N]}{JB[N]}\right)\op\CI\left(\frac{A[M]}{(\p_1\cap M)A[M]},\frac{B[N]}{\p_1B[N]}\right)\ar@{->}[r]^(.6){\psi}  &\CI\left(\frac{A[M]}{((J\cup \p_1)\cap M)A[M]},\frac{B[N]}{(J\cup \p_1)B[N]}\right),
       }
      \] }   
   where maps $\phi$ and $\psi$ as defined in Lemma~\ref{7l1}.
\par
  Let us consider the following commutative diagram  with exact rows 
 {\tiny
 \[
       \xymatrix{
       1\ar@{->}[r]  & \CI(A,B) \ar@{->}[r]^{\phi_1}\ar@{->}[d]^{\gt_1} &
       \CI(A,B)\op \CI(A,B)\ar@{->}[r]^{\psi_1}\ar@{->}[d]^{\gt_2} & \CI(A,B)\ar@{->}[d]^{\gt_3} &\\
      1\ar@{->}[r]  & \CI\left(\frac{A[M]}{(I\cap M)A[M]},\frac{B[N]}{IB[N]}\right)\ar@{->}[r]^(.34){\phi}& \CI\left(\frac{A[M]}{(J\cap M)A[M]},\frac{B[N]}{JB[N]}\right)\op
      \CI\left(\frac{A[M]}{(\p_1\cap M)A[M]},\frac{B[N]}{\p_1B[N]}\right)\ar@{->}[r]^(.6){\psi}& \CI\left(\frac{A[M]}{((J\cup \p_1)\cap M)A[M]},\frac{B[N]}{(J\cup \p_1)B[N]}\right),
       }
      \]
      }
where maps $\phi_1$, $\phi$, $\psi_1$, $\psi$  as defined in Lemma \ref{7l1}. The map $\theta_1$ is induced by the following morphism of ring extension $(A,B)\longrightarrow \left(\frac{A[M]}{(I\cap M)A[M]},\frac{B[N]}{IB[N]}\right)$,  $\theta_2$ is induced by the following morphisms of ring extensions $(A,B)\longrightarrow \left(\frac{A[M]}{(J\cap M)A[M]},\frac{B[N]}{JB[N]}\right)$ and  $(A,B)\longrightarrow \left(\frac{A[M]}{(\p_1\cap M)A[M]},\frac{B[N]}{\p_1B[N]}\right)$ and $\theta_3$ is induced by the following morphism of ring extension $(A,B)\longrightarrow \left(\frac{A[M]}{((J\cup\p_1)\cap M)A[M]},\frac{B[N]}{(J\cup \p_1)B[N]}\right)$. 
      
By the induction hypothesis, $\gt_2$ is an isomorphism. We have to prove $\theta_1$ is an isomorphism. First, observe that the composition of the maps $(A,B)\longrightarrow \left(\frac{A[M]}{(I\cap M)A[M]},\frac{B[N]}{IB[N]}\right)\longrightarrow (A,B)$ is the identity, where the first map is inclusion and the second map is the natural projection $(N\xrightarrow{} 1)$. Now applying the functor $\mathcal{I}$, we get that the map $\theta_1$ is injective. Similarly the map $\gt_3$ is also injective.

To prove $\gt_1$ is surjective, let $I'\in \mathcal{I}\left(\frac{A[M]}{(I\cap M)A[M]},\frac{B[N]}{IB[N]}\right)$, and let $\phi(I') = I_{1}$. Because $\gt_2$ is an isomorphism, there exists $I_{2}\in \CI(A,B)\oplus \CI(A,B)$ such that $\gt_2(I_{2}) = I_{1}$. By commutativity of the upper right square we have $\gt_3\psi_1(I_2) = \psi \gt_2(I_2)$. Hence $\gt_3 \psi_1(I_2) = \psi\gt_2(I_2) = \psi(I_1) = \psi\phi(I') = 1$. Since $\gt_3$ is injective we conclude that $\psi_1(I_2) = 1$. By exactness of the first row
there exists $I_3\in \CI(A,B)$ such that $ \phi_1(I_3) = I_2$.  By commutativity of the  square  with top edge $\phi_1$ we have 
 \[
 \begin{split}
\phi(I'-\gt_1(I_3)) &=\phi(I')-\phi\gt_1(I_3)\\ 
  &= \phi(I')-\gt_2\phi_1(I_3)\\
   &=\phi(I')-\gt_2(I_2)\\
   &= I_1-I_1\\
   &= 0.
 \end{split}
 \]
Since $\phi$ is injective, we have $\gt_{1}(I_3) = I'$. Hence, the map $\theta_1$ is surjective. This completes the proof that $\theta_1$ is an isomorphism.

\vspace{.3cm}
\par
(b) First, we assume $N$ is affine. Since $B$ is reduced by Lemma \ref{Lm2}, we get $\frac{A[M]}{(I\cap M)A[M]}$ is subintegrally closed in $\frac{B[N]}{IB[N]}$. Hence by part (a), we have $\mathcal{I}(A, B)\cong \mathcal{I}\left(\frac{A[M]}{(I\cap M)A[M]}, \frac{B[N])}{IB[N]}\right)$.\\
 If $N$ is not affine, we can write $N$ as the filtered union of 
 affine monoids. Then, this part can be proved in a similar way as in the proof of  \cite[Theorem 1.2(b)]{Sarwar}. We leave out the details for the reader.
\vspace{.3cm}
\par
(c) Since $A$ is subintegrally closed in $B$,  by Lemma \ref{Lm3}, we have
$\frac{A[M]}{IA[M]}$ is subintegrally closed in $\frac{B[M]}{IB[M]}$.
Hence by part (a),
we have $\mathcal{I}(A, B)\cong \mathcal{I}\left(\frac{A[M]}{IA[M]}, \frac{B[M]}{IA[M]}\right)$ for the case when $M$ is affine. If $M$ is not affine, then we can write $M$ as a filtered union of affine monoids and proceed as in part (b).
\vspace{.3cm}
\par
(d)(i) To prove $A$ is subintegrally closed in $B$, let $b\in B$ with $b^2, b^3\in A$. Let $m\in M\setminus I$. Let $J:= (\overline{b}^2, 1-\overline{bm})$ and $J':= (\overline{b}^2, 1+\overline{bm})$ be two $\frac{A[M]}{(I\cap M)A[M]}$-submodule of $\frac{B[N]}{IB[N]}$. It is clear that $JJ'\subset \frac{A[M]}{(I\cap M)A[M]}$ and $(1-\overline{bm})(1+\overline{bm})(1+\overline{b^2m^2})=1-\overline{b^4m^4}\in JJ'$. Hence $1=\overline{b^4m^4}+1-\overline{b^4m^4}\in JJ'$ i.e. $JJ'=\frac{A[M]}{(I\cap M)A[M]}$. Consequently, $J\in \mathcal{I}\left(\frac{A[M]}{(I\cap M)A[M]}, \frac{B[N]}{IB[N]}\right)$. Let $\pi$ be the natural surjection from $\frac{B[N]}{IB[N]}$ to $B$ induced by the surjection $B[N]\rightarrow B$ sending $N\setminus \{1\}\rightarrow0$. Then $\CI(\pi)(J)=A$. By hypothesis $\CI(\pi)$ is an isomorphism, hence $J=\frac{A[M]}{(I\cap M)A[M]}$. Therefore $b\in A$. Hence $A$ is subintegrally closed in $B$.

 \vspace{.3cm}
\par
 (d)(ii) One can prove it in similar lines as in part (d)(ii) of Theorem \ref{Sarwar-Main-Thm}. However we provide a proof for the shake of completeness. Let $\overline{g}\in \frac{B[N]}{IB[N]}$ such that $\overline{g}^2, \overline{g}^3\in \frac{A[M]}{(I\cap M)A[M]}$. Let $J:=(\overline{g}^2, 1+\overline{g}+\overline{g}^2)$ and Let $J':=(\overline{g}^2, 1-\overline{g}+\overline{g}^2)$ be two $\frac{A[M]}{(I\cap M)A[M]}$-submodule of $\frac{B[N]}{IB[N]}$.  It is clear that $JJ'\subset \mathcal{I}(\frac{A[M]}{(I\cap M)A[M]}$. Then $(1+\overline{g}+\overline{g}^2)(1-\overline{g}+\overline{g}^2)=1+\overline{g}^2+\overline{g}^4)\in JJ'$, which implies $1+\overline{g}^2\in JJ'$. Thus we have $1=(1+\overline{g}^2)(1-\overline{g}^2)+\overline{g}^4\in JJ'$, and hence $JJ'=\frac{A[M]}{(I\cap M)A[M]}$.
Consequently, $J\in \mathcal{I}(\frac{A[M]}{(I\cap M)A[M]}, \frac{B[N]}{IB[N]})$. Let $\pi$ be the natural surjection from $\frac{B[N]}{IB[N]}$ to $B$ induced by the surjection $B[N]\rightarrow B$ sending $N\setminus \{1\}\rightarrow0$. Set $\pi(\overline{g})=b\in B$. Then $\CI(\pi)(J)=(b^2, 1+b+b^2)$. Since $\overline{g}^2, \overline{g}^3\in \frac{A[M]}{(I\cap M)A[M]}$, we must have $b^2, b^3\in A$. But by (i) $A$ is subintegrally closed in $B$, thus we get $b\in A$. Hence $\CI(\pi)(J')$ and $\CI(\pi)(J')$ both contained in $A$. Therefore $\CI(\pi)(J)=A$, which implies $J=\frac{A[M]}{(I\cap M)A[M]}$ and hence $\overline{g}\in \frac{A[M]}{(I\cap M)A[M]}$.
This proves that $\frac{A[M]}{(I\cap M)A[M]}$
is subintegrally closed in $\frac{B[N]}{IB[N]}$.

\vspace{.5cm}
To prove the part $(d)(iii)$, we need the following lemma.
\begin{lemma} \label{Lm_Bar}
Let $A\subset B$ be an extension of rings and $M\subset N$ an extension of cancellative torsion-free positive monoids with $N$ is affine. Let $I$ be a radical ideal in $N$, $\overline{A}=A/\Nil(A)$, and $\overline{B}=B/\Nil(B)$. If $\frac{A[M]}{(I\cap M)A[M]}$ is subintegrally closed in $\frac{B[N]}{IB[N]}$, then $\frac{\overline{A}[M]}{(I \cap M)\overline{A}[M]}$ is subintegrally closed in $\frac{\overline{B}[N]}{I\overline{B}[N]}$.
\end{lemma}
\begin{proof}
Assume that the ring $\frac{A[M]}{(I \cap M)A[M]}$ is subintegrally closed in $\frac{B[N]}{IB[N]}$. Since $N$ is affine, Theorem~\ref{MainTheorem-1}(a) yields an isomorphism $\mathcal{I}(A, B) \cong \mathcal{I}\left(\frac{A[M]}{(I \cap M)A[M]}, \frac{B[N]}{IB[N]}\right).$
 Applying Theorem~\ref{MainTheorem-1}(d)(i), it follows that $A$ is subintegrally closed in $B$.
 We claim that $\Nil(A)=\Nil(B)$. Since $A\subset B$, we always have $\Nil(A)\subset\Nil(B)$. For the other direction, suppose $x \in \operatorname{Nil}(B), x \neq 0$. Let $r>1$ be the smallest integer such that $x^{r}=0$. Then $x^{i}=0 \in A$ for all $i \geq r$. Thus $\left(x^{r-1}\right)^{2} \in A$ and $\left(x^{r-1}\right)^{3} \in A$. Since $A$ is subintegrally closed in $B$, we get $x^{r-1} \in A$. The decreasing induction can be continued until we get $x \in A$, which proves our claim $\Nil(A)=\Nil(B)$.
\par
    Next we claim that $A/\Nil(A)$ is subintegrally closed in $B/\Nil(B)$. To see this, suppose $\overline{b} = b + \Nil(B) \in \overline{B}$ be such that $\overline{b}^2, \overline{b}^3 \in \overline{A}$. Then we have
\[
\overline{b}^2 = b^2 + \Nil(B) \in \overline{A},
\]
so there exists $u \in A$ such that $b^2 + \Nil(B) = u + \Nil(A)$. Since $\Nil(A) = \Nil(B)$ in our setting, it follows that
\[
b^2 + \Nil(A) = u + \Nil(A),
\]
and hence $b^2 \in A$. Similarly, we get $b^3 \in A$. As $A$ is subintegrally closed in $B$, it follows that $b \in A$. Consequently,
\[
\overline{b} = b + \Nil(B) = b + \Nil(A) \in \overline{A}.
\]
This proves that $\overline{A}$ is subintegrally closed in $\overline{B}$.

\par
  Now, since $I$ is a radical ideal, by \cite[Proposition 2.36]{B-G} we can assume that $I=\p_1\cap \ldots \cap \p_n$, where $\mathfrak{p_i}$'s are prime ideals in $N$. We prove the result by induction on $n$. If $n=1$, then $I=\mathfrak{p}_{1}$. Let $N^{\prime}=N \backslash \mathfrak{p}_{1}$, and $M^{\prime}=M \backslash\left(\mathfrak{p}_{1} \cap M\right)$. Then $N^{\prime}$ and $M^{\prime}$ are commutative cancellative torsion-free submonoid of $N$ and $M$ respectively. 
 Observe that $\frac{A[M]}{(I \cap M) A[M]} \cong A\left[M^{\prime}\right]$ because both sides are the free $A$-module with basis those elements $m \in M$ that are not contained in $I \cap M$. Similarly we have $B\left[N^{\prime}\right] \cong \frac{B[N]}{I B[N]}$.
    Since by assumption  $\frac{A[M]}{(I\cap M)A[M]}$ is subintegrally closed in $\frac{B[N]}{IB[N]}$, in other words $A\left[M^{\prime}\right]$ is subintegrally closed in $B\left[N^{\prime}\right]$. Since $M' \subset N'$ is an extension of commutative cancellative torsion-free monoids, by \cite[see after Question~1.1]{Sarwar} it follows that $M'$ is subintegrally closed in $N'$.
Moreover, one can observe that
\[
\frac{\overline{A}[M]}{(I \cap M) \overline{A}[M]} \cong \overline{A}[M'] \quad \text{and} \quad \frac{\overline{B}[N]}{I \overline{B}[N]} \cong \overline{B}[N'].
\]
Since $\overline{A} \subset \overline{B}$ is an extension of reduced rings, with $\overline{A}$ subintegrally closed in $\overline{B}$ and $M'$ subintegrally closed in $N'$, it follows from \cite[Theorem~4.79]{B-G} that $\overline{A}[M']$ is subintegrally closed in $\overline{B}[N']$.
Therefore, $\frac{\overline{A}[M]}{(I \cap M)\overline{A}[M]} \cong \overline{A}[M']$ is subintegrally closed in $\overline{B}[N'] \cong \frac{\overline{B}[N]}{I \overline{B}[N]}$. This completes the proof in the case $n = 1$.
\par
From here onward the proof is essentially similar as in Lemma~\ref{Lm2}; however, we provide the full argument for the shake of completeness.
Assume that $n>1$. Let $J=\mathfrak{p}_{2} \cap \ldots \cap \mathfrak{p}_{n}$. Note that $J \overline{B}[N]+$ $\mathfrak{p}_{1} \overline{B}[N]=\left(J \cup \mathfrak{p}_{1}\right) \overline{B}[N]$. Similarly $(J \cap M) \overline{A}[M]+\left(\mathfrak{p}_{1} \cap M\right) \overline{A}[M]=$ $\left(\left(J \cup \mathfrak{p}_{1}\right) \cap M\right) \overline{A}[M]$. This leads to the following Milnor square in the category of ring extensions (as defined in Definition~\ref{Mil_Sqr_Ring_Ext}
 ):
\[
\xymatrix{
  \frac{\overline{A}[M]}{(I\cap M)\overline{A}[M]}\subset  \frac{\overline{B}[N]}{I\overline{B}[N]} \ar[r]^{p_1} \ar@<-2pt>[d]_{p_2} & \frac{\overline{A}[M]}{(J\cap M)\overline{A}[M]}\subset \frac{\overline{B}[N]}{J\overline{B}[N]} \ar@<-2pt>[d]_{q_1} \\
  \frac{\overline{A}[M]}{(\p_1\cap M)\overline{A}[M]} \subset  \frac{\overline{B}[ N]}{\p_1\overline{B}[N]} \ar[r]^(.38){q_2} & \frac{\overline{A}[M]}{((J\cup \p_1)\cap M)\overline{A}[M]} \subset  \frac{\overline{B}[N]}{(J\cup \p_1)\overline{B}[N]}}
\]
\par
This consists of two Milnor squares in the category of commutative rings, one to the left of the inclusions, and one to the right, with the inclusions $\subset$ giving a map between them. The left Milnor square is constructed as in Example~\ref{LmFibProd_1} 
with $A$ in Example~\ref{LmFibProd_1} 
replaced by $\overline{A}[M], I$ replaced by $(J \cap M) \overline{A}[M]$ and $J$ replaced by $\left(\mathfrak{p}_{1} \cap M\right) \overline{A}[M]$. The right Milnor square is similarly constructed from Example~\ref{LmFibProd_1} 
with $\overline{A}$ replaced by $\overline{B}[N], I$ replaced by $J \overline{B}[N]$ and $J$ replaced by $\mathfrak{p}_{1} \overline{B}[N]$. The upper left $\subset$ was given a name $i_{I}$, which is an inclusion $i_{I}: \frac{\overline{A}[M]}{(I \cap M) \overline{A}[M]} \hookrightarrow \frac{\overline{B}[N]}{I \overline{B}[N]}$. Similarly inclusions $i_{J}, i_{\mathfrak{p}_{1}}$, and $i_{J \cup P_{1}}$ are defined, each indicated by $\subset$ at the appropriate place in the diagram. At the coset level $i_{I}(a+(I \cap M) \overline{A}[M])=a+I \overline{B}[N]$ for some $a \in \overline{A}[M]$. The existence or not of such an $a$ may not be apparent if we are given a coset $b+I \overline{B}[N] \in \frac{\overline{B}[N]}{I \overline{B}[N]}$, and if $a$ exists it will not be unique. And as sets $(a+(I \cap M) \overline{A}[M])$ and its image $a+I \overline{B}[N]$ are different, the first being a copy of $(I \cap M) \overline{A}[M]$ and the second of $I \overline{B}[N]$. To simplify notation we may abusively write $f \in \frac{\overline{B}[N]}{I \overline{B}[N]}$ instead of $f+I \overline{B}[N], f \in \overline{B}[N]$. If $f \in \operatorname{Im}\left(i_{I}\right)$ it is convenient to write $f \in \frac{\overline{A}[M]}{(I \cap M) \overline{A}[M]}$ which will not mean that $f \in \overline{A}[M]$, but rather that the coset $f+I \overline{B}[N]$ contains an element of $\overline{A}[M]$. The restriction of $p_{1}$ to $\frac{\overline{A}[M]}{I \cap M) \overline{A}[M]}$ will be denoted by $p_{1 \mid}$. Similarly we write $p_{2 \mid}, q_{1 \mid}$, and $q_{2\mid}$.
\par
We wish to prove that $\frac{\overline{A}[M]}{(I \cap M) \overline{A}[M]}$ is subintegrally closed in $\frac{\overline{B}[N]}{I \overline{B}[N]}$. This will be the case if every element $f \in \frac{\overline{B}[N]}{I \overline{B} N}$ such that $f^{k} \in \frac{\overline{A}[M]}{(I \cap M) \overline{A}[M]}, k \geq 2$ is already in $\frac{\overline{A}[M]}{(I \cap M) \overline{A}[M]}$. So start with such an $f \in \overline{B}[N]$ (variously thought of as $f \in \frac{\overline{B}[N]}{I \overline{B}[N]}$ or as the coset $f+I \overline{B}[N]$). Then we have $p_{1}(f)=f+J \overline{B}[N]$ and for $k \geq 2$ we have $p_{1}\left(f^{k}\right)=$ $p_{1}(f)^{k} \in \frac{\overline{A}[M]}{(J \cap M) \overline{A}[M]}$. By the induction hypothesis $\frac{\overline{A}[M]}{(J \cap M) \overline{A}[M]}$ is subintegrally closed in $\frac{\overline{B}[N]}{J \overline{B}[N]}$ so $p_{1}(f) \in \frac{\overline{A}[M]}{(J \cap M) \overline{A}[M]}$. Similarly $\frac{\overline{A}[M]}{\left(\left(\mathfrak{p}_{1}\right) \cap M\right) \overline{A}[M]}$ is subintegrally closed in $\frac{\overline{B}[N]}{\mathfrak{p}_{1} \overline{B}[N]}$ so $p_{2}(f) \in \frac{\overline{A}[M]}{\left(\mathfrak{p}_{1} \cap M\right) \overline{A}[M]}$. Therefore $p_{1}(f)=$ $f+J \overline{B}[N]=a_{1}+J \overline{B}[N]$ and $p_{2}(f)=f+\mathfrak{p}_{1} \overline{B}[N]=a_{2}+\mathfrak{p}_{1} \overline{B}[N]$, for some $a_{1}, a_{2} \in \overline{A}[M]$. These must have the same image when mapped into the lower right corner of the right hand Milnor square so we must have $a_{1}+\left(J \cup p_{1}\right) \overline{B}[N]=a_{2}+\left(J \cup \mathfrak{p}_{1}\right) \overline{B}[N]$. Now we have $a_{1}-a_{2} \in \overline{A}[M]$ and $a_{1}-a_{2} \in\left(J \cup \mathfrak{p}_{1}\right) \overline{B}[N]$. But $\overline{A}[M] \cap\left(J \cup \mathfrak{p}_{1}\right) \overline{B}[N]=\left(\left(J \cup \mathfrak{p}_{1}\right) \cap M\right) \overline{A}[M]$ so $a_{1}+\left(\left(J \cup \mathfrak{p}_{1}\right) \cap M\right) \overline{A}[M]=a_{2}+\left(\left(J \cup \mathfrak{p}_{1}\right) \cap M\right) \overline{A}[M]$. This means that $a_{1}+(J \cap M) \overline{A}[M] \in \frac{\overline{A}[M]}{(J \cap M) \overline{A}[M]}$ and $a_{2}+\left(\mathfrak{p}_{1} \cap M\right) \overline{A}[M] \in \frac{\overline{A}[M]}{\left(\mathfrak{p}_{1} \cap M\right) \overline{A}[M]}$ have the same image in the lower right hand corner of the left Milnor square. Therefore they patch in the left hand Milnor square to $g=$ $a+(I \cap M) \overline{A}[M]$ such that $p_{1 \mid}(g)=a+(J \cap M) \overline{A}[M]=a_{1}+(J \cap M) \overline{A}[M]$ and $p_{2 \mid}(g)=a+\left(\mathfrak{p}_{1} \cap M\right) \overline{A}[M]=a_{2}+\left(\mathfrak{p}_{1} \cap M\right) \overline{A}[M]$. Now we use the inclusions to map this pull back situation to the right Milnor square. This yields $p_{1} i_{I}(g)=i_{J} p_{1\mid}(g)=i_{J}\left(a_{1}+(J \cap M) \overline{A}[M]\right)=a_{1}+J \overline{B}[N]=$ $p_{1}(f)$ and $p_{2} i_{I}(g)=i_{\mathfrak{p}_{1}} p_{2 \mid}(g)=i_{\mathfrak{p}_{1}}\left(a_{2}+\left(\mathfrak{p}_{1} \cap M\right) \overline{A}[M]\right)=a_{2}+\mathfrak{p}_{1} \overline{B}[N]=$ $p_{2}(f)$. By the unique pullback property of the right Milnor square we have $i_{I}(g)=f$ which implies that $f \in \frac{\overline{A}[M]}{(I \cap M) \overline{A}[M]}$ completing the proof of the lemma.
\end{proof}
\vspace{.3cm}
\par
 \textbf{Proof of (d)(iii):} Let us look at the commutative diagram 
  \[
 \xymatrix{
  \CI (A, B)\ar@{->}[r]^(.36){\phi_{1}}\ar@{->}[d]^{\phi_{2}} & \CI (A/\Nil(A),  B/\Nil(B)) \ar@{->}[d]^{\phi_{3}}\\
   \CI \left(\frac{A[M]}{(I\cap M)A[M]}, \frac{B[N]}{IB[N]}\right)\ar@{->}[r]^{\phi_{4}} & \CI \left(\frac{\bar{A}[M]}{(I\cap M)\bar{A}[M]},\frac{\bar{B}[N]}{I\bar{B}[N]}\right)
   }
\]
 where $\phi_{i}$ are natural maps, and $\bar{A}=A/\Nil(A) $, $\bar{B}=B/\Nil(B)$.
By assumption, we are given that
$\phi_2: \CI(A, B) \cong \CI\left( \frac{A[M]}{(I \cap M)A[M]}, \frac{B[N]}{IB[N]} \right)$,
and since $N$ is affine, from part~(d)(ii), we get $\frac{A[M]}{(I\cap M)A[M]}$ is subintegrally closed in $\frac{B[N]}{IB[N]}$. By Lemma~\ref{Lm_Bar}, it follows that  $\frac{\overline{A}[M]}{(I \cap M)\overline{A}[M]}$ is subintegrally closed in $\frac{\overline{B}[N]}{I\overline{B}[N]}$.
Therefore, by applying part~(a), we conclude that the map
\[
\phi_3: \CI(\overline{A}, \overline{B}) \longrightarrow \CI\left( \frac{\overline{A}[M]}{(I \cap M)\overline{A}[M]}, \frac{\overline{B}[N]}{I\overline{B}[N]} \right)
\]
is an isomorphism.
As we have seen in the proof of Lemma~\ref{Lm_Bar} $\Nil(A)=\Nil(B)$, by \cite[Proposition 2.6]{Roberts and Singh} we get that $\phi_{1}$ is an isomorphism.
Therefore, $\phi_{2}$ is an isomorphism if and only if $\phi_{4}$ is an isomorphism.
By \cite[Proposition 2.7]{Roberts and Singh},  $\phi_{4}$ is an isomorphism if and only if
 \[\ker(\phi_{4}) = \frac{1+\Nil\left(\frac{B[N]}{IB[N]}\right)}{1+\Nil\left(\frac{A[M]}{(I\cap M)A[M]}\right)}.\]
By Theorem \ref{Nil-Thm}, we get 

 \[\frac{1+\Nil\left(\frac{B[N]}{IB[N]}\right)}{1+\Nil\left(\frac{A[M]}{(I\cap M)A[M]}\right)} = \frac{1+\frac{\Nil(B)[N]}{I\Nil(B)[N]}}{1+\frac{\Nil(B)[M]}{(I\cap M)\Nil(B)[M]}}\]
and by  Lemma \ref{Ker-Main Theorem} 
 
 \[\frac{1+\frac{\Nil(B)[N]}{I\Nil(B)[N]}}{1+\frac{\Nil(B)[M]}{(I\cap M)\Nil(B)[M]}}\]
is a trivial group if and only if $B$ is reduced or $M=N$.
Thus $\phi_4$ is an isomorphism if and only if $B$ is reduced or $M=N$.
Hence $\phi_2$ is an isomorphism implies either $B$ is reduced or $M=N$.
 $\hfill \gj$
\end{proof}

\begin{corollary}
 Let $A\subset B$ be an extension of rings, and $I$ an ideal in $B[X_1,\ldots,X_n]$ generated
 by monomials such that $I=\sqrt{I}$. Then $A$ is subintegrally closed in $B$ if and only if 
 $\CI(A,B)\cong \CI(A[X_1,\ldots,X_n]/I,B[X_1,\ldots,X_n]/I)$.
\end{corollary}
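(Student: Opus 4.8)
The plan is to deduce the corollary from Theorem~\ref{MainTheorem-1} by specializing to the case $M=N=\mathbb{Z}_+^n$, the free commutative monoid on $n$ generators. Writing $\mathbb{Z}_+^n$ in exponential notation as in the Remark following Definition~\ref{Weakly subintegral extensions}, we have $A[\mathbb{Z}_+^n]=A[X_1,\ldots,X_n]$ and $B[\mathbb{Z}_+^n]=B[X_1,\ldots,X_n]$, and $\mathbb{Z}_+^n$ is an affine positive monoid. The essential point is to translate the monomial ideal $I$ of $B[X_1,\ldots,X_n]$ into a radical ideal of the monoid $\mathbb{Z}_+^n$.

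First I would set up this dictionary. Let $N=\mathbb{Z}_+^n$ and put $I':=I\cap N$, the set of monomials contained in $I$. If $I$ is generated by monomials $m_1,\ldots,m_r\in N$, then the standard fact that a monomial lies in a monomial ideal if and only if it is divisible by one of the generators gives $I'=\bigcup_{i=1}^r m_i N$, which is an ideal of the monoid $N$, and $I=I'B[N]$, while $I\cap A[X_1,\ldots,X_n]=I'A[N]$. Moreover $\sqrt{I}$ is again a monomial ideal and $\sqrt{I}\cap N=\Rad(I')$, so the hypothesis $I=\sqrt{I}$ is equivalent to $I'=\Rad(I')$, i.e.\ $I'$ is a radical ideal of $N$ in the sense of Definition~\ref{sec-2:monoid}(\ref{Monoid-Item6}). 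Under these identifications the ring $A[X_1,\ldots,X_n]/I$ appearing in the statement means $\frac{A[N]}{I'A[N]}=\frac{A[N]}{(I'\cap N)A[N]}$, a subring of $\frac{B[N]}{I'B[N]}=\frac{B[N]}{IB[N]}=B[X_1,\ldots,X_n]/I$; and since $M=N$ we have $I'\cap M=I'$.

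For the forward implication, suppose $A$ is subintegrally closed in $B$. Since $M=N$ and $I'$ is a radical ideal of the affine monoid $N$, Theorem~\ref{MainTheorem-1}(c) yields $\CI(A,B)\cong \CI\!\left(\frac{A[N]}{I'A[N]},\frac{B[N]}{I'B[N]}\right)$, which is precisely the asserted isomorphism. For the converse, suppose $\CI(A,B)\cong \CI\!\left(\frac{A[N]}{I'A[N]},\frac{B[N]}{IB[N]}\right)$; since $N=\mathbb{Z}_+^n$ is affine, Theorem~\ref{MainTheorem-1}(d)(i) applies and gives that $A$ is subintegrally closed in $B$. This completes the argument.

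I do not expect a genuine obstacle here; the only point that requires care is the bookkeeping of the previous paragraph, namely that a radical monomial ideal of $B[X_1,\ldots,X_n]$ is exactly the extension to $B[N]$ of a radical ideal $I'$ of the monoid $N=\mathbb{Z}_+^n$, that $I\cap A[X_1,\ldots,X_n]$ is the corresponding monomial ideal of $A[X_1,\ldots,X_n]$, and that $N$ is an affine positive monoid, so that both sides of the claimed isomorphism are genuinely instances of the groups occurring in Theorem~\ref{MainTheorem-1}(c) and (d).
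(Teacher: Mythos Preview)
Your proposal is correct and follows exactly the same approach as the paper: specialize Theorem~\ref{MainTheorem-1} to $M=N=\mathbb{Z}_+^n$ and invoke part~(c) for the forward direction and part~(d)(i) for the converse. The paper's proof is a one-line reference to these parts, whereas you have spelled out the dictionary between radical monomial ideals of $B[X_1,\ldots,X_n]$ and radical ideals of the monoid $\mathbb{Z}_+^n$, which is the only point requiring justification.
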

\begin{proof}
    Follows from part (c) and (d) (i) of Theorem \ref{MainTheorem-1} by taking  $M=N=\mathbb{N}^{n}$.
    $\hfill \gj$
\end{proof}

\vspace{.5cm}

\subsection{A result on non-subintegrally closed extension of monoid algebras}

Let $A\subset B$ be an extension of rings and let $M$ be a positive monoid. Also let $^+\!\!A$ be the subintegral closure of $A$ in $B$. The canonical map $\theta(A,B):\CI(A,B)\longrightarrow \CI(A[M],B[M])$ is given by $\theta(J)=JA[M]$ and is a group homomorphism. Corresponding to the inclusions  $A\subset ^+\!\!A\subset B$, we get the inclusion map $i: \CI(A,^+\!\!A)\longrightarrow \CI(A,B)$ given by $J\longmapsto J$ and the map $\phi(A,^+\!\!A,B) : \CI(A,B)\longrightarrow \CI(^+\!\!A,B)$ is given by $J\longmapsto J^+\!\!A$. For a radical ideal $I$ in $M$ and looking at bases we have inclusions of rings
$$
\frac{A[M]}{I A[M]} \subset \frac{{ }^{+} A[M]}{I^{+} A[M]} \subset \frac{B[M]}{I B[M]}.
$$
Thus we have maps in the bottom row of the diagram in the next theorem. 
 
 \begin{theorem}\label{diagram-thm}
 Let $A\subset B$ be an extension of rings and let $^+\!\!A$ denote the subintegral closure of $A$ in $B$. Assume $M$ is an affine positive monoid, and $I$ is a radical ideal in $M$. Then\\
 (i) the following diagram 
 
 \[
\begin{tikzcd}
1 \arrow[r] & \CI(A,^+\!\!A) \arrow[r, "i"] \arrow[d, "{\theta(A,^+\!\!A)}"'] & \CI(A,B) \arrow[r, "{\phi(A,^+\!\!A,B)}"] \arrow[d, "{\theta(A,B)}"'] & \CI(^+\!\!A,B) \arrow[r] \arrow[d, "{       \theta(^+\!\!A,B)}" '] & 1 \\
1 \arrow[r] & \CI\left(\frac{A[M]}{IA[M]},\frac{^+\!\!A[M]}{I^+\!\!A[M]}\right) \arrow[r]                                    & \CI\left(\frac{A[M]}{IA[M]},\frac{B[M]}{IB[M]}\right) \arrow[r, "\phi'"']                                      & \CI\left(\frac{^+\!\!A[M]}{I^+\!\!A[M]},\frac{B[M]}{IB[M]}\right) \arrow[r]                               & 1
\end{tikzcd}
\]
is commutative with exact rows, where $\phi' = \phi\left(\frac{A[M]}{IA[M]},\frac{^+\!\!A[M]}{I^+\!\!A[M]},\frac{B[M]}{IB[M]}\right)$ is defined above.\\
(ii) If $\mathbb{Q}\subset A$, then $\CI\left(\frac{A[M]}{IA[M]},\frac{^+\!\!A[M]}{I^+\!\!A[M]}\right)\cong \frac{\mathbb{Z}[M]}{I\mathbb{Z}[M]}\otimes_{\mathbb{Z}}\CI(A,^+\!\!A).$
 \end{theorem}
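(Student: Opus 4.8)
The plan is to handle the two parts in turn. Part (i) is, at bottom, the exact sequence of \cite{Roberts and Singh} attached to a subintegral closure, applied twice and glued by functoriality of $\CI$; part (ii) is a direct computation resting on the Roberts--Singh isomorphism $B/A\xrightarrow{\sim}\CI(A,B)$ for subintegral extensions of $\mathbb{Q}$-algebras.

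For (i), commutativity of both squares is automatic: every arrow in the diagram is the image under the functor $\CI\colon Ring_{ext}\to\textbf{AbGr}$ of an evident morphism of ring extensions (the inclusions $A\subseteq{}^{+}\!\!A\subseteq B$, together with the base-change morphism $(R\subseteq S)\mapsto\bigl(\tfrac{R[M]}{IR[M]}\subseteq\tfrac{S[M]}{IS[M]}\bigr)$), the two relevant squares of ring extensions commute on the nose, and $\CI$ is a functor. For exactness of the top row one invokes the exact sequence $1\to\CI(A,{}^{+}\!\!A)\xrightarrow{i}\CI(A,B)\xrightarrow{\phi(A,{}^{+}\!\!A,B)}\CI({}^{+}\!\!A,B)\to 1$ from \cite{Roberts and Singh}: here $i$ is the inclusion of modules, left exactness at $\CI(A,B)$ reflects the general fact that for a tower $R\subseteq S\subseteq T$ an invertible $R$-submodule $J$ of $T$ satisfies $JS=S$ if and only if $J\subseteq S$, i.e.\ if and only if $J\in\CI(R,S)$, and surjectivity of $\phi$ is the additional content supplied by \cite{Roberts and Singh}. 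For the bottom row we apply the same sequence to $R=\frac{A[M]}{IA[M]}\subseteq T=\frac{B[M]}{IB[M]}$; the only thing to check is that its subintegral closure $R^{s}_{T}$ coincides with $\frac{{}^{+}\!\!A[M]}{I\,{}^{+}\!\!A[M]}$, after which both rows are exact and (i) follows.

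That identification is the main obstacle, since $B$ is not assumed reduced while Corollary \ref{Lm4Coro} applies only to reduced rings; I would establish it in two steps. \emph{(a)} The extension $\frac{A[M]}{IA[M]}\subseteq\frac{{}^{+}\!\!A[M]}{I\,{}^{+}\!\!A[M]}$ is subintegral: writing $A\subseteq{}^{+}\!\!A$ as a filtered union of elementary subintegral extensions $R'\subseteq R'[b]$ with $b^{2},b^{3}\in R'$, each step base-changes along $R'[M]\twoheadrightarrow R'[M]/IR'[M]$ to an elementary subintegral extension, because the images of $b^{2},b^{3}$ lie in the image of $R'[M]$. \emph{(b)} $\frac{{}^{+}\!\!A[M]}{I\,{}^{+}\!\!A[M]}$ is subintegrally closed in $\frac{B[M]}{IB[M]}$: since $^{+}\!\!A$ is subintegrally closed in $B$, the decreasing-nilpotency argument used in the proof of Lemma \ref{Lm_Bar} gives $\Nil({}^{+}\!\!A)=\Nil(B)$ (hence $\Nil(B)\subseteq{}^{+}\!\!A$) and shows that $\overline{A}:={}^{+}\!\!A/\Nil(B)$ is subintegrally closed in the reduced ring $\overline{B}:=B/\Nil(B)$; by Lemma \ref{Lm3} (the $M=N$ case) $\frac{\overline{A}[M]}{I\overline{A}[M]}$ is then subintegrally closed in $\frac{\overline{B}[M]}{I\overline{B}[M]}$, and a diagram chase with the nilradicals, using $\Nil(B)\subseteq{}^{+}\!\!A$ so that the coefficients of any lift land in $^{+}\!\!A$, promotes this reduced statement to $\frac{{}^{+}\!\!A[M]}{I\,{}^{+}\!\!A[M]}\subseteq\frac{B[M]}{IB[M]}$. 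Together, \emph{(a)} and \emph{(b)} say precisely that $R^{s}_{T}=\frac{{}^{+}\!\!A[M]}{I\,{}^{+}\!\!A[M]}$.

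For (ii), since $\mathbb{Q}\subseteq A$, step \emph{(a)} above exhibits $\frac{A[M]}{IA[M]}\subseteq\frac{{}^{+}\!\!A[M]}{I\,{}^{+}\!\!A[M]}$ as a subintegral extension of $\mathbb{Q}$-algebras, so the Roberts--Singh isomorphism gives $\CI\!\bigl(\tfrac{A[M]}{IA[M]},\tfrac{{}^{+}\!\!A[M]}{I\,{}^{+}\!\!A[M]}\bigr)\cong{}^{+}\!\!A[M]/\bigl(A[M]+I\,{}^{+}\!\!A[M]\bigr)$, since $A[M]/IA[M]$ sits inside $\frac{{}^{+}\!\!A[M]}{I\,{}^{+}\!\!A[M]}$ as $(A[M]+I\,{}^{+}\!\!A[M])/I\,{}^{+}\!\!A[M]$. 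Because $I$ is a monoid ideal, $A[M]+I\,{}^{+}\!\!A[M]=\bigl(\bigoplus_{m\notin I}A\cdot m\bigr)\oplus\bigl(\bigoplus_{m\in I}{}^{+}\!\!A\cdot m\bigr)$, so this quotient equals $\bigoplus_{m\in M\setminus I}({}^{+}\!\!A/A)\cdot m$, which is $\frac{({}^{+}\!\!A/A)[M]}{I({}^{+}\!\!A/A)[M]}$; and for any abelian group $G$ one has $\frac{G[M]}{IG[M]}\cong\frac{\mathbb{Z}[M]}{I\mathbb{Z}[M]}\otimes_{\mathbb{Z}}G$, since $\frac{\mathbb{Z}[M]}{I\mathbb{Z}[M]}$ is free abelian on $M\setminus I$. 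Applying the Roberts--Singh isomorphism once more to $A\subseteq{}^{+}\!\!A$ gives $\CI(A,{}^{+}\!\!A)\cong{}^{+}\!\!A/A$, and taking $G={}^{+}\!\!A/A$ and chaining the displayed isomorphisms yields $\CI\!\bigl(\tfrac{A[M]}{IA[M]},\tfrac{{}^{+}\!\!A[M]}{I\,{}^{+}\!\!A[M]}\bigr)\cong\frac{\mathbb{Z}[M]}{I\mathbb{Z}[M]}\otimes_{\mathbb{Z}}\CI(A,{}^{+}\!\!A)$, as claimed.
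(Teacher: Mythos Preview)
Your argument is correct, and for part (ii) it is essentially the paper's own computation. For part (i), however, you take a genuinely different route.

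The paper does \emph{not} identify $\frac{{}^{+}\!A[M]}{I\,{}^{+}\!A[M]}$ as the subintegral closure of $\frac{A[M]}{IA[M]}$ in $\frac{B[M]}{IB[M]}$. Instead, left exactness of both rows is taken from \cite[Section~3]{Singh} (valid for any intermediate ring, not just the subintegral closure), and surjectivity of the bottom $\phi'$ is obtained indirectly: since ${}^{+}\!A$ is subintegrally closed in $B$, Theorem~\ref{MainTheorem-1}(c) makes $\theta({}^{+}\!A,B)$ an isomorphism, and then commutativity of the right square transfers surjectivity from the top $\phi$ (which the paper takes from \cite[Proposition~4.1]{Sadhu2015}, not from \cite{Roberts and Singh} as you write) down to $\phi'$. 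This bypasses your step~(b) entirely.

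Your route, by contrast, proves step~(b) directly via the nilradical reduction, which is sound and in fact amounts to re-deriving the special case of Theorem~\ref{MainTheorem-1}(c) needed here. The trade-off: the paper's argument is shorter but leans on the main theorem of the section, whereas yours is more self-contained. Your step~(a) is also cleaner than the paper's treatment of the same point in part~(ii), where the paper cites Lemma~\ref{Lm3} (a statement about subintegral \emph{closedness}) to justify that the quotient extension is \emph{subintegral}; your filtered-union argument is the correct justification. One small correction: the surjectivity of $\phi(A,{}^{+}\!A,B)$ for $C={}^{+}\!A$ is not in \cite{Roberts and Singh}; the right reference is \cite[Proposition~4.1]{Sadhu2015}.
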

 
\begin{proof}
$(i)$ Since the maps are natural, the diagram is commutative. 
By \cite[Section 3]{Singh} the rows are exact except at the right. Now we have only to prove that the maps $\phi(A,^+\!\!A,B)$ and $\phi'$ are surjective. Since $^+A$ is subintegrally closed in $B$, the map $\theta(^+\!\!A,B)$ is an isomorphism by Theorem \ref{MainTheorem-1}(c). By using commutativity of the diagram, it is enough to show that $\phi(A,^+\!\!A,B)$ is surjective. But this follows from \cite[Proposition 4.1] {Sadhu2015} by taking $C=^+\!\!A$ (also see \cite{Isc89} for similar results in Picard groups).
\par
$(ii)$ In  \cite{Roberts and Singh} a natural map $\xi_{B / A}: B / A \rightarrow \mathcal{I}(A, B)$ is defined, and is an isomorphism if $A \subset B$ is a subintegral extension of $\mathbb{Q}$-algebras (see  \cite[Main Theorem 5.6]{Roberts and Singh} and \cite[Theorem 2.3]{Reid Roberts Singh}).
 Hence by \cite[Lemma 5.3]{Roberts and Singh}, we have a commutative diagram
\[
\begin{tikzcd}
\frac{^+\!\!A}{A} \arrow[r, "\xi_{^+\!\!A/A}"] \arrow[d, "j"'] & \CI(A,^+\!\!A) \arrow[d, "{\theta(A,^+\!\!A)}"] \\
\frac{^+\!\!A[M]}{I^+\!\!A[M]}/\frac{A[M]}{IA[M]} \arrow[r, "\xi"]                         & \CI\left(\frac{A[M]}{IA[M]},\frac{^+\!\!A[M]}{I^+\!\!A[M]}\right),                             
\end{tikzcd}
\]
where $\xi:= \xi_{\frac{^+\!\!A[M]}{I^+\!\!A[M]}/\frac{A[M]}{IA[M]}}$. Since $A\subset ^+\!\!\!A$ is a subintegral extension, by Lemma \ref{Lm3}, $\frac{A[M]}{IA[M]}\subset \frac{^+\!\!A[M]}{I^+\!\!A[M]}$ is also a subintegral extension.
Thus $\xi_{^+\!\!A/A}$ and $\xi$ are both isomorphisms.
Therefore, we have
$\CI\left(\frac{A[M]}{IA[M]},\frac{^+\!\!A[M]}{I^+\!\!A[M]}\right)
\cong \frac{^+\!\!A[M]/I^+\!\!A[M]}{A[M]/IA[M]}$.


Also we have 
  \[
 \begin{split}
\frac{\mathbb{Z}[M]}{I\mathbb{Z}[M]}\otimes_{\mathbb{Z}}\CI(^+\!\!A,A) &\cong \frac{\mathbb{Z}[M]}{I\mathbb{Z}[M]}\otimes_{\mathbb{Z}}\frac{^+\!\!A}{A}\\ 
  &\cong \frac{\frac{^+\!\!A}{A}[M]}{I(\frac{^+\!\!A}{A}[M])}  \\
   &\cong \frac{^+\!\!A[M]/A[M]}{I\frac{^+\!\!A[M]}{A[M]}} \\
   &\cong \frac{^+\!\!A[M]/A[M]}{I^+\!\!A[M]/IA[M]} \ \ (\text{By Remark} \ \ref{Iso}(1))\\
   &\cong \frac{^+\!\!A[M]/I^+\!\!A[M]}{A[M]/IA[M]}\  \
   (\text{By Remark} \ \ref{Iso}(2))\\
   &\cong \CI\left(\frac{A[M]}{IA[M]},\frac{^+\!\!A[M]}{I^+\!\!A[M]}\right).
 \end{split}
 \]
Hence, we get $\CI\left(\frac{A[M]}{IA[M]},\frac{^+\!\!A[M]}{I^+\!\!A[M]}\right)
\cong\frac{\mathbb{Z}[M]}{I\mathbb{Z}[M]}\otimes_{\mathbb{Z}}\CI(^+\!\!A,A)$.
$\hfill \gj$
\end{proof} 
\begin{remark}\label{Iso}
$(1)$  Note that the surjection $I^+\!\!A[M]
  \twoheadrightarrow{I\frac{^+\!\!A[M]}{A[M]}}$ with kernel $IA[M]$ induces an isomorphism $\frac{I^+\!\!A[M]}{IA[M]}\cong{I\frac{^+\!\!A[M]}{A[M]}}$.

  $(2)$ Note that $\frac{^+\!\!A[M]/A[M]}{I^+\!\!A[M]/IA[M]} \cong \frac{ ^+\!\!A[M]}{(A[M]+I^+\!\!A[M])}$
  and also $ \frac{^+\!\!A[M]/I^+\!\!A[M]}{A[M]/IA[M]} \cong \frac{ ^+\!\!A[M]}{(A[M]+I^+\!\!A[M])}.$ Both are isomorphic as an abelian group.
\end{remark}

\vspace{0.5cm}

\section{Acknowledgment} 
Md Abu Raihan acknowledges the financial support (file no. 09/081(1388)/2019-EMR-I) of the Council of Scientific and Industrial Research (C.S.I.R.), Government of India. H.~P. Sarwar would like to thank SRIC, IIT Kharagpur for the ISIRD project (Project code: AKA) and S.E.R.B, DST, India for the project SRG/2020/000272.



\textsc{Md Abu Raihan, Department of Mathematics, Indian Institute of Technology Kharagpur, Kharagpur-721302, West Bengal, India }\\
 Email address: \textbf{aburaihan908@gmail.com}\\

 \textsc{Leslie G. Roberts, Department of Mathematics and Statistics, Queen's University, Kingston, Ontario, Canada, K7L 3N6}\\
 Email address: \textbf{robertsl@queensu.ca}\\

\textsc{Husney Parvez Sarwar, Department of Mathematics, Indian Institute of Technology Kharagpur, Kharagpur-721302, West Bengal, India }\\
 Email address: \textbf{parvez@maths.iitkgp.ac.in}; \textbf{mathparvez@gmail.com}

 \end{document}